\pgfplotsset{compat=1.16}
\newcommand\emaillink[1]{\href{mailto:#1}{\nolinkurl{#1}}}
\newtheorem{theorem}{Theorem}[section]
\newtheorem{lemma}[theorem]{Lemma}
\newtheorem{prop}[theorem]{Proposition}
\newtheorem{claim}[theorem]{Claim}
\newtheorem{conjecture}[theorem]{Conjecture}
\theoremstyle{definition}
\numberwithin{equation}{section}
\newcommand\floor[1]{\left\lfloor #1 \right\rfloor}
\DeclareMathOperator{\sgn}{sgn}
\DeclareMathOperator{\Geom}{Geom}
\DeclareMathOperator{\Exp}{Exp}
\DeclareMathOperator{\Unif}{Uniform}
\newcommand\eps{\varepsilon}
\newcommand\N{\mathbb{N}}
\newcommand\Z{\mathbb{Z}}
\newcommand\R{\mathbb{R}}
\newcommand\E{\mathbb{E}}
\newcommand\Prb{\mathbb{P}}
\newcommand\one{\mathbbm{1}}
\newcommand\bigmid{\mathrel{\Big|}}
\newcommand\cY{\mathcal{Y}}
\newcommand\cF{\mathcal{F}}
\newcommand\event{E}
\newcommand\lw{Y} 
\newcommand\uw{\hat{Y}} 
\newcommand\lws{L} 
\newcommand\uws{B} 
\newcommand\lb{Y^{br}} 
\newcommand\ub{\hat{Y}^{br}} 
\newcommand\lwa{A} 
\newcommand\uwa{\hat{A}} 
\newcommand\lba{A^{br}} 
\newcommand\lwae{X} 
\newcommand\lbae{X^{br}} 
\newcommand\pbae{\tilde{X}^{br}} 
\newcommand\lwac{S} 
\newcommand\lbac{S^{br}} 
\newcommand\pbac{\tilde{S}^{br}} 
\newcommand\adjlw{\bar{Y}} 
\newcommand\adjlwa{\bar{A}} 
\newcommand\lmgf{f} 
\newcommand\probareazero[1]{\rho_{#1}}
\newcommand\probareapositive[1]{p_{#1}}
\newcommand\probbridgeareapositive[1]{q_{#1}}
\newcommand\cdeg{c_{\textup{deg}}}
\definecolor{applegreen}{rgb}{0.55, 0.71, 0.0}
\title{Counting graphic sequences via integrated random walks}
\author{Paul Balister\footnote{Mathematical Institute, University of Oxford, Oxford OX2\thinspace6GG, UK.
Email: \emaillink{paul.balister@maths.ox.ac.uk}. Partially supported by EPSRC grant EP/W015404/1.}\quad
Serte Donderwinkel\footnote{Bernoulli Institute, University of Groningen, Nijenborgh~9, 9747 AG Groningen, Netherlands and CogniGron (Groningen Cognitive Systems and Materials Center), University of Groningen, Nijenborgh~4, 9747 AG Groningen, Netherlands, Email: \emaillink{s.a.donderwinkel@rug.nl}.
}\quad
Carla Groenland\footnote{Delft Institute of Applied Mathematics, TU Delft, The Netherlands. Email:
\emaillink{c.e.groenland@tudelft.nl}. Part of her work was supported by the Marie Skłodowska-Curie grant
GRAPHCOSY (number 101063180) hosted at Utrecht University.}\\
Tom Johnston\footnote{School of Mathematics, University of Bristol, Bristol, BS8\thinspace1UG, UK
and Heilbronn Institute for Mathematical Research, Bristol, UK. Email: \emaillink{tom.johnston@bristol.ac.uk}.}\quad
Alex Scott\footnote{Mathematical Institute, University of Oxford, Oxford OX2\thinspace6GG, UK.
Email: \emaillink{scott@maths.ox.ac.uk}. Research supported by EPSRC grant EP/X013642/1.}}
\date{\today}
\begin{document}

\maketitle

\begin{abstract}
Given an integer $n$, let $G(n)$ be the number of integer sequences $n-1\ge d_1\ge d_2\ge\dotsb\ge d_n\ge 0$ that are the degree sequence of some graph. We show that $G(n)=(c+o(1))4^n/n^{3/4}$ for some constant $c>0$, improving both the previously best upper and lower bounds by a factor of $n^{1/4}$ (up to polylog-factors).   

Additionally, we answer a question of Royle, extend the values of $n$ for which $G(n)$ is known exactly from $n \leq 290$ to $n \leq 1651$ and determine the asymptotic probability that the integral of a (lazy) simple symmetric random walk bridge remains non-negative.
\end{abstract}

\section{Introduction}
Given a graph $G$ and a vertex $v\in V(G)$, the \emph{degree} of $v$ is the number of edges incident to~$v$,
and the \emph{degree sequence} of $G$ is the non-increasing sequence of its vertex degrees. We consider the
following very natural question: over all graphs on $n$ vertices, how many different degree sequences are there? 

Since the degree of a vertex is at most $n-1$ and at least~$0$, a simple upper bound follows by bounding the
number of integer sequences $n-1\ge d_1\ge d_2\ge\dotsb\ge d_n\ge 0$. A `stars-and-bars' argument
shows that there are $\binom{2n-1}{n-1}=\Theta(4^n/\sqrt{n})$ such sequences, but not all of them
are degree sequences of graphs. 
Sequences which are the degree sequence of some graph are called \emph{graphic sequences}. A famous result of
Erd\H{o}s and Gallai~\cite{ErdosGallai} provides necessary and sufficient conditions for a sequence to be graphical
and various other characterisations are known~\cite{Hakimi,Havel}.

Let $G(n)$ be the number of graphic sequences of length $n$ (or equivalently the number of degree
sequences across graphs on $n$ vertices). The best known bounds on $G(n)$ were given by
Burns~\cite{Burns2007TheNO} who showed that 
\[
 \frac{c_1 4^n}{n}\le G(n)\le \frac{4^n}{\sqrt{n}\log^{c_2}n}
\]
for some constants $c_1,c_2>0$ and for all $n\in \N$. To the best of our knowledge, these were the best known
asymptotics before our work, and this has been explicitly mentioned as an open problem in several
computational papers (e.g.\ \cite{LuBressan11,Wang19}). 

Our main result pinpoints the asymptotics for $G(n)$.

\begin{theorem}\label{thm:main}
The number of graphic sequences of length $n$ is $G(n)=(\cdeg+o(1))4^n/n^{3/4}$,
 where $\cdeg>0$ is a constant.
\end{theorem}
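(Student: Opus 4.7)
The strategy is probabilistic. Since the number of monotone sequences $n-1\ge d_1\ge\dotsb\ge d_n\ge 0$ is $\binom{2n-1}{n-1}\sim 4^n/(2\sqrt{\pi n})$, the theorem asserts that a fraction $\Theta(n^{-1/4})$ of these are graphic, with an explicit constant. The exponent $1/4$ is the Sinai persistence exponent for integrated random walks, which suggests looking for a bijective encoding under which the Erd\H{o}s--Gallai condition becomes the non-negativity of the integral of an associated random walk bridge.

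The plan is to combine two reformulations. First, the set of monotone sequences in question is in bijection with lattice paths from $(0,0)$ to $(n{-}1,n)$ via the standard boundary-of-the-Young-diagram encoding, so that a uniformly random sequence corresponds to a uniformly random lattice bridge of length $2n-1$. Second, by Fulkerson's version of the Erd\H{o}s--Gallai theorem, $(d_i)$ is graphic iff $\sum_i d_i$ is even and
\[
\sum_{j=1}^{k}(d_j^*-d_j)\ge k \qquad\text{for every }1\le k\le s,
\]
where $(d_j^*)$ is the conjugate partition and $s$ is the side length of the Durfee square. Under the lattice bijection the left-hand side is, up to a linear correction, the cumulative area (integral) process of the walk encoding, so the Erd\H{o}s--Gallai inequalities become precisely the event that the integral of the bridge remains non-negative on a macroscopic initial segment.

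The remaining probabilistic input is a sharp asymptotic for the persistence probability of the integral of a lazy simple symmetric random walk bridge, which the paper establishes as one of its main auxiliary results (see the abstract). This yields $(c_{\textup{br}}+o(1))\,n^{-1/4}$ for this probability, for an explicit constant $c_{\textup{br}}>0$. Multiplying by $\binom{2n-1}{n-1}$ and by the asymptotic factor $\tfrac12$ arising from the parity constraint $\sum_i d_i\equiv 0\pmod 2$ yields $G(n)=(\cdeg+o(1))\,4^n/n^{3/4}$ with $\cdeg=c_{\textup{br}}/(4\sqrt{\pi})$.

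The hardest parts are the translation and the sharp persistence asymptotic. On the combinatorial side, the uniform lattice bridge is not literally a simple symmetric random walk bridge (the step counts are unbalanced by one), and Fulkerson's condition is indexed by the \emph{random} Durfee time $s$, so passing to a cleanly-indexed persistence event requires local CLT control together with a priori concentration of $s$ around its typical value (of order $n$). On the probabilistic side, identifying the constant $c_{\textup{br}}$ (rather than merely matching polynomial bounds of order $n^{-1/4}$) is a substantial undertaking in its own right, presumably via a refined invariance principle for the integrated bridge combined with Brownian bridge/excursion theory or Sparre Andersen-type identities.
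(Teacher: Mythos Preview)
Your high-level outline matches the paper's: encode sequences as lattice paths, use the Erd\H{o}s--Gallai/Fulkerson criterion, reduce to a persistence probability for an integrated walk, then multiply by $\binom{2n-1}{n-1}\sim 4^n/(2\sqrt{\pi n})$ and a parity factor $\tfrac12$. Where your proposal parts company with the paper, however, is precisely at the step you flag as ``hardest'': the translation.

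You treat the length-$(2n-1)$ lattice path itself as the bridge and assert that $\sum_{j\le k}(d_j^*-d_j)$ is ``up to a linear correction'' the integral of that bridge. This is not the case, and the paper does \emph{not} work with that path as the relevant walk. The indices $k$ in Fulkerson's inequalities run over columns (up to the Durfee side $\ell$), not over steps of the $(2n-1)$-step path, and $\sum_{j\le k}d_j$ and $\sum_{j\le k}d_j^*$ are areas of vertical and horizontal strips of the Young diagram, not partial sums or integrals of the boundary walk. The paper's key combinatorial move is to \emph{fold} the lattice path at the diagonal point $(\ell,\ell)$: the two halves become a pair $(W,W')$ of paths of common length~$n$, and the signed \emph{gap} process $\lw_i=\tfrac12\sum_{j\le i}(Z_j+Z_j')$ between them is a lazy SSRW of length $n-1$ conditioned on $\lw_{n-1}\in\{0,-1\}$. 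The integral $\lwa_k=\sum_{i\le k}\lw_i$ equals the signed area between $W$ and $W'$, and Fulkerson's inequalities become exactly $\lwa_1,\dots,\lwa_{n-1}\ge0$. This folding eliminates the random Durfee index entirely---there is no need for the ``concentration of $s$'' you anticipate---and it is what produces a \emph{lazy} walk of length $n-1$ rather than a simple walk of length $2n-1$.

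For the constant, your guess of Sparre Andersen-type arguments is in the right spirit but misses the mechanism. The paper conditions on the number $N_n$ of excursions of $\lb$, applies an exact exchangeability identity (Burns' lemma: for sign-symmetric exchangeable increments with a.s.\ distinct subset sums, the probability that all partial sums are $\ge0$ is exactly $(2N-1)!!/(2^NN!)$) to a slight \emph{perturbation} of the excursion-area walk, and then corrects for the perturbation by analysing the number $M_n$ of zeros of the unperturbed area process, which converges to a geometric law with parameter~$\rho$. This is how $\rho$ enters the constant $\cdeg=\Gamma(3/4)/(4\pi\sqrt{2(1-\rho)})$; it does not come from a Brownian invariance principle. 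Finally, the parity factor $\tfrac12$ is not automatic and requires a separate argument (a partial matching between even- and odd-sum sequences that respects the dominating condition), which your proposal does not supply.
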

This also answers in the affirmative a question of Royle~\cite{royle} who asked\footnote{\label{footnote:royle}Royle
actually asked whether $G'(n)/G'(n-1)\to 4$ where $G'(n)$ is the number of degree sequences
of graphs without isolated vertices. However $G'(n)=G(n)-G(n-1)$,
so this question is equivalent to the question of whether $G(n)/G(n-1)$ tends to~4.}
whether the ratio $G(n)/G(n-1)$ tends to~4.

\paragraph{The value of $\cdeg$ and a connection to random walks.}
We can express the value of the constant~$\cdeg$ in terms of the hitting probabilities of a particular random walk,
which arises from our proof strategy. In particular, computational estimates give 
$\cdeg\approx 0.099094$ (see Section~\ref{subsec:rho}).

We will prove Theorem~\ref{thm:main} by viewing a sequence $n-1\ge d_1\ge \dotsb \ge d_n\ge 0$
as a path on a grid from $(0,n)$ to $(n-1,0)$, as depicted in Figure~\ref{fig:reformulation}(a).
We then count the number of graphic sequences of length $n$ by sampling such a path uniformly at random,
and computing the probability that the path satisfies the conditions given by 
Erd\H{o}s and Gallai for a sequence to be graphical.

To be more precise, a sequence is graphical if and only if the sum of the degrees is even and it satisfies the dominating condition given in (\ref{eq:DC}). Our aim is to show that the  probability that a random sequence satisfies the dominating condition is asymptotically $4\sqrt{\pi}\cdeg n^{-1/4}$. Then, we show that, asymptotically, half of the sequences that satisfy the dominating condition have even sum, and the result then follows from the fact that we considered $\binom{2n-1}{n-1}\approx(2\sqrt{\pi})^{-1}4^n n^{-1/2}$ sequences in total.

Via a number of reformulations (see Section \ref{sec:reformulation}), the probability that a uniformly random sequence satisfies the dominating condition turns out to be the probability that a particular integrated random walk bridge stays non-negative.
Let $Y=(\lw_k)_{k\geq 0}$ be a random walk that has increments that take the value $1$ with probability $\frac14$, the value $-1$ with probability $\frac14$ and the value $0$ otherwise, so that $Y$ is a \emph{lazy simple symmetric random walk}. Let  $\lwa_k=\sum_{j=1}^k\lw_j$ be its area process. Our probability of interest is \emph{the probability that $\lwa_1,\dots,\lwa_{n-1}\geq 0$, conditional on the event that $\lw_{n-1}\in \{0,-1\}$}. To introduce its asymptotic value, we need some additional notation.
Let $\zeta_1=\inf\{k\geq 1 ~:~ \lw_k=0, \lwa_k\le 0\}$ be the first visit of $\lw$ to $0$ at which $(\lwa_k)_{k=1}^\infty$ hits $(-\infty,0]$ and let $\rho = \Prb(\lwa_{\zeta_1}=0)$.
We prove the following result about lazy simple symmetric random walk bridges which may be of independent interest.
\begin{prop}\label{prop:areanonneg}
 We have that
 \[
  n^{1/4}\Prb(\lwa_1,\dots,\lwa_{n}\ge 0\mid \lw_n=0) \to \frac{\Gamma(3/4)}{\sqrt{2\pi(1-\rho)}}
 \]
 as $n\to\infty$.
\end{prop}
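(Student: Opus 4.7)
The plan is to exploit two structural features of $(Y, A)$: a monotonicity property that reduces the event $\{A_k \ge 0 \text{ for all } k \le n\}$ to a condition at the zeros of $Y$, and the Markovian regeneration provided by the stopping times $\zeta_k$. The monotonicity observation is that, since $A_k - A_{k-1} = Y_k$, within any excursion of $Y$ between consecutive returns $T_{i-1}$ and $T_i$ the area $A$ is monotone (non-decreasing above $0$, non-increasing below). On the event $Y_n = 0$, so that $n = T_N$ is itself a return time, this forces $\{A_1, \ldots, A_n \ge 0\} = \{A_{T_i} \ge 0 \text{ for all } i \le N\}$. In particular, $\{\zeta_1 > n\} \cap \{Y_n = 0\} = \{A_k > 0 \text{ for } 1 \le k \le n\} \cap \{Y_n = 0\}$, and if $\zeta_1 \le n$ then for non-negativity to continue one must have $A_{\zeta_1} = 0$.

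Combining this with the strong Markov property at $\zeta_1$, the quantity $P(n) := \Prb(A_1, \ldots, A_n \ge 0, Y_n = 0)$ satisfies the renewal equation
\[
P(n) = v(n) + \sum_{t=1}^{n} u(t)\, P(n-t),
\]
where $v(n) = \Prb(\zeta_1 > n, Y_n = 0)$ and $u(t) = \Prb(\zeta_1 = t, A_{\zeta_1} = 0)$, with total mass $\sum_t u(t) = \rho < 1$. Passing to generating functions gives $\hat{P}(z) = \hat{v}(z)/(1 - \hat{u}(z))$, and since $\hat{u}(z) \to \rho$ as $z \to 1^-$, the Karamata Tauberian theorem reduces the problem to determining the tail of $v(n)$.

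The core analytic task is therefore to establish $v(n) \sim c_v\, n^{-3/4}$ with the correct constant. I would decompose $Y$ into its i.i.d.\ excursions, writing $\tau_i$ and $B_i$ for the length and signed area of the $i$th excursion and setting $S_i = B_1 + \cdots + B_i$. By the symmetry of each excursion about its axis the $B_i$ are symmetric, and their tails scale like $|B|^{-1/3}$, since excursion lengths satisfy $\Prb(\tau > t) \sim c\, t^{-1/2}$ and area scales like length$^{3/2}$. Under the conditioning $Y_n = 0$ the excursions form a bridge $\tau_1 + \cdots + \tau_N = n$ with $N$ of order $\sqrt{n}$, and the event $\{\zeta_1 > n, Y_n = 0\}$ rewrites as $\{\tau_1 + \cdots + \tau_N = n,\, S_1, \ldots, S_N > 0\}$. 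I would then combine a Sparre--Andersen or cycle-lemma estimate giving $\Prb(S_1, \ldots, S_N > 0)$ of order $N^{-1/2}$ for the symmetric walk $S$, with a local limit theorem for the joint law of $(N, \sum_{i \le N} \tau_i)$ under the bridge constraint, producing both the $n^{-3/4}$ order and (with careful bookkeeping) the precise leading constant.

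Assembling the pieces, $\hat{v}(z) \sim C(1-z)^{-1/4}$ transfers through the renewal equation to $P(n) \sim v(n)/(1-\rho)$, and dividing by the lazy-walk local CLT estimate $\Prb(Y_n = 0) \sim 1/\sqrt{\pi n}$ yields the limit claimed in the proposition; matching constants forces $v(n)\cdot n^{3/4} \to \Gamma(3/4)\sqrt{1-\rho}/(\pi\sqrt{2})$. The main obstacle is this asymptotic for $v(n)$: the walk $S$ has heavy-tailed, non--finite-variance symmetric increments coupled to the bridge constraint $\sum \tau_i = n$, so off-the-shelf persistence asymptotics do not apply. Locating the $\sqrt{1-\rho}$ factor---most naturally arising from the strict-versus-weak positivity discrepancy that $\rho$ controls---inside the joint scaling limit of $(S, N)$ is the most delicate part of the argument.
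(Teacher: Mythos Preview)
Your renewal decomposition is sound: with $P(n)=\Prb(A_1,\dots,A_n\ge 0,\,Y_n=0)$, $v(n)=\Prb(A_1,\dots,A_n>0,\,Y_n=0)$ and $u(t)=\Prb(\zeta_1=t,\,A_{\zeta_1}=0)$ one does get $P=v+u*P$, and since $\sum_t u(t)=\rho<1$ this leads (modulo a tail bound on $u$, essentially the paper's Lemma~\ref{lem:upperboundintegral0}) to $P(n)\sim v(n)/(1-\rho)$. But this only reshuffles the difficulty: you now need the \emph{strict} persistence asymptotic $v(n)\sim c_v\,n^{-3/4}$ with $c_v=\Gamma(3/4)\sqrt{1-\rho}/(\pi\sqrt{2})$, and your proposal does not supply a mechanism to produce that constant.

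The gap is precisely at the step where you invoke Sparre--Andersen/cycle-lemma for the excursion-area walk $S$. For exchangeable symmetric increments the cycle lemma gives $\Prb(S_1,\dots,S_N>0)=(2N-1)!!/(2^NN!)$ only when partial sums almost surely avoid~$0$; for integer-valued $B_i$ with $\Prb(B_i=0)=\tfrac12$ (the lazy step) this fails badly, and the naive value is off by exactly the missing factor $\sqrt{1-\rho}$. The paper handles this by a \emph{double} use of the exchangeable lemma (Lemma~\ref{lem:useful2}): first it perturbs the $B_i$ by tiny continuous noise, so the lemma applies with equality and yields $\E[(2N_n-1)!!/(2^{N_n}N_n!)]\sim n^{-1/4}\Gamma(3/4)/\sqrt{2\pi}$; then, conditionally on $\{S\ge 0\}$, it applies the lemma again to the perturbations summed over the blocks between successive zeros of $S$, giving $\Prb(\tilde S\ge 0\mid S\ge 0,\,M_n)=(2M_n-1)!!/(2^{M_n}M_n!)$. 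The analysis of $M_n$ (convergence to $\Geom(\rho)$, via Lemmas~\ref{lem:hittingtimeat0}--\ref{lem:pn_to_rho}) is exactly what converts the tie-free constant into the correct one, producing the factor $\E[(2G-1)!!/(2^GG!)]=\sqrt{1-\rho}$. Your outline contains no analogue of this second step; the renewal equation extracts a factor $(1-\rho)$ rather than $\sqrt{1-\rho}$, so a further $\sqrt{1-\rho}$ must be hiding inside $v(n)$, and ``careful bookkeeping'' with the joint local limit of $(N,\sum\tau_i)$ will not uncover it without a structural argument tying strict positivity of the discrete walk $S$ to the quantity~$\rho$.
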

The probability that a random process does not take negative values is also called the \emph{persistence probability}.
The persistence probability of integrated random processes was first studied by Sina\u{\i} \cite{Sinai1992} in 1992, who showed that the persistence probability of an $n$-step simple symmetric random walk (SSRW) is $\Theta(n^{-1/4})$. The sharp asymptotics (including the constant) follow from a result by Vysotsky \cite[Theorem 1]{Vysotsky2014}. His work on random walk bridges implies that the persistence probability of an $n$-step SSRW bridge is $\Theta(n^{-1/4})$ \cite[Proposition 1]{Vysotsky2014}. The sharp asymptotics for SSRW bridges are a natural next question, which we answer in Proposition \ref{prop:persistenceSSRW} for the SSRW bridge and in Proposition \ref{prop:areanonneg} for the lazy variant. 

We use Proposition \ref{prop:areanonneg} to show that $G(n)=(\cdeg+o(1))4^n/n^{3/4}$ for
\[
 \cdeg = \frac{\Gamma(3/4)}{4\pi\sqrt{2(1-\rho)}}.
\]
The probability generating function of the area of the first excursion of $\lw$ away from $0$ satisfies a recursive equation which allows us to estimate that
$\rho$ is approximately $0.5158026$, and plugging this into the equation gives $\cdeg\approx 0.099094$. 

A more direct expression for $\rho$ is given in a follow-up work by Bassan, the second author and Kolesnik \cite{Bassan24}. There it is shown that 
$\rho = 1 - e^{-\xi}$ where \[\xi=\sum_{n=1}^\infty \frac{2}{n^2 2^{2n}} \sum_{d\mid n} \binom{2d-1}{d} \phi(n/d)\]
with $\phi$ Euler's totient function. This alternative form can be used to confirm the approximations of $\rho$ and $\cdeg$ that we give above and in Section \ref{subsec:rho}.

\paragraph{Related counting problems.}
Much more is known about related counting problems, such as the number of graphs
with a given degree sequence \cite{BarvinokH13,McKay1985,McKay_Wormald90,McKay_Wormald91,Wormald19} 
and a variant of our problem where the sequence does not
need to be non-increasing (e.g.\ \cite{Stanley}).

The number $T(n)$ of out-degree sequences for $n$-vertex tournaments, also called score sequences,
has received particular interest, and the problem of determining $T(n)$ can be traced back to MacMahon in 1920 \cite{macmahon1920american}.  Following work of Moser \cite{Moser}, Erd\H{o}s and Moser (see \cite{moon1968topics}), and Kleitman \cite{kleitman1970number},
it was shown that $T(n)=\Theta(4^n/n^{5/2})$ by Winston and Kleitman~\cite{KleitmanWinston} (lower bound) and Kim and Pittel~\cite{KimPittel} (upper bound).
Recently, Kolesnik \cite{Kolesnik22} determined the exact asymptotics,
showing that there is a constant $c\approx 0.392$ such that $T(n)=(c+o(1))4^n/n^{5/2}$. 

Another well-studied variant is the fraction $p(N)$ of partitions of an integer $N$ that are graphical.
This corresponds to the variant of our problem where we fix the number of edges of the graph,
rather than the number of vertices.
In 1982, Wilf conjectured that $p(N)\to 0$ as $N\to \infty$.
Pittel \cite{Pittel99} resolved this problem in the affirmative using a Kolmogorov zero-one law,
and Erd\H{o}s and Richmond \cite{ErdosRichmond93} showed a lower bound of $p(N)\ge \pi/\sqrt{6N}$
for sufficiently large even~$N$. The best-known upper bound is $p(N)=O(N^{-\alpha})$ for $\alpha\approx 0.003$
from Melczer, Michelen and Mukherjee~\cite{MelczerMichelenMukherjee20}. 

\paragraph{Exact enumeration.}
We also give an improved algorithm for the exact enumeration of graphic sequences and calculate
the number of graphic sequences of length $n$ for all $n$ up to~1651. 

There is previous work on enumerating graphic sequences \cite{ivanyi2011,Ivanyi2012,Ivanyi2013,ruskey1994,Wang2019b},
and the numbers were known up to $n=118$~\cite{Wang19} as OEIS sequence \href{https://oeis.org/A004251}{A004251},
although the numbers $G'(n)$ of zero-free graphic sequences were known up to $n=290$~\cite{Wang2019b},
and these imply the values of $G(n)$ up to the same number by the observation in footnote~\ref{footnote:royle}.

\paragraph{Paper overview.}
In Section~\ref{sec:prelim}, we provide the reformulation of our problem in terms of integrated random walks.
In Section~\ref{sec:proof}, we first show that $G(n) = \Theta(4^n/n^{3/4})$, and then prove
Theorem~\ref{thm:main} and Proposition~\ref{prop:areanonneg}, up to technical lemmas that we postpone to Section~\ref{sec:technical}.
In Section~\ref{sec:comp} we introduce an improved algorithm for computing $G(n)$ exactly
for small $n$ and discuss computational results such as the approximation of $\rho$. We conclude with some open problems in
Section~\ref{sec:conclusion}. An overview of the notation used throughout the paper is given in Appendix \ref{sec:notation_overview}.

\section{Reformulation and notation}
\label{sec:prelim}
In order to prove our results, we need a suitable criterion for when a sequence of non-negative integers
is the degree sequence of a simple graph. 
For a given sequence of non-negative integers $d_1\ge d_2\ge\dotsb\ge d_n\ge 0$,
let $d_i'$ be the number of $j$ with $d_j\ge i$ and set
\begin{align*}
 s_i &= (n-1)-d_i,\\
 s_i'&= n-d_i'.
\end{align*}
Let $\ell$ be the largest $j$ such that $d_j\ge j$. Using these definitions we can now
give the version of the Erd\H{o}s--Gallai Theorem that we will use.
\begin{theorem}[Variant of Erd\H{o}s--Gallai]
\label{thm:EG}
 A sequence of integers $d_1\ge d_2\ge\dotsb\ge d_n\ge 0$ is the degree sequence of a
 simple graph if and only if the sum $\sum_{i=1}^n d_i$ is even and for all\/ $k\le\ell$
 \begin{equation}\label{eq:DC}
  \sum_{i=1}^k s_i \ge \sum_{i=1}^k s_i'.
 \end{equation}
\end{theorem}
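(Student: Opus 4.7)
The plan is to derive this variant from the classical Erd\H{o}s--Gallai theorem, which asserts that a non-increasing sequence $(d_i)$ of non-negative integers is graphical iff $\sum_i d_i$ is even and, for every $k\in\{1,\dots,n\}$,
\[
 \sum_{i=1}^k d_i \le k(k-1)+\sum_{i=k+1}^n \min(d_i,k). \qquad (\star_k)
\]
Two things have to be checked: that $(\star_k)$ is equivalent to the dominating inequality~(\ref{eq:DC}) when $k\le\ell$, and that $(\star_k)$ is automatically implied by $(\star_\ell)$ when $k>\ell$.

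For the equivalence in the range $k\le\ell$, I would use the standard conjugate-partition identity $\sum_{i=1}^n \min(d_i,k)=\sum_{j=1}^k d_j'$, together with the observation that for $i\le k\le\ell$ we have $d_i\ge d_\ell\ge\ell\ge k$, so $\sum_{i=1}^k\min(d_i,k)=k^2$. Substituting rewrites the right-hand side of $(\star_k)$ as $k(k-1)+\sum_{j=1}^k d_j'-k^2$, and $(\star_k)$ becomes $\sum_{j=1}^k d_j'\ge\sum_{i=1}^k d_i+k$. Plugging in $d_i=(n-1)-s_i$ and $d_j'=n-s_j'$ produces~(\ref{eq:DC}) verbatim.

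For the redundancy of $(\star_k)$ when $k>\ell$, I would set $g(k):=k(k-1)+\sum_{i=k+1}^n\min(d_i,k)-\sum_{i=1}^k d_i$ so that $(\star_k)$ reads $g(k)\ge 0$, and compute the one-step difference for $k\ge\ell+1$. In that range $d_k\le\ell\le k-1$ and the same bound holds for all $i>k$, so the $\min(d_i,\cdot)$-terms in the comparison between $g(k)$ and $g(k-1)$ collapse to plain $d_i$, yielding $g(k)-g(k-1)=2(k-1-d_k)\ge 0$. Hence $g$ is non-decreasing on $\{\ell,\ell+1,\dots,n\}$, and the case $(\star_\ell)$---which by the first step is exactly~(\ref{eq:DC}) at $k=\ell$---already forces $(\star_k)$ for every $k>\ell$. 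The one piece of genuine content is this monotonicity computation, which pins down why the Durfee cutoff $\ell=\max\{j:d_j\ge j\}$ is precisely the right one; the rest is bookkeeping with conjugate partitions, and the change of variables to $s_i,s_i'$ is engineered so that~(\ref{eq:DC}) has the partial-sum shape needed for the random-walk reformulation that follows.
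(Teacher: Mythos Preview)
Your proposal is correct and follows precisely the approach the paper indicates: the paper does not give a proof of Theorem~\ref{thm:EG} but simply remarks that ``this form follows from the classical statement of the Erd\H{o}s--Gallai Theorem by rearranging terms and observing that only the first $\ell$ conditions need be checked'' (citing \cite{hasselbarth1984verzweigtheit,nash-williamsValency,rousseau1995note,sierksma1991seven}). Your two steps---the conjugate-partition identity and change of variables giving the equivalence for $k\le\ell$, and the monotonicity computation $g(k)-g(k-1)=2(k-1-d_k)\ge0$ for $k>\ell$---are exactly this rearrangement and this observation, spelled out in full.
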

We remark that this is just one of many similar characterizations of graphic sequences and that this form follows from the classical statement of the Erd\H{o}s--Gallai Theorem by rearranging terms and observing that only the first $\ell$ conditions need be checked (see e.g. \cite{hasselbarth1984verzweigtheit,nash-williamsValency,rousseau1995note,sierksma1991seven}).

We will call \eqref{eq:DC} the \emph{dominating condition}, so that a sequence of non-negative integers is a graphic
sequence if it satisfies the dominating condition and its sum is even. 
As might be expected, it turns out that about half of the sequences that satisfy the
dominating condition have an even sum although, surprisingly, the exact proportion
seems to converge to $1/2$ quite slowly as $n\to\infty$ (see Section~\ref{subsec:eo}).
We will first focus on counting the number of sequences that satisfy the dominating condition, and we will handle the parity condition in Section \ref{subsec:half} (and briefly in the proof of Proposition \ref{prop:firstasymptotics}).

\subsection{It's a walk!}\label{sec:reformulation}
We now describe a way of associating a random walk to a uniformly random graphic sequence
in such a way that we can easily check whether the sequence is graphic using only the walk. 
\begin{lemma}\label{lem:to_walk}
 Let $(\lw_i)_{i\ge1}$ be a lazy simple symmetric random walk and let $\lwa_k=\sum_{i=1}^k\lw_i$.
 Then the probability that a uniformly random sequence $n-1\ge d_1\ge\dotsb\ge d_n\ge 0$
 satisfies the dominating condition \eqref{eq:DC} is equal to
 \begin{equation}\label{eq:probDC}
  \Prb\big(\lwa_1,\dots,\lwa_{n-1}\ge 0  \mid \lw_{n-1}\in\{0,-1\}\big).
 \end{equation}
 The probability that it is graphic is 
  \begin{equation}\label{eq:probgraphic}
  \Prb\big(\lwa_1,\dots,\lwa_{n-1}\ge 0, \lwa_{n-1}\in 2\N \mid \lw_{n-1}\in\{0,-1\}\big).
 \end{equation}
\end{lemma}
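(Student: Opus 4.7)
My plan is to build an explicit bijection between monotone sequences $n-1\ge d_1\ge\dots\ge d_n\ge 0$ and lazy walks of length $n-1$ ending at $\{0,-1\}$, under which the dominating condition is equivalent to $\lwa_k\ge 0$ for all $k\le n-1$. The two ambient sets have matching cardinality: there are $\binom{2n-1}{n-1}$ sequences, and by Pascal's identity $\binom{2n-2}{n-1}+\binom{2n-2}{n-2}=\binom{2n-1}{n-1}$ the same number of lazy walks ending at $\{0,-1\}$. Since the unconditioned lazy walk has i.i.d.\ increments in $\{+1,0,0,-1\}$ with equal probability $\tfrac14$, conditioning on $\lw_{n-1}\in\{0,-1\}$ gives the uniform measure on that set. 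Thus both probabilities in the claim equal $N/\binom{2n-1}{n-1}$ for the appropriate count $N$, and it suffices to show that the two numerators agree.

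The bijection is most cleanly phrased via the shifted beta-set $\mathcal{B}(d)=\{d_i+n-i:1\le i\le n\}$, an $n$-subset of $\{0,1,\dots,2n-2\}$ (strictly decreasing beta numbers for the partition). I pair the ground set symmetrically about its centre $n-1$: for $i=1,\dots,n-1$ let $\Pi_i=\{i-1,\,2n-1-i\}$, leaving $n-1$ unpaired. Define $p_i=|\mathcal{B}(d)\cap\Pi_i|\in\{0,1,2\}$ and $\xi_i=1-p_i\in\{-1,0,+1\}$. The two realisations of $p_i=1$ (low or high element of $\Pi_i$ lying in $\mathcal{B}$) supply the two `flavours' of the zero-step of the lazy walk, so the induced distribution matches the required $(\tfrac14,\tfrac12,\tfrac14)$. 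Because $\sum_i p_i=n-\chi_{\mathcal{B}}(n-1)$, one finds $\lw_{n-1}=\chi_{\mathcal{B}}(n-1)-1\in\{-1,0\}$, exactly matching the endpoint condition ($n-1\in\mathcal{B}$ iff $\lw_{n-1}=0$).

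The core step is to verify that the dominating condition corresponds to $\lwa_k\ge 0$ for all $k\le n-1$. Writing $\lwa_k=\tfrac{k(k+1)}{2}-\sum_{j=1}^k(k-j+1)p_j$ (derived by summing $\lw_j=j-\sum_{i\le j}p_i$) and exploiting that each pair $\Pi_i$ has constant sum $2n-2$, one rewrites DC in terms of the $p_j$'s and the top-$k$/bottom-$k$ statistics of $\mathcal{B}$ and its complement (which encode $(d_i)$ and $(d_j')$ respectively). The main obstacle is that DC is tested only on $k\le\ell$ (the Durfee square size) while the walk-area condition is tested on $k\le n-1$: one has to check that for $k>\ell$ the area inequality follows automatically from the structural properties of $\mathcal{B}$, and conversely that any violation of DC at some $k\le\ell$ forces $\lwa_{k'}<0$ for some $k'$. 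The graphic statement~\eqref{eq:probgraphic} then follows by additionally imposing the parity condition $\sum d_i\in 2\N$, which under the bijection corresponds to $\lwa_{n-1}\in 2\N$.
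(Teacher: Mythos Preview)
Your bijection via the beta-set $\mathcal{B}(d)$ with the symmetric pairing $\Pi_i=\{i-1,2n-1-i\}$ is correct and, once unwound, coincides with the paper's: $\mathcal{B}(d)$ is exactly the set of $0$-indexed positions of the right-steps in the lattice path from $(0,n-1)$ to $(n,0)$, and pairing position $i-1$ with $2n-1-i$ is precisely pairing step $i$ of the path's first half $W$ with step $i$ of the reflected second half $W'$. Your increment $\xi_i=1-p_i$ equals the paper's $\tfrac12(Z_i+Z'_i)$, and the two ``flavours'' of the zero step (which element of $\Pi_i$ lies in $\mathcal{B}$) are the paper's two orderings $(\rightarrow,\downarrow)$ versus $(\downarrow,\rightarrow)$ of the $(W,W')$-pair. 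The cardinality count and the endpoint check $\lw_{n-1}\in\{0,-1\}$ are fine.

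The gap is that you do not actually resolve what you correctly flag as ``the main obstacle'': the equivalence between the dominating condition (indexed by \emph{column} $k\le\ell$) and the walk condition $\lwa_t\ge 0$ (indexed by \emph{time} $t\le n-1$). These indices live in different worlds, and you give no mechanism linking them; ``one rewrites DC in terms of the $p_j$'s'' and ``one has to check'' are assertions, not arguments. The paper supplies the missing idea: interpret $\lwa_t$ geometrically as the signed area enclosed between $W'$ and $W$ after $t$ steps. At any time $t$ when $W$ and $W'$ occupy the same column $i$ (equivalently $Y_t=0$) this area equals $\sum_{j\le i}(s_j-s'_j)$, and both the area-by-column process and the area-by-time process $\lwa$ are monotone between such coincidence points, so it suffices to test either one only at those points, where they agree. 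You also assert but do not verify the parity correspondence $\sum d_i\equiv \lwa_{n-1}\pmod 2$; the paper derives it from the same area picture. Without these arguments your proposal is a restatement of the lemma in beta-set coordinates rather than a proof.
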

This lemma already provides a heuristic for $G(n)=\Theta(4^n/n^{3/4})$. Note that there are
$\binom{2n-1}{n}=\Theta(4^n/\sqrt{n})$ sequences $n-1\ge d_1\ge\dotsb\ge d_n\ge 0$, so we want
\eqref{eq:probDC} to be $\Theta(n^{-1/4})$. Firstly, note that we only need to check that
$\lwa_k\ge 0$ for $k$ such that $\lw_k=0$ (and for $k=n-1$), because
$\lwa_k$ is monotone on excursions of $\lw$ away from~$0$. Up to time~$n$,
the walk $\lw$ visits zero $\Theta(n^{1/2})$ times (in probability) so the area process restricted to
times $k$ when $\lw_k=0$ is a random process with $\Theta(n^{1/2})$ steps. If this process
were a random walk, then a result from Feller \cite[Theorem XII.7.1a]{Feller1971} would tell
us that the probability that it stays non-negative is $\Theta((n^{1/2})^{-1/2})=\Theta(n^{-1/4})$. We also require that the total area $A_{n-1}$ is even, but intuitively this should happen with probability roughly $1/2$. Of course, the lengths (and hence the areas) of the excursions are not independent and we cannot apply Feller's result, but this heuristic does at least suggest the right order for~\eqref{eq:probDC}.

\begin{figure}
\centering
\subcaptionbox{Step 1: View a sequence as a lattice path.\label{subfig:step1}}[0.4\textwidth]{%
 \tikzset{vertex/.style={draw,shape=circle,fill=black,minimum size=2pt,inner sep=0}}
\begin{tikzpicture}[scale=0.37]

    \draw (0,13)--(0,0)--(14,0); 
    \foreach \x in {1,...,14}{\draw[very thin, gray] (\x,0)--(\x,13);} 
    \foreach \y in {1,...,13}{\draw[very thin, gray] (0,\y)--(14,\y);}

    \draw (1,0)--(1,11) (2,0)--(2,10) (3,0)--(3,10) (4,0)--(4,9) (5,0)--(5,7)
    (6,0)--(6,5) (7,0)--(7,5) (8,0)--(8,4) (9,0)--(9,4) (10,0)--(10,4) (11,0)--(11,4)
    (12,0)--(12,4) (13,0)--(13,3); 

    \draw[red,thick] (0,13)--(1,13)--(1,11)--(2,11)--(2,10)--(4,10)--(4,9)--(5,9)--(5,7)
    --(6,7)--(6,6);
    \draw[green!70!black,thick](6,6)--(6,5)--(8,5)--(8,4)--(13,4)--(13,3)--(14,3)--(14,0); 

    \node[vertex] at (0,13){}; 
    \node[left] at (0, 13) {$\scriptstyle n-1$};
    \node[vertex] at (14,0){};
    \node[below] at (14, 0) {$\scriptstyle n$};

    \node at (.5,-.5){$\scriptstyle d_1$};\node at (1.5,-.5){$\scriptstyle d_2$};\node at (3,-.5){$\cdots$};
    \draw[<->] (3.5,10.1)--(3.5,12.9);\node[fill=white, right, inner sep = 1pt] at (3.8,11.5){$\scriptstyle s_4$}; 
    \draw[<->] (13.1,3.5)--(13.9,3.5);\node[fill=white, above, inner sep = 0pt] at (13.5,3.8){$\scriptstyle s'_4$}; 
    \draw[dashed](0,0)--(13, 13); 
    \node[vertex] at (6,6){};
    \node[fill=white, right, inner sep = 1pt] at (6.3,5.9) {$\scriptstyle (\ell,\ell)$};
\end{tikzpicture}%
}\hfill%
\subcaptionbox{Step 2: Cut the path in half and reflect the latter half.\label{subfig:step2}}[0.4\textwidth]{%
  \tikzset{vertex/.style={draw,shape=circle,fill=black,minimum size=2pt,inner sep=0}}
\centering
\begin{tikzpicture}[scale=0.37]

    \draw (0,13)--(0,0)--(14,0); 
    \foreach \x in {1,...,14}{\draw[very thin, gray] (\x,0)--(\x,13);} 
    \foreach \y in {1,...,13}{\draw[very thin, gray] (0,\y)--(14,\y);}

    \draw[dashed](0,0)--(13, 13); 

    \node at (.5,-.5){$\scriptstyle d_1$};\node at (1.5,-.5){$\scriptstyle d_2$};\node at (3,-.5){$\cdots$};

    \draw[red,thick] (0,13)--(1,13)--(1,11)--(2,11)--(2,10)--(4,10)--(4,9)--(5,9)--(5,7)--(6,7)--(6,6); 
    \draw[red,thick,dotted] (6,6)--(6,5);
    \draw[green!70!black,thick] (0,13)--(3,13)--(3,12)--(4,12)--(4,7)--(5,7)--(5,6)--(5,5)--(6,5); 

    \draw (6,5)--(8,5)--(8,4)--(13,4)--(13,3)--(14,3)--(14,0); 

    \node[vertex] at (0,13){}; 
    \node[left] at (0, 13) {$\scriptstyle n-1$};
    \node[vertex] at (14,0){};
    \node[below] at (14, 0) {$\scriptstyle n$};

    \node[vertex] at (6,6){};

    \node at (2,9.5) [inner sep = 1pt, below left, red, fill = white] {$W$};

    \node at (0.5,13) [below = 1pt, font=\tiny,red, inner sep = 1pt] {$1$};
    \node at (1.5,11) [font=\tiny,red, above = 1pt, inner sep = 1pt] {$4$};
    \node at (2.5,10) [font=\tiny,red, below = 1pt, inner sep = 1pt] {$6$};
    \node at (3.5,10) [font=\tiny,red, above = 1pt, inner sep = 1pt] {$7$};
    \node at (4.5,9) [font=\tiny,red, below = 1pt, inner sep = 1pt] {$9$};
    \node at (5.5,7) [font=\tiny,red, above = 1pt, inner sep = 1pt] {$12$};
    \node at (1,12.5) [font=\tiny,red,right = 1pt, inner sep = 1pt] {$2$};
    \node at (1,11.5) [font=\tiny,red,left = 1pt, inner sep = 1pt] {$3$};
    \node at (2,10.5) [font=\tiny,red,left = 1pt, inner sep = 1pt] {$5$};
    \node at (4,9.5) [font=\tiny,red,left = 1pt, inner sep = 1pt] {$8$};
    \node at (5,8.5) [font=\tiny,red,right = 1pt, inner sep = 1pt] {$10$};
    \node at (5,7.5) [font=\tiny,red,left = 1pt, inner sep = 1pt] {$11$};
    \node at (6,6.5) [font=\tiny,red,right = 1pt, fill = white, inner sep = 1pt] {$13$};
    \node at (6,5.4) [font=\tiny,red,right = 1pt, inner sep = 1pt] {$14$};

    \node at (4.5,12) [green!70!black, fill=white, inner sep = 1pt, above right] {$W'$};
    \node at (0.5,13) [font=\tiny,green!70!black,above = 1pt, inner sep = 1pt] {$1$};
    \node at (1.5,13) [font=\tiny,green!70!black,above = 1pt, inner sep = 1pt] {$2$};
    \node at (2.5,13) [font=\tiny,green!70!black,above = 1pt, inner sep = 1pt] {$3$};
    \node at (3.5,12) [font=\tiny,green!70!black,above = 1pt, inner sep = 1pt] {$5$};
    \node at (4.5,7) [font=\tiny,green!70!black,below = 1pt, inner sep = 1pt] {$11$};
    \node at (5.5,5) [font=\tiny,green!70!black,below = 1pt, inner sep = 1pt] {$14$};
    \node at (3,12.5) [font=\tiny,green!70!black,left = 1pt, inner sep = 1pt] {$4$};
    \node at (4,11.5) [font=\tiny,green!70!black,right = 1pt, inner sep = 1pt] {$6$};
    \node at (4,10.5) [font=\tiny,green!70!black,right = 1pt, inner sep = 1pt] {$7$};
    \node at (4,9.5) [font=\tiny,green!70!black,right = 1pt, inner sep = 1pt] {$8$};
    \node at (4,8.5) [font=\tiny,green!70!black,left = 1pt, inner sep = 1pt] {$9$};
    \node at (4,7.5) [font=\tiny,green!70!black,left = 1pt, inner sep = 1pt] {$10$};
    \node at (5,6.5) [font=\tiny,green!70!black,right = 1pt, inner sep = 1pt] {$12$};
    \node at (5,5.5) [font=\tiny,green!70!black,left = 1pt, inner sep = 1pt] {$13$};

\end{tikzpicture}%
}\medskip\\
\subcaptionbox{Step 3: The dominating condition corresponds to the signed area between the pair of paths never taking negative values. \label{subfig:step3}}[0.4\textwidth]{%
  \tikzset{vertex/.style={draw,shape=circle,fill=black,minimum size=2pt,inner sep=0}}
\begin{tikzpicture}[scale=0.37]
    \draw (0,13)--(0,0)--(14,0); 
    \foreach \x in {1,...,14}{\draw[very thin, gray] (\x,0)--(\x,13);} 
    \foreach \y in {1,...,13}{\draw[very thin, gray] (0,\y)--(14,\y);}

    \draw[dashed](0,0)--(13, 13); 

    \node at (.5,-.5){$\scriptstyle d_1$};\node at (1.5,-.5){$\scriptstyle d_2$};\node at (3,-.5){$\cdots$};

    \draw[red,thick] (0,13)--(1,13)--(1,11)--(2,11)--(2,10)--(4,10)--(4,9)--(5,9)--(5,7)--(6,7)--(6,6); 
    \draw[red,thick,dotted] (6,6)--(6,5);
    \draw[green!70!black,thick] (0,13)--(3,13)--(3,12)--(4,12)--(4,7)--(5,7)--(5,6)--(5,5)--(6,5); 

    \draw (6,5)--(8,5)--(8,4)--(13,4)--(13,3)--(14,3)--(14,0); 

    \node[vertex] at (0,13){}; 
    \node[left] at (0, 13) {$\scriptstyle n-1$};
    \node[vertex] at (14,0){};
    \node[below] at (14, 0) {$\scriptstyle n$};

    \node[vertex] at (6,6){};

    \draw[<->] (3.5,10.1)--(3.5,11.9);
    \node at (3.5,11.5) [left = 2pt, inner sep = 0pt, fill = white] {$\scriptstyle s_4\text{-}s'_4$}; 
\end{tikzpicture}%
}\hfill%
\subcaptionbox{Step 4: The cumulative distance process between the two paths gives the area.\label{subfig:step4}}[0.4\textwidth]{%
  \tikzset{vertex/.style={draw,shape=circle,fill=black,minimum size=2pt,inner sep=0}}
\begin{tikzpicture}[scale=0.37]
  \draw (0,13)--(0,0)--(14,0); 
  \foreach \x in {1,...,14}{\draw[very thin, gray] (\x,0)--(\x,13);} 
  \foreach \y in {1,...,13}{\draw[very thin, gray] (0,\y)--(14,\y);}

  \draw[dashed](0,0)--(13, 13); 

  \draw[thick,gray] (1,12)--(2,13) (1,11)--(3,13) (2,11)--(3,12) (2,10)--(4,12) (3,10)--(4,11)
  (4,8)--(5,9) (4,7)--(5,8) (5,6)--(6,7) (5,5)--(6,6);
  \node at (5,12) [above, fill = white, inner sep = 1pt] {$\scriptstyle{Y_5=2}$};

  \node at (.5,-.5){$\scriptstyle d_1$};\node at (1.5,-.5){$\scriptstyle d_2$};\node at (3,-.5){$\cdots$};

  \draw[red,thick] (0,13)--(1,13)--(1,11)--(2,11)--(2,10)--(4,10)--(4,9)--(5,9)--(5,7)--(6,7)--(6,6); 
  \draw[red,thick,dotted] (6,6)--(6,5);
  \draw[green!70!black,thick] (0,13)--(3,13)--(3,12)--(4,12)--(4,7)--(5,7)--(5,6)--(5,5)--(6,5); 

  \draw (6,5)--(8,5)--(8,4)--(13,4)--(13,3)--(14,3)--(14,0); 

  \node[vertex] at (0,13){}; 
  \node[left] at (0, 13) {$\scriptstyle n-1$};
  \node[vertex] at (14,0){};
  \node[below] at (14, 0) {$\scriptstyle n$};

  \node at (2,10.5) [font=\tiny,red,left = 1pt, inner sep = 1pt] {$5$};
  \node at (3.5,12) [font=\tiny,green!70!black,above = 1pt, inner sep = 1pt] {$5$};

  \node[vertex] at (6,6){};

\end{tikzpicture}%
}%
\caption{By using a number of reformulations, we show that the probability that a uniformly random sequence
$n-1\ge d_1\ge d_2\ge\dotsb\ge d_n\ge 0$ satisfies the dominating condition is equal to the probability that
the integral of a lazy simple symmetric random walk with $n-1$ steps, conditioned to end in $0$ or~$-1$,
does not take negative values.}
\label{fig:reformulation}
\end{figure}

We will now prove Lemma~\ref{lem:to_walk}. In order to get from a uniformly random non-increasing
sequence to a lazy random walk conditioned to end in $\{0,-1\}$, we need to make a few reformulations. 

\paragraph{Step 1: View a sequence as a lattice path.}
We start by viewing the non-increasing sequence as a lattice path $P$ from $(0,n-1)$ to $(n,0)$
which only takes steps to the right and downwards (see Figure~\ref{subfig:step1}).
Informally, we put $n$ stacks of heights $d_1,d_2,\dots,d_n$ respectively next to each other
and let the $P$ be the path from $(0,n-1)$ to $(n,0)$ that traces the outline of the stacks.
To be precise, the path begins with $n-1-d_1$ steps downwards before taking a step right.
For each $2\le i\le n$, the walk takes $d_{i-1}-d_i$ steps downwards on the line $x=i-1$
before taking a step right, and it ends with $d_n$ steps downwards to end at the point $(n,0)$.
There must be a unique $\ell$ such that the walk goes through the point $(\ell,\ell)$,
and it is not hard to see that $\ell$ is the largest $j$ such that $d_j\ge j$.

\paragraph{Step 2: Cut the path in half and reflect the latter half.}
Starting from $(0,n-1)$, the path takes $n-1$ steps to reach the point $(\ell,\ell)$
as each step decreases $y-x$ by~1, and we define a path $W$ which starts with these $n-1$
steps before ending with one final step down. We now define another path $W'$ starting
at $(0,n-1)$ using the other end of the path $P$. Starting at $(n,0)$, walk backwards along
the path $P$ to $(\ell,\ell)$ and, at each step, add the ``reflected" step to $W'$:
if the step on $P$ is vertical, add a right step to~$W'$; otherwise add a down step to~$W'$.
This gives two paths $W$ and $W'$ which take $n$ steps and both end at $(\ell,\ell-1)$
(see Figure~\ref{subfig:step2}, in which the steps on the two paths have been numbered).

\paragraph{Step 3: The dominating condition corresponds to the signed area between the pair of paths never taking negative values.}
If the walk $W$ (resp.\ $W'$) has a horizontal line from $(i-1,a)$ to $(i,a)$,
then $s_i = n-1-a$ (resp.\ $s_i'=n-1-a$). In particular, if $W$ is at $(i,a)$
after $k$ steps, the sum $\sum_{j=1}^is_j$ is exactly the area enclosed by the walk~$W$,
the line $y=n-1$ and the line $x=i$. Similarly, if $W'$ is at $(i,a)$
after $k$ steps, the sum $\sum_{j=1}^is'_j$ is exactly the area enclosed by the walk~$W'$,
the line $y=n-1$ and the line $x=i$. Therefore, if $W$ and $W'$ are both at $(i,a)$ after
$k$ steps, then the signed area between $W'$ and $W$ is exactly the
sum $\sum_{j=1}^i(s_j-s_j')$ (see Figure~\ref{subfig:step3}).
The dominating condition checks if this sum is non-negative for all~$i$. 

\paragraph{Step 4: The cumulative distance process between the two paths gives the area.}
Define $Z_1,\dots,Z_{n}$ by setting $Z_i$ equal to $+1$ if the walk $W$ goes down at
the $i$th step, and $-1$ if it goes right. Similarly, define $Z_1',\dots,Z_{n}'$ by
setting $Z_i'$ equal to $-1$ if the walk $W'$ goes down at the $i$th step, and $+1$
otherwise. Then,
$\lw_i=\tfrac{1}{2}\sum_{j=1}^i (Z_j + Z_j')$ keeps track of the (signed) number of diagonal
right/up steps from the walk $W$ to the walk $W'$. When the two walks coincide at time~$k$,
there will be a diagonal line through every box between the two walks, and hence the number
of diagonal lines is equal to the signed area between $W'$ and~$W$ up to time $k$
(see Figure~\ref{subfig:step4}). We claim that we only need to check the dominating
condition when the two walks coincide. Indeed, between times at which the walks coincide,
the area process is monotone so if the area process first takes a negative value at some time $i$ where the walks do not coincide, the process will still be negative when the walks next coincide.
However, the number of diagonal lines used up to time $k$ is the integral of $\lw$ up to
time~$k$, and it suffices to check that condition $\sum_{i=1}^k \lw_i \ge 0$ at all times $k$ where the walks coincide. Since the integral of $Y$ is also monotone on excursions away from zero (i.e. between times when $W$ and $W'$ coincide), the condition (\ref{eq:DC}) is equivalent to $\sum_{i=1}^k \lw_i \ge 0$ for all $k \le n$. But $Y_n=0$, so it is in fact sufficient that 
\begin{equation}\label{eq:WDC}
 \sum_{i=1}^k \lw_i \ge 0 \quad \text{for all }k \le n-1.
\end{equation}

\paragraph{Step 5: The parity condition corresponds to the integral of $Y$ being even}
We now claim that
\begin{equation}\label{eqn:parity}
    \sum_{i=1}^n d_i\equiv \sum_{i=1}^{n-1} \lw_i \mod 2.
\end{equation}
Split the sum $\sum_{i=1}^n d_i$ into $A_{\text{red}}=\sum_{i=1}^{\ell}d_i$ and $A_{\text{green}}=\sum_{i=\ell+1}^{n}d_i$. In Figure \ref{subfig:step1}, $A_\text{red}$ is the area below the red curve (left of $(\ell,\ell)$) and $A_{\text{green}}$ is the area below the green curve (right of $(\ell,\ell)$). Clearly, $A_\text{red}$ is also the area under the walk $W$. The area under the walk $W'$ is given by $A_\text{green} + \ell(\ell - 1)$. Hence, the signed area enclosed by $W'$ and $W$ is given by \[(A_\text{green}+\ell(\ell-1))-A_\text{red}\equiv A_\text{green}+A_\text{red} \mod 2.\]
It was shown in the previous step that the signed area enclosed by $W'$ and $W$ is given by $\sum_{i=1}^{n} \lw_i=\sum_{i=1}^{n-1} \lw_i$, and the claim follows.

\paragraph{Step 6: The distance process between a random pair of paths is a conditioned lazy random walk.}
Finally, we need to understand the distribution of $\lw$ when we sample a sequence
$n-1\ge d_1\ge\dotsb\ge d_n\ge 0$ uniformly at random. First, observe that Step~2 gives a bijection
between such non-increasing sequences and pairs of lattice paths $(W,W')$ of $n$ steps that
start at $(0,n-1)$ and end at the same point, and such that $W$ ends by taking a downwards step. These are in turn in bijection with pairs of lattice paths $(W, W')$ with $n - 1$ steps that start at $(0, n-1)$ and for which the corresponding walk $(Y_k)_{k=0}^{n-1}$ has $Y_{n-1} \in \{0, -1\}$. 
Therefore, sampling a uniformly random sequence
corresponds to sampling a uniformly random pair of such paths. If we ignore the
requirement that $Y_{n-1} \in \{0, -1\}$, then for any~$j$, $Z_j$ and
$Z'_j$ have opposite signs with probability $1/2$, and both have value $+1$ (or $-1$)
with probability $1/4$, making $\lw$ a lazy simple symmetric random walk.
To recover the actual distribution of (the first $n-1$ steps of) $\lw$, we need to restrict to the paths for which $Y_{n-1} \in \{0, 1\}$, and we simply condition on this being the case.

\bigskip

We remark that steps 1 through 5 give a deterministic mapping which maps a sequence $n -1 \geq d_1 \geq \dotsb d_n \geq 0$ to a walk $(Y_i)_{i=1}^n$ and we have shown that we can check if the sequence is graphic by checking certain properties of the walk. 
In Section \ref{sec:comp}, we use this reformulation (without introducing randomness) to enumerate the number of graphic sequences for small $n$. 

\section{Proof of main result}
\label{sec:proof}
In this section, we prove Theorem~\ref{thm:main}, which is a direct consequence of the following two results.  

\begin{prop}\label{prop:areanonnegrectangle}
 A uniformly random sequence $n-1\ge d_1\ge\dotsb\ge d_n\ge 0$ has probability
 \[
  (1+o(1))\frac{\Gamma(3/4)}{\sqrt{2\pi(1-\rho)}}n^{-1/4}
 \]
 of satisfying the dominating condition \eqref{eq:DC}.
\end{prop}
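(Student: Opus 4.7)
The plan is to combine Lemma~\ref{lem:to_walk} with Proposition~\ref{prop:areanonneg}, reconciling the minor mismatch in conditioning. By Lemma~\ref{lem:to_walk}, the probability of interest equals
\[
\Prb(\lwa_1,\ldots,\lwa_{n-1}\ge 0\mid \lw_{n-1}\in\{0,-1\}) = \frac{\Prb(\lwa_1,\ldots,\lwa_{n-1}\ge 0,\,\lw_{n-1}=0) + \Prb(\lwa_1,\ldots,\lwa_{n-1}\ge 0,\,\lw_{n-1}=-1)}{\Prb(\lw_{n-1}=0) + \Prb(\lw_{n-1}=-1)}.
\]
The local central limit theorem for the lazy simple symmetric random walk (per-step variance $1/2$) gives $\Prb(\lw_{n-1}=y)\sim(\pi n)^{-1/2}$ uniformly over bounded $y$, so the denominator is $(1+o(1))\cdot 2(\pi n)^{-1/2}$. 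Writing $c := \Gamma(3/4)/\sqrt{2\pi(1-\rho)}$, it therefore suffices to prove that $\Prb(\lwa_1,\ldots,\lwa_{n-1}\ge 0\mid \lw_{n-1}=y) = (1+o(1))\,c\,n^{-1/4}$ for each $y\in\{0,-1\}$. The case $y=0$ is precisely Proposition~\ref{prop:areanonneg} with $n-1$ in place of $n$.

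For the case $y=-1$, I would start from the one-step extension identity
\[
\Prb(\lwa_1,\ldots,\lwa_n\ge 0,\,\lw_n=0) = \tfrac{1}{2}\Prb(\lwa_1,\ldots,\lwa_{n-1}\ge 0,\,\lw_{n-1}=0) + \tfrac{1}{4}\sum_{y=\pm 1}\Prb(\lwa_1,\ldots,\lwa_{n-1}\ge 0,\,\lw_{n-1}=y),
\]
which holds because $\lw_n=0$ is reachable from $\lw_{n-1}\in\{-1,0,+1\}$ with probabilities $1/4,1/2,1/4$, and in each such case $\lwa_n=\lwa_{n-1}+\lw_n=\lwa_{n-1}$ so the non-negativity event is preserved. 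Combined with Proposition~\ref{prop:areanonneg} and the local CLT, this identity pins down the \emph{sum} of the $y=\pm1$ contributions; isolating $y=-1$ then requires an endpoint-insensitivity statement asserting that $\Prb(\lwa_1,\ldots,\lwa_{n-1}\ge 0\mid\lw_{n-1}=y)\sim c\,n^{-1/4}$ individually for $y\in\{-1,+1\}$, not merely on average.

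The main obstacle will be this endpoint-insensitivity, since the event $\lwa\ge 0$ is not invariant under $\lw\mapsto -\lw$, so one cannot reflect $y=+1$ to $y=-1$. I would expect it to drop out of the proof scheme used for Proposition~\ref{prop:areanonneg} itself: that proof presumably decomposes $\lw$ into excursions around $0$ and extracts the $n^{-1/4}$ from a Feller-type persistence asymptotic for the induced area-per-excursion process (with $\rho$ entering through the excursion-area distribution, matching the intuition in the paragraph after Lemma~\ref{lem:to_walk}). Such an excursion-level analysis is blind to an $O(1)$ shift of the endpoint: modifying the last one or two increments changes the area by $O(1)$ and the number of zero-excursions by at most $O(1)$, both negligible against the $\Theta(n^{1/2})$ excursions and $\Theta(n^{3/2})$ typical area scale that drive the leading constant. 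Once this asymptotic for $y=-1$ is in place, substituting back into the conditional-probability display gives the claimed $(1+o(1))\,c\,n^{-1/4}$.
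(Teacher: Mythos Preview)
Your overall architecture matches the paper's exactly: reduce via Lemma~\ref{lem:to_walk}, handle the denominator by the local CLT, settle $y=0$ by Proposition~\ref{prop:areanonneg}, and use the one-step extension identity to pin down the \emph{sum} of the $y=\pm1$ terms. The paper then completes the argument by proving precisely the endpoint-insensitivity statement you isolate, namely
\[
\Prb(\lwa_1,\dots,\lwa_{n-1}\ge 0,\,\lw_{n-1}=-1)\sim\Prb(\lwa_1,\dots,\lwa_{n-1}\ge 0,\,\lw_{n-1}=1).
\]

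The gap in your proposal is that your justification of this last step is not an argument. You suggest it should ``drop out of the proof scheme'' of Proposition~\ref{prop:areanonneg} because the excursion decomposition is ``blind to an $O(1)$ shift of the endpoint'', but the proof of Proposition~\ref{prop:areanonneg} is written for genuine bridges ($\lw_n=0$), where the excursion areas form an exchangeable, sign-symmetric family; when the walk ends at $\pm1$ there is an incomplete final excursion and that symmetry is broken, so the machinery does not transfer automatically. Nor is it a matter of ``modifying the last one or two increments'': the laws conditioned on $\lw_{n-1}=-1$ versus $\lw_{n-1}=+1$ are globally different, not paths differing in $O(1)$ places.

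The paper's fix is concrete and worth knowing. It defines an explicit injection $f\colon\cY_-\to\cY_+$ that flips the sign of the \emph{last} excursion of the walk away from~$0$; this is measure-preserving on individual paths and sends $\lw_{n-1}=-1$ to $\lw_{n-1}=+1$ while only increasing the partial areas. One then has to show $\Prb(Y\in\cY_+\setminus f(\cY_-))=o(n^{-3/4})$, which amounts to bounding the probability that the total area $\lwa_{n-1}$ is smaller than twice the area of the last excursion. This is handled by showing the last excursion is short (length $\le n^{1/2+\varepsilon}$ with high probability) and invoking the local limit theorem (Lemma~\ref{lem:locallimitlazy}) to say $\lwa_n$ is rarely $O(n^{1+\varepsilon})$. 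So the missing ingredient is this excursion-flipping injection together with a quantitative tail bound, not a soft appeal to the bridge analysis.
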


\begin{lemma}\label{lem:parity}
 A uniformly random sequence $n-1\ge d_1\ge\dotsb\ge d_n\ge 0$ which satisfies the
 dominating condition \eqref{eq:DC}, has probability $1/2+o(1)$ of being a
 graphic sequence \textup(equivalently, of having $\sum_{i=1}^nd_i$ even\textup).
\end{lemma}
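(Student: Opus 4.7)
The plan is to combine the identity $\lwa_{n-1}=\sum_{j=1}^{n-1}(n-j)X_j$ (where $X_j:=\lw_j-\lw_{j-1}$) with $X_j\equiv|X_j|\pmod{2}$ to obtain
\[
 \lwa_{n-1}\equiv|T\cap J|\pmod{2},
\]
where $T=\{j\in\{1,\dots,n-1\}:X_j\neq 0\}$ and $J=\{j\in\{1,\dots,n-1\}:n-j\text{ is odd}\}$. By \eqref{eqn:parity} and Lemma~\ref{lem:to_walk}, it suffices to show that $\Prb[\lwa_{n-1}\text{ even}\mid DC,B]=\tfrac12+o(1)$, where $B=\{\lw_{n-1}\in\{0,-1\}\}$ and $DC$ denotes $\lwa_1,\dots,\lwa_{n-1}\ge 0$.

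For each $i\le\lfloor(n-1)/2\rfloor$, call the pair $(X_{2i-1},X_{2i})$ \emph{usable} if exactly one entry is zero and swapping the entries preserves $DC$. The swap leaves $\lw_k$ unchanged for $k\neq 2i-1$ (so preserves $B$), shifts $\lwa_k$ by $X_{2i}-X_{2i-1}$ uniformly for $k\ge 2i-1$, and flips the parity of $|T\cap J|$ (since $2i-1$ and $2i$ have opposite parities with exactly one in~$J$). Pairs of type $(0,1)$ and $(-1,0)$ are always usable (the swap shifts $\lwa_k$ by $+1$, automatically preserving $DC$), while pairs of type $(1,0)$ and $(0,-1)$ are usable iff $\min_{k\ge 2i-1}\lwa_k\ge 1$. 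A direct check shows that after the swap the pair at $i$ remains usable (e.g.\ swapping $(0,1)\to(1,0)$ yields $\min_{k\ge 2i-1}\lwa_k^{\mathrm{new}}\ge 1$), so the swap $\phi_i$ is an involution on the $DC\cap B$ walks whose pair at~$i$ is usable.

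Let $N(\lw)$ denote the total number of usable pair positions; summing the individual involutions $\phi_i$ yields
\[
 \E\bigl[N\cdot\mathbbm{1}[\lwa_{n-1}\text{ even}, DC, B]\bigr]=\E\bigl[N\cdot\mathbbm{1}[\lwa_{n-1}\text{ odd}, DC, B]\bigr].
\]
Hence for any $\lambda\in\R$, the difference $E-O:=\Prb[\lwa_{n-1}\text{ even}, DC, B]-\Prb[\lwa_{n-1}\text{ odd}, DC, B]$ equals $\E[(1-\lambda N)(\mathbbm{1}[\lwa_{n-1}\text{ even}]-\mathbbm{1}[\lwa_{n-1}\text{ odd}])\mathbbm{1}[DC,B]]$. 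Choosing $\lambda=1/\E[N\mid DC,B]$ and applying Cauchy--Schwarz,
\[
 \Bigl|\Prb[\lwa_{n-1}\text{ even}\mid DC,B]-\tfrac12\Bigr|\le\tfrac12\sqrt{\Var(N\mid DC,B)\big/\E[N\mid DC,B]^2}.
\]

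The final step is to show that this coefficient of variation is $o(1)$. Walkwise, $N\ge N_0:=\#\{i:(X_{2i-1},X_{2i})\in\{(0,1),(-1,0)\}\}$, and under the unconditioned lazy SRW measure $N_0\sim\Bin(\lfloor(n-1)/2\rfloor,\tfrac14)$ is a $1$-Lipschitz function of the independent increments, so Azuma--Hoeffding gives $\Prb[|N_0-\E N_0|\ge t]\le 2\exp(-t^2/(2(n-1)))$. Combined with $\Prb[DC,B]=\Theta(n^{-3/4})$ (from Proposition~\ref{prop:areanonnegrectangle} and the local CLT for $\Prb[B]$), a layer-cake estimate transfers this super-polynomial tail to the $DC,B$-conditioning to give $\E[N_0\mid DC,B]=\Theta(n)$ and $\E[(N_0-\E N_0)^2\mid DC,B]=O(n\log n)$. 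The main obstacle is the contribution to $N-N_0$ from pairs of type $(1,0)$ and $(0,-1)$, whose usability depends non-Lipschitzly on the suffix minimum $\min_{k\ge 2i-1}\lwa_k$; bounding this separately, for example via excursion estimates for the conditioned area process, should close the argument and in fact yield the $O(\sqrt{\log n/n})$ rate, consistent with the slow convergence noted in the paper.
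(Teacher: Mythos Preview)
Your setup is sound and genuinely different from the paper's. The paper also uses a parity-flipping move, but it works directly on the right/down path $(B_1,\dots,B_{2n-1})$, switching a single adjacent pair $(B_j,B_{j+1})$ at the \emph{last} suitable even position near the middle, and it proves the result by a direct partial matching between even and odd sequences rather than by a second-moment/coefficient-of-variation argument. Your innovation of summing $\E[U_i(\mathbbm 1_{\mathrm{even}}-\mathbbm 1_{\mathrm{odd}})\mathbbm 1_{DC,B}]=0$ over all usable $i$ to get $\E[N(\mathbbm 1_{\mathrm{even}}-\mathbbm 1_{\mathrm{odd}})\mathbbm 1_{DC,B}]=0$ and then applying Cauchy--Schwarz is elegant, and the parity identity $\lwa_{n-1}\equiv|T\cap J|\pmod 2$ together with the involution check are correct.

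However, the proof is incomplete exactly where you say it is. You need $\Var(N\mid DC,B)=o(\E[N\mid DC,B]^2)=o(n^2)$, and controlling $N_0$ alone does not suffice: you must bound $\Var(N_1\mid DC,B)$ for the $(1,0)/(0,-1)$ pairs, which a priori could be as large as $\Theta(n^2)$. The observation that a pair of this type at position $i$ is usable iff $2i-1$ exceeds the last zero $\tau^*$ of $\lwa$ shows that $N_1$ differs from the Lipschitz count $M_1$ (all pairs of type $(1,0)/(0,-1)$) by at most $\tau^*/2$, so what you actually need is $\tau^*=o(n)$ with high probability conditional on $DC,B$ --- e.g.\ $\tau^*\le n^{1/2}$ with probability $1-o(1)$. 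This is precisely the content of the paper's Lemma~\ref{lem:hittingtimeat0}/Lemma~\ref{lem:slack}, and it is not a routine ``excursion estimate'': it relies on the $O(k^{-5/2})$ local bound of Lemma~\ref{lem:upperboundintegral0} and the $\Theta(n^{-1/4})$ persistence asymptotic. Once you invoke that lemma your argument closes (indeed with the quantitative rate you mention), but at that point the paper's direct matching is shorter: given Lemma~\ref{lem:slack}, one simply matches each sequence to its switch at the last usable position and is done, without any variance computation.
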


The remainder of this section is structured as follows. In Section~\ref{subsec:useful}
we introduce a lemma (due to Burns \cite{Burns2007TheNO}) on exchangeable random sequences that turns out to be very
useful in our proofs.
We use the lemma in Section~\ref{subsec:firstasymptotics} to show that $G(n)=\Theta(4^nn^{-3/4})$. Finally, in Section~\ref{subsec:proofoverview}, we use
the lemma again to prove Proposition \ref{prop:areanonneg}, which states that, for $\lwa$ the area process of a lazy simple symmetric random walk $\lw$, as $n\to \infty$,
 \begin{equation}\label{eq:convprob}
  n^{1/4}\Prb(\lwa_1,\dots,\lwa_{n}\ge 0\mid \lw_n=0) \to \frac{\Gamma(3/4)}{\sqrt{2\pi(1-\rho)}}.
\end{equation}
This equation will be used to prove Proposition \ref{prop:areanonnegrectangle} in Section \ref{subsec:squaretorectangle}.
We also postpone the proofs of Lemma~\ref{lem:parity} and the lemmas used in the proof of Proposition~\ref{prop:areanonneg} to Section~\ref{sec:technical}. An overview of the notation is given in Appendix \ref{sec:notation_overview}. 

\subsection{A useful lemma}\label{subsec:useful}
We will make use of the following useful lemma that appears in Sections 2.3--2.4
of~\cite{Burns2007TheNO}. We include a proof in the appendix for completeness.

\begin{restatable}{lemma}{useful}\label{lem:useful}
 Let $x=(x_1,\dots,x_n)\in\R^n$, let $\sigma$ be a uniformly random permutation of\/ $[n]$
 and let $s=(s_1,\dots,s_n)$ be an independent uniformly random element of $\{-1,1\}^n$. Then 
\[
       \Prb\left(\sum_{i=1}^k s_i x_{\sigma(i)}\ge 0\text{ for all }k\in [n]\right)\ge \frac{(2n-1)!!}{2^n n!}
\] and
 \[
  \Prb\left(\sum_{i=1}^k s_i x_{\sigma(i)}> 0\text{ for all }k\in [n]\right)\leq \frac{(2n-1)!!}{2^n n!}.
 \]
 If, additionally, for all distinct $A,A'\subseteq [n]$, the
 corresponding sums are also distinct, i.e.\ $\sum_{i\in A}x_i\ne \sum_{i\in A'}x_i$, then 
\[
       \Prb\left(\sum_{i=1}^k s_i x_{\sigma(i)}\ge 0\text{ for all }k\in [n]\right)= \frac{(2n-1)!!}{2^n n!}
\]
and 
\[ 
 \Prb\left(\sum_{i=1}^k s_i x_{\sigma(i)}> 0\text{ for all }k\in [n]\right)= \frac{(2n-1)!!}{2^n n!}.
\]
 
\end{restatable}

We will apply this lemma to sequences of exchangeable random variables of which the law
is invariant under sign changes of the elements, so we use the following equivalent form of the lemma.

\begin{lemma}\label{lem:useful2}
 Let $(X_1,\dots,X_n)$ be a random variable in $\R^n$ such that for any $\sigma\in S_n$
 and any $s\in\{-1,1\}^n$, 
 \[
  (X_1,\dots,X_n)\overset{d}{=}(s_1X_{\sigma(1)},\dots,s_nX_{\sigma(n)}).
 \]
 Then, 
 \begin{equation}
 \label{eq:useful_lemma_1}
  \Prb\left(\sum_{i=1}^k X_i\ge 0\text{ for all\/ }k\in [n]\right)\ge \frac{(2n-1)!!}{2^n n!}
 \end{equation}
 and 
  \begin{equation}\label{eq:useful_lemma_2}
  \Prb\left(\sum_{i=1}^k X_i > 0\text{ for all\/ }k\in [n]\right) \leq  \frac{(2n-1)!!}{2^n n!}.
 \end{equation}
  If, additionally, almost surely, for all distinct $A,A'\subseteq [n]$, the
 corresponding sums are also distinct, i.e.\ $\sum_{i\in A}X_i\ne \sum_{i\in A'}X_i$, then 
\begin{equation}
\label{eq:useful_lemma_3}
  \Prb\left(\sum_{i=1}^k X_i\ge 0\text{ for all\/ }k\in [n]\right)= \frac{(2n-1)!!}{2^n n!}
 \end{equation}
 and 
  \begin{equation}\label{eq:useful_lemma_4}
  \Prb\left(\sum_{i=1}^k X_i > 0\text{ for all\/ }k\in [n]\right) =  \frac{(2n-1)!!}{2^n n!}.
 \end{equation}
 
\end{lemma}
In the following sections, we will apply this lemma to the sequence of excursion areas of a lazy simple symmetric random walk bridge. To be precise, let $\lb = (\lb_k)_{k=1}^{n}$ be the first $n$ steps of the lazy SSRW $ \lw = (\lw_k)_{k \geq 1}$ conditioned on the event $\lw_n=0$,  so that $\lb$ is a lazy simple symmetric random walk bridge of length $n$. Let $N_n$ be the number of times that the  bridge $\lb$ hits~$0$ (after time 0).
Condition on the event that $N_n = N$. Then the walk $\lb$ has $N$ excursions with areas
$\lbae_1,\lbae_2,\dots,\lbae_N$ say. Since the walk $\lba_k = \sum_{i=1}^k \lb_i$ is monotone during the individual excursions, the walk $\lbac=(\lbac_k)_{k=1}^N$, defined by \[
\lbac_k=\sum_{i=1}^k\lbae_i,
\]
is never negative if and only if the walk $\lba$ is never negative. The increments of $\lbac$ $(\lbae_1,\lbae_2,\dots,\lbae_N)$ are exchangeable and their law is invariant under sign changes, so we can use Lemma \ref{lem:useful2} to estimate 
$\Prb\left(\lbac_1,\dots, \lbac_N\geq 0\right)$,  which is equal to the probability we want to calculate: $\Prb(\lba_1,\dots,\lba_n\geq 0)$.

To see that Lemma \ref{lem:useful2} indeed implies Lemma \ref{lem:useful}, observe that for $x$, $s$ and $\sigma$ as in the statement of Lemma \ref{lem:useful}, the random variable $(X_1,\dots,X_n):=(s_1x_{\sigma(1)},\dots,s_nx_{\sigma(n)})$ satisfies the conditions of Lemma \ref{lem:useful2}. We now show that Lemma~\ref{lem:useful} implies Lemma~\ref{lem:useful2}.

\begin{proof}[Proof of Lemma \ref{lem:useful2}]
We observe that, for $\sigma$ a uniformly random permutation of $[n]$ and $s=(s_1,\dots,s_n)$
an independent uniformly random element of $\{-1,1\}^n$,
\begin{align*}
 \Prb\left(\sum_{i=1}^k X_i\ge 0\text{ for all }k\in [n]\right)
 &=\Prb\left(\sum_{i=1}^k s_i X_{\sigma(i)}\ge 0\text{ for all }k\in [n]\right)\\
 &=\E\left[\Prb\left(\sum_{i=1}^k s_i X_{\sigma(i)}\ge 0\text{ for all }k\in [n]\mid X_1,\dots,X_n\right)\right],
\end{align*}
so that \eqref{eq:useful_lemma_1} and \eqref{eq:useful_lemma_3} follow by applying Lemma~\ref{lem:useful} to
\[
 \Prb\left(\sum_{i=1}^k s_i X_{\sigma(i)}\ge 0\text{ for all }k\in [n]\mid X_1,\dots,X_n\right).
\]
The derivation of \eqref{eq:useful_lemma_2} and \eqref{eq:useful_lemma_4} is similar.
\end{proof}

We make the following observation about the value of the bounds in Lemmas~\ref{lem:useful} and~\ref{lem:useful2}. Note that by taking either an even or odd number of terms in Wallis's product formula
\[
 \frac{\pi}{2}=\frac{2}{1}\cdot \frac{2}{3}\cdot \frac{4}{3}\cdot \frac{4}{5}\cdot \frac{6}{5}\cdots
\]
one obtains alternately lower and upper bounds for $\frac{\pi}{2}$. Rearranging these inequalities 
gives that for all $n\ge1$
\begin{equation}\label{e:ulbounds}
 \frac{1}{\sqrt{\pi(n+1/2)}}\le \frac{(2n-1)!!}{2^nn!}=\binom{2n}{n}\frac{1}{4^n} \le \frac{1}{\sqrt{\pi n}}.
\end{equation}

\subsection{First asymptotics}\label{subsec:firstasymptotics}
In this section, we determine the asymptotics of the number of sequences that satisfy
the dominating condition, as well as $G(n)$, up to a constant factor.

\begin{prop}\label{prop:firstasymptotics}
The number of graphic sequences of length $n$ is $G(n)=\Theta(4^n/n^{3/4})$.
\end{prop}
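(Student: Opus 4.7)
The plan is to apply Lemma~\ref{lem:to_walk} together with $\binom{2n-1}{n-1}=\Theta(4^n n^{-1/2})$ to reduce the claim to showing that
\[
\Prb\bigl(\lwa_1,\dots,\lwa_{n-1}\ge 0,\ \lwa_{n-1}\in 2\N\bigm|\lw_{n-1}\in\{0,-1\}\bigr)=\Theta(n^{-1/4}).
\]
I would bound this conditional probability by combining a persistence estimate for the integrated lazy random walk bridge with a separate parity argument, and I would not attempt to extract the exact constant here (that is deferred to Proposition~\ref{prop:areanonneg}).

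For the \textbf{upper bound} I would first drop the parity restriction and show $\Prb(\lwa_1,\dots,\lwa_{n-1}\ge 0\mid \lw_{n-1}\in\{0,-1\})=O(n^{-1/4})$. The key observation, already noted after Lemma~\ref{lem:to_walk}, is that $\lwa$ is monotone on excursions of $\lw$ away from $0$, so non-negativity of $\lwa$ only needs to be checked at the zero-returns of $\lw$. Unconditionally, the signed excursion areas are i.i.d.\ symmetric and heavy-tailed (in the domain of attraction of a $1/3$-stable law), and there are $\Theta(n^{1/2})$ excursions up to time $n-1$. Vysotsky's persistence theorem for integrated random walk bridges \cite{Vysotsky2014} then yields the desired $O(n^{-1/4})$ bound, and the conditioning $\lw_{n-1}\in\{0,-1\}$ only alters the estimate by a multiplicative constant.

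For the \textbf{lower bound}, the same reduction combined with the matching lower bound in Vysotsky's theorem gives $\Prb(\lwa_1,\dots,\lwa_{n-1}\ge 0\mid \lw_{n-1}\in\{0,-1\})=\Omega(n^{-1/4})$. To retain a constant fraction after imposing the parity condition $\lwa_{n-1}\in 2\N$, I would use a short parity-flipping argument: given a walk with $\lw_{n-1}\in\{0,-1\}$, DC, and $\lwa_{n-1}\ge 1$, decreasing $\xi_{n-1}$ by~$1$ (which is valid whenever $\xi_{n-1}\in\{0,1\}$) preserves DC, keeps the endpoint in $\{0,-1\}$, and flips the parity of $\lwa_{n-1}$. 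This provides an injection pairing most even-parity DC walks with odd-parity ones, so that each parity class carries $\Omega(1)$ of the DC mass.

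The \textbf{main obstacle} is the persistence analysis under the joint conditioning on $\lw_{n-1}\in\{0,-1\}$: standard statements apply to SSRW bridges pinned at $0$, whereas here we have a lazy walk with a two-point endpoint condition, and one must verify that the order of magnitude is unchanged. The exchangeability lemma from Section~\ref{subsec:useful} will be useful in making this passage precise. The parity argument is comparatively easy at this level of precision, though it will need to be considerably refined to establish the $1/2+o(1)$ estimate of Lemma~\ref{lem:parity}.
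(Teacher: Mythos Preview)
Your high-level strategy matches the paper's: reduce via Lemma~\ref{lem:to_walk} and $\binom{2n-1}{n-1}=\Theta(4^n n^{-1/2})$ to a persistence estimate, then handle parity separately. Two points, however, need correction.

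For the persistence estimate, the paper applies Vysotsky's result (Theorem~\ref{thm:vysotsky}) \emph{directly} to the lazy SSRW $Y$, which is right-continuous, obtaining $\Prb(\lwa_1,\dots,\lwa_n>0\mid \lw_n=0)=\Theta(n^{-1/4})$; there is no need to pass through the excursion-area walk or invoke any stable-law heuristics. The passage from conditioning on $\{\lw_{n-1}=0\}$ to $\{\lw_{n-1}\in\{0,-1\}\}$ is not done via the exchangeability lemma of Section~\ref{subsec:useful} but via the elementary monotonicity that $\Prb(\lwa_1,\dots,\lwa_n\ge0\mid \lw_n=k)$ is increasing in $k$ (Lemma~\ref{lem:monotoneinY_n}), which immediately shows the two conditionings differ by at most a factor of~$2$ (Lemma~\ref{lem:ignore1}).

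For parity, your proposed flip has a genuine gap. Decreasing the last increment by~$1$ sends $\lw_{n-1}=-1$ to $-2\notin\{0,-1\}$, so the map is undefined on roughly half of the relevant walks; and since the lazy step has $\Prb(\text{step}=0)=\tfrac12$ versus $\Prb(\text{step}=\pm1)=\tfrac14$, swapping a $0$-step for a $\pm1$-step does not preserve the conditional measure, so even where defined, an injection between walks does not translate into an inequality of probabilities. The paper sidesteps all of this by working combinatorially on the sequence side: writing $H(n)$ for the number of odd-sum sequences satisfying~\eqref{eq:DC}, appending $d_{n+1}=0$ injects $G(n)$ into $G(n+1)$ and inserting an extra~$1$ injects $H(n)$ into $G(n+1)$, so
\[
G(n+1)\ \ge\ \max\{G(n),H(n)\}\ \ge\ \tfrac12\bigl(G(n)+H(n)\bigr)\ =\ \Theta(4^n/n^{3/4}).
\]
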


The main ingredient in the proof of the above proposition is the following, simpler statement.
\begin{prop}\label{prop:basic}
 For $(\lw_i)_{i=1}^n$ the lazy SSRW and $\lwa_k=\sum_{i=1}^k\lw_i$ its area process,
 \[
  \Prb\big(\lwa_1,\dots,\lwa_n\ge 0 \mid \lw_{n} =0\big)=\Theta(n^{-1/4}).
 \]
\end{prop}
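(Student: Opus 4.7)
The strategy exploits the observation, noted just after Lemma~\ref{lem:to_walk}, that the area process $\lwa$ is monotone on excursions of $\lw$ away from $0$, so the event $\{\lwa_1,\dots,\lwa_n\ge 0\}$ is equivalent to $\lwa_k\ge 0$ at every zero-crossing of $\lw$ in $[1,n]$. Let $0=\zeta_0<\zeta_1<\zeta_2<\dotsb$ be the successive return times of $\lw$ to $0$, set $\Delta_j=\lwa_{\zeta_j}-\lwa_{\zeta_{j-1}}$ and $B_j=\lwa_{\zeta_j}$. By the strong Markov property the $\Delta_j$ are i.i.d.\ and, by symmetry of the lazy walk, they are symmetric. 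Writing $N=\max\{j:\zeta_j\le n\}$, the conditioning $\lw_n=0$ forces $\zeta_N=n$, so the persistence event reduces to $\{B_1,\dots,B_N\ge 0\}$.

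The plan is then to apply a Sparre--Andersen-type theorem. The conditioning $\lw_n=0$ constrains the excursion lengths to sum to $n$, but preserves the fact that each excursion is positive, negative, or trivial (length $1$) with fixed probabilities $\tfrac14,\tfrac14,\tfrac12$, independently of the lengths and of one another; in particular, for each non-trivial excursion the sign is $\pm1$ uniformly and independently of the absolute area. Hence, conditional on $N=m$ and on $(|\Delta_1|,\dots,|\Delta_m|)$, the signed sequence $(\Delta_1,\dots,\Delta_m)$ is symmetric and exchangeable, and a Sparre--Andersen-type estimate gives
\[
 \Prb(B_1,\dots,B_m\ge 0\mid N=m,(|\Delta_i|)_{i\le m}) = \Theta(m^{-1/2})
\]
uniformly in the area configuration. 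Averaging yields
\[
 \Prb(\lwa_1,\dots,\lwa_n\ge 0\mid \lw_n=0)=\E\bigl[\Theta(N^{-1/2}) \bigm| \lw_n=0\bigr].
\]

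A standard local CLT for the lazy walk then gives that, under the bridge law, $N/\sqrt n$ has a non-degenerate (Rayleigh-type) limit and $\E[N^{-1/2}\mid \lw_n=0]=\Theta(n^{-1/4})$, producing both the upper and lower bounds.

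The main technical obstacle is turning the Sparre--Andersen input into a genuine two-sided $\Theta$ estimate that is uniform over the excursion-area configuration, in particular handling the positive mass at $\Delta_j=0$ coming from length-$1$ trivial excursions; and controlling the tails of $N$ under the bridge (both at $N=O(1)$ and $N=\omega(\sqrt n)$) so that those regimes contribute negligibly to $\E[N^{-1/2}\mid \lw_n=0]$. Everything else is standard strong-Markov and local-CLT bookkeeping.
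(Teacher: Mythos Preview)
The lower bound in your approach is sound and essentially matches what the paper does later for the sharp asymptotics: Lemma~\ref{lem:useful2} gives $\Prb(B_1,\dots,B_m\ge 0\mid N=m)\ge(2m-1)!!/(2^m m!)$, and combined with the Rayleigh limit for $N/\sqrt n$ (Lemma~\ref{lem:convtorayleigh}) and its small-$N$ tail bound, this yields $\Omega(n^{-1/4})$ exactly as in Lemma~\ref{lem:probnonnegperturbed}.

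The upper bound, however, has a genuine gap. A Sparre--Andersen argument does \emph{not} give a uniform $O(m^{-1/2})$ upper bound for $\Prb(B_1,\dots,B_m\ge 0)$ when the $\Delta_j$ are integer-valued: Lemma~\ref{lem:useful2} is an inequality in one direction only, with equality precisely when all subset sums are distinct. The problem is not merely the point mass at $\Delta_j=0$ from trivial excursions (those contribute redundant constraints and can be discarded); even the nonzero excursion areas are integers with many coinciding subset sums, and for such configurations the non-strict persistence probability can exceed $(2m-1)!!/(2^m m!)$ by a nontrivial factor. That a uniform upper bound of the form $O(m^{-1/2})$ should hold over all sign-symmetric exchangeable configurations is exactly Conjecture~\ref{conj:comb} in the paper, which is left open.

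The paper therefore takes an entirely different route for Proposition~\ref{prop:basic}: it invokes Vysotsky's result (Theorem~\ref{thm:vysotksy}) for the \emph{strict} event $\{\lwa_1,\dots,\lwa_n>0\}$, which already gives $\Theta(n^{-1/4})$ since the lazy SSRW is right-continuous, and then relates the strict and non-strict events by the elementary trick of forcing the first two steps to be $+1,-1$, which shifts all partial sums up by one. Your excursion decomposition would recover the correct order only once one has an independent source for the upper bound; on its own it does not supply one.
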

We will prove this proposition using \eqref{eq:useful_lemma_1} and \eqref{eq:useful_lemma_2} from Lemma \ref{lem:useful2}. (It can also be deduced from the more general result \cite[Proposition 1]{Vysotsky2014}.)  Before we get to that, we first use it to prove Proposition~\ref{prop:firstasymptotics}.
We cannot immediately apply it to calculate the order of growth of the number of graphic sequences as we need to condition on the walk ending at either $0$ or~$-1$, not only $0$. The following lemma shows that this change in conditioning only changes the probability by a constant factor.

\begin{lemma}\label{lem:ignore1} 
 For all $n \ge 1$, 
 \[
  \tfrac{1}{2} \le
  \frac{\Prb(\lwa_1,\dots,\lwa_{n}\ge 0 \mid \lw_{n}\in\{0,-1\})}{\Prb(\lwa_1,\dots,\lwa_{n}\ge 0 \mid \lw_{n}=0)}
  \le 1.
 \]
\end{lemma}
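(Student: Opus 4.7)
Write $p_k := \Prb(\lw_n = k)$, $g_k := \Prb(\lwa_1,\dots,\lwa_n \ge 0,\ \lw_n = k)$, and $P_k := g_k/p_k$. The ratio in the lemma decomposes as
\[
 R \;=\; \frac{p_0 P_0 + p_{-1}P_{-1}}{(p_0+p_{-1})\,P_0} \;=\; \frac{p_0}{p_0+p_{-1}} \;+\; \frac{p_{-1}}{p_0+p_{-1}}\cdot\frac{P_{-1}}{P_0}.
\]
The plan is to reduce the lemma to the two clean inequalities
\[
 \textup{(a)}\ \ p_0 \ge p_{-1} \qquad\text{and}\qquad \textup{(b)}\ \ P_{-1} \le P_0.
\]
The upper bound $R\le 1$ follows directly from (b); for the lower bound $R\ge 1/2$, drop the (non-negative) second summand to obtain $R \ge p_0/(p_0+p_{-1})$, which exceeds $1/2$ exactly when (a) holds.

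Claim (a) is a standard consequence of the unimodality of the distribution of $\lw_n$: the one-step distribution $\tfrac14\delta_{-1}+\tfrac12\delta_0+\tfrac14\delta_1$ is symmetric and log-concave, hence so is its $n$-fold convolution, giving $p_0 \ge p_k$ for all $k$. Alternatively, conditioning on the last step yields the one-step identity $p_0(n)-p_{-1}(n) = \tfrac14(p_0(n-1)-p_2(n-1))$ and the claim follows by induction on $n$ with the strengthened hypothesis that $k\mapsto p_k(n)$ is non-increasing on $k\ge 0$.

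Claim (b) is the substantive part. The plan is a path-modification argument. Given a walk $\omega$ of length $n$ with $\lw_n(\omega)=-1$ and $\lwa(\omega)\ge 0$, let $\tau(\omega) := \max\{k\le n : \lw_k(\omega)=0\}$. Right-continuity of the walk (increments in $\{-1,0,1\}$) together with the requirement that $\lw_k\ne 0$ for $k>\tau$ and $\lw_n=-1$ forces $\lw_{\tau+1}(\omega)=-1$ and $\lw_k(\omega)\le -1$ for all $\tau<k\le n$. Replace the $-1$ step at time $\tau+1$ by a lazy step to obtain $\omega'$: then $\lw_k(\omega')=\lw_k(\omega)+\one\{k>\tau\}$, so $\lw_n(\omega')=0$ and $\lwa_k(\omega')\ge \lwa_k(\omega)\ge 0$, placing $\omega'$ in the target set. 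The replacement multiplies path probability by $2$ (lazy mass $\tfrac12$ replaces down mass $\tfrac14$). Combined with the companion one-step bound
\[
 2p_{-1}(n)-p_0(n) \;=\; \tfrac12\bigl(p_{-1}(n-1)+p_2(n-1)\bigr) \;\ge\; 0
\]
(obtained from the same last-step conditioning together with the symmetry $p_{-2}=p_2$), careful accounting of how much probability mass the modification carries yields $g_{-1}\,p_0 \le g_0\,p_{-1}$, i.e., claim (b).

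\textbf{Main obstacle.} The delicate step is converting the path modification into the exact inequality $g_{-1}p_0 \le g_0 p_{-1}$. The map $\omega\mapsto\omega'$ is \emph{not} injective as a map between walks: an image $\omega'$ may admit several ``valid markings''---positions whose lazy step, when undone, yields a valid preimage with $\lw_n=-1$ and $\lwa\ge 0$---so the naive projection gives only a weaker statement in general. Two natural fixes are (i) selecting a canonical marking (e.g.\ the largest valid position in $\omega'$, after which the constraint $\lwa_j(\omega')\ge j-\tau$ for $j>\tau$ forces uniqueness of the corresponding preimage), or (ii) replacing the combinatorial injection by a monotone coupling between the bridges $(\lw\mid\lw_n=0)$ and $(\lw\mid\lw_n=-1)$ under which the area process is pathwise dominated, from which (b) follows by comparing indicator functions. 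Either route requires careful bookkeeping but introduces no essentially new ideas.
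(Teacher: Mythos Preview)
Your decomposition into claims (a) and (b) is exactly the paper's approach, and your argument for (a) (equivalently, for the lower bound) is correct; the paper simply computes $p_0/(p_0+p_{-1})=(n+1)/(2n+1)\ge\tfrac12$ directly from $\Prb(\lw_n=k)=\binom{2n}{n+k}4^{-n}$.

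The gap is in (b). Your path modification is genuinely not injective, and neither of your fixes is carried through. A concrete collision: for $n=6$ take $\omega'$ with step sequence $(1,1,-1,-1,0,0)$, so $\lw'=(0,1,2,1,0,0,0)$ and $\lwa'=(1,3,4,4,4,4)$. Both $\omega_1$ with steps $(1,1,-1,-1,-1,0)$ (so $\tau=4$) and $\omega_2$ with steps $(1,1,-1,-1,0,-1)$ (so $\tau=5$) are good walks ending at $-1$ that map to $\omega'$. Your fix (i) is not well-posed: the map already specifies the marking as $\tau(\omega)+1$, and since both preimages above are legitimate, no canonical choice of marking in $\omega'$ makes the \emph{forward} map injective. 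One could try to argue via a backward map or a double-counting with multiplicities, but the ``careful accounting'' you allude to is not supplied, and there is no obvious reason the multiplicities are controlled.

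Your fix (ii) is the right idea, and it is precisely what the paper does (as its Lemma~\ref{lem:monotoneinY_n}, which gives the full monotonicity $P_k\le P_{k'}$ for $k\le k'$). The coupling is clean once you pass through the standard lazy/SSRW correspondence $\lw_i=\tfrac12\uw_{2i}$: conditioned on $\uw_{2n}=2k$, the increment sequence of $\uw$ is a uniformly random arrangement of $n+k$ copies of $+1$ and $n-k$ copies of $-1$. To couple with the endpoint $2(k+1)$, pick a uniformly random $-1$ and flip it to $+1$; this yields exactly the law conditioned on $\uw_{2n}=2(k+1)$. Under this coupling every increment of $\lw$ is weakly increased (one of them by exactly~$1$), so the event $\{\lwa_1,\dots,\lwa_n\ge 0\}$, which is monotone in the increments, can only become more likely. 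This gives (b) immediately, without any injectivity bookkeeping.
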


The proof of Lemma \ref{lem:ignore1} is elementary, but for the sake of brevity we postpone it to Section~\ref{sec:technical} and skip straight to the proof of Proposition~\ref{prop:firstasymptotics}.

\begin{proof}[Proof of Proposition \ref{prop:firstasymptotics}]
By Lemma~\ref{lem:to_walk}, the probability that a uniformly random sequence
$n-1\ge d_1\ge\dotsb\ge d_n\ge 0$ satisfies the dominating condition \eqref{eq:DC} is equal to
$\Prb(\lwa_1,\dots,\lwa_{n-1}\ge 0 \mid \lw_{n-1}\in\{0,-1\})$, which is $\Theta(n^{-1/4})$ by Proposition~\ref{prop:basic} and
Lemma~\ref{lem:ignore1}. Hence, the number of sequences which satisfy the dominating condition is $\Theta(4^n/n^{3/4})$.

Clearly, $G(n) = O(4^n/n^{3/4})$ is immediate, and we only need to show a corresponding lower bound.
Let $H(n)$ denote the number of sequences
$(d_i)_{i=1}^n$ that satisfy the dominating condition for which $\sum_{i=1}^nd_i$ is odd. The number of
sequences of length $n$ that satisfy the dominating condition is therefore $G(n)+H(n)=\Theta(4^n/n^{3/4})$. 
Each sequence $(d_i)_{i=1}^n$ in $G(n)$ gives rise to a unique sequence in $G(n+1)$ by appending $d_{n+1} = 0$.
Separately, each sequence $(d_i)_{i=1}^n$ in $H(n)$ gives rise to a unique sequence in $G(n+1)$ by adding an extra 1 to the graphic sequence (in the appropriate place). Hence, 
\[
 G(n+1)\ge \max\{G(n),H(n)\}\ge \tfrac12(G(n)+H(n)).
\]
Hence, $G(n) \geq \frac{1}{2} (G(n - 1)+H(n - 1))=\Theta(4^n/n^{3/4})$ as required.
\end{proof}

\begin{proof}[Proof of Proposition~\ref{prop:basic}] 
Recall from Section~\ref{subsec:useful} that $\lbac_1,\dots, \lbac_{N_n}$ is the process with the excursion areas of a lazy simple symmetric random walk bridge of length $n$ as its increments and that 
\[\Prb\left(\lwa_1,\dots, \lwa_n\geq 0\mid \lw_n=0 \right)=\Prb\left(\lbac_1,\dots, \lbac_{N_n}\geq 0\right).\]

Recall that the increments of $\lbac$ are exchangeable and their law is invariant under sign changes so, conditional on the event that $N_n = N$, the probability that $\lbac$ never takes a negative value is at least \[\frac{(2N-1)!!}{2^{N}N!}\]
by \eqref{eq:useful_lemma_1} of Lemma \ref{lem:useful2}. This implies that 
\[\Prb(A_1, \dots, A_n \geq 0 \mid  Y_n = 0) \geq \E\left[\frac{(2N_n-1)!!}{2^{N_n}N_n!}\right].
\]
A little care is needed in computing the expectation and we postpone the proof that 
$\E\left[\frac{(2N_n-1)!!}{2^{N_n}N_n!}\right]
=(1+o(1)) n^{-1/4}\frac{\Gamma(3/4)}{\sqrt{2\pi}}$
to Lemma~\ref{lem:probnonnegperturbed}.

For the upper bound we use \eqref{eq:useful_lemma_2} from Lemma~\ref{lem:useful2}, but we need to use a trick to circumvent the strict inequality in 
the event. Consider the walks of length $n+2$ that begin with an upwards step and then a downwards step.
After these first two steps the walk behaves like a lazy SSRW of length $n$ conditioned to end at~0,
but the area process is one higher. This means the event $\{\sum_{i=1}^k\lw_i>0,\,k=1,\dots,n+2\}$ is exactly
the event $\{\sum_{i=3}^k\lw_i\ge 0,\,k=3,\dots,n+2\}$. A simple calculation shows that the probability
the walk goes up and then down given that $\lw_{n+2} = 0$ is at least $1/16$ for all $n$. It follows that
\[
 \Prb(\lwa_1,\dots,\lwa_{n}\ge 0 \mid \lw_{n}=0)\leq 16 \Prb(\lwa_1,\dots,\lwa_{n+2}>0 \mid \lw_{n+2}=0).
\]
Therefore, if $N_{n+2}$ is the number of times a lazy SSRW of length $n+2$ hits zero when conditioned to end at zero, similar reasoning to before shows that \eqref{eq:useful_lemma_2} from Lemma~\ref{lem:useful2} and Lemma~\ref{lem:probnonnegperturbed} imply that 
 \[\Prb(\lwa_1,\dots,\lwa_{n}\ge 0 \mid \lw_{n}=0)\leq 16 \E\left[\frac{(2N_{n+2}-1)!!}{2^{N_{n+2}} N_{n+2}!}\right] =(1+o(1)) n^{-1/4} \frac{16 \Gamma(3/4)}{\sqrt{2\pi}}.\]
The claimed statement follows. 
\end{proof}

\subsection{Proof of Proposition \ref{prop:areanonneg}}\label{subsec:proofoverview}
 We will use Lemma~\ref{lem:useful2} to get tighter estimates for the probability that $\lbac$
never takes a negative value. The weak inequality (\ref{eq:useful_lemma_1}) from Lemma~\ref{lem:useful2} is only strengthened to the equality \eqref{eq:useful_lemma_3} when all subset sums of elements
in $(\lbae_i:1\le i \le N_n)$ are distinct almost surely. However, this is not the case: for $n\ge 2$, with positive probability, $\lb_0=\lb_1=\lb_2=0$, in which case both $\lbae_1$ and $\lbae_2$ equal $0$. We overcome this issue by
perturbing each $\lbae_i$ by a small random amount.

 To be precise, let $\eps_1,\eps_2,\dots$ be i.i.d.\ 
$\Unif[-\tfrac{1}{2n},\tfrac{1}{2n}]$ random variables and define $\pbae_i=\lbae_i+\eps_i$ and
$\pbac_k=\sum_{i=1}^k \pbae_i$. Then, as the perturbations are independent and  $\eps_i\overset{d}{=} -\eps_i$, the increments of $\pbac$ are exchangeable and their law is invariant under sign changes.  Moreover, since $\Unif[-\tfrac{1}{2n},\tfrac{1}{2n}]$ is non-atomic, for any sequences of real numbers $x_1,\dots,x_N$, the probability that  there are distinct subsets $S,T\subseteq \{1,\dots,N\}$ with $\sum_{i\in S}(x_i+\eps_i) = \sum_{j\in T}(x_j+\eps_j)$ is $0$. This means that conditional on the event $N_n = N$, the equality \eqref{eq:useful_lemma_3} from 
Lemma~\ref{lem:useful2} shows that the probability that  $\pbac_1,\dots,\pbac_N\geq 0$ 
 is exactly 
$
 \frac{(2N-1)!!}{2^{N}N!}.
$
This implies that 
\[
 \Prb\big(\pbac_1,\dots,\pbac_{N_n}\ge 0\big)
 =\E\left[\frac{(2N_n-1)!!}{2^{N_n}N_n!}\right].
\]

We compute the expectation in Lemma~\ref{lem:probnonnegperturbed}, where we show that as $n\to\infty$, 
\[
\E\left[\frac{(2N_n-1)!!}{2^{N_n}N_n!}\right]=(1+o(1))n^{-1/4} \frac{\Gamma(3/4)}{\sqrt{2\pi}}.
\]

Combining the two equations above gives
\begin{equation}\label{eq:asym_perturbed}
 \Prb\big(\pbac_1,\dots,\pbac_{N_n}\ge 0\big) =(1+o(1))n^{-1/4} \frac{\Gamma(3/4)}{\sqrt{2\pi}}.
\end{equation}
Note that by our choice for the law of the perturbations $\{\varepsilon_i\}$ we have $|\sum_{i=1}^k \varepsilon_i|<1$ for all $k$.
Since  $\pbac_k=\lbac_k+\sum_{i=1}^k \varepsilon_i$ and $\lbac$ only takes integer values, this implies that $\lbac_k \geq 0$ whenever $\lbac_k \geq 0$.
Therefore, to compute the probability that $\lbac$ never takes a negative value we can use the following equality
\begin{align*}
 \Prb\big(\pbac_1,\dots,\pbac_{N_n}\ge 0\big)
 &=\Prb\big(\pbac_1,\dots,\pbac_{N_n},\lbac_1,\dots,\lbac_{N_n}\ge 0 \big)
 \\&=\Prb\big(\lbac_1,\dots,\lbac_{N_n}\ge 0\big)
 \Prb\big(\pbac_1,\dots,\pbac_{N_n}\ge 0 \mid \lbac_1,\dots,\lbac_{N_n}\ge 0\big).
\end{align*}
Equation \eqref{eq:asym_perturbed} gives the asymptotic value for $\Prb\big(\pbac_1,\dots,\pbac_{N_n}\ge 0\big)$, so our result will follow by evaluating the limit of
$\Prb(\pbac_1,\dots,\pbac_{N_n}\ge 0\mid\lbac_1,\dots,\lbac_{N_n}\ge 0)$ as $n\to \infty$.
For this, we need a further observation. Again using that $|\sum_{i=1}^k \varepsilon_i|<1$ for all $k$ and that $\lbac$ only takes integer values, $\lbac_k>0$ implies that $\pbac_k>0$. Therefore, for any $k$,
\[\{\pbac_k\ge 0\}=\{\lbac_k>0\}\cup \left\{\lbac_k=0, \sum_{i=1}^k \varepsilon_i\ge 0\right\}.\]
This implies that on the event that $\lbac_1,\dots,\lbac_{N_n}\ge 0$, the event  $\lbac_1,\dots,\lbac_{N_n}\ge 0$ holds if and only if $\sum_{i=1}^k \varepsilon_i\ge0$ for all $k$ for which $\lbac_k=0$. 
Suppose that $\lbac$ is equal to zero $M_n$ times, namely at at $\xi_1,\dots,\xi_{M_n}$, and let us also set $\xi_0 = 0$.
By definition, $\pbac_{\xi_k} = \sum_{i=1}^{\xi_k}\eps_i$, and the  increment between
times $\xi_{k-1}$ and $\xi_k$ is exactly $\eta_k=\sum_{i=\xi_{k-1}+1}^{\xi_k}\eps_i$.

We show that we can apply the equality \eqref{eq:useful_lemma_3} from Lemma~\ref{lem:useful2} to the process $(\pbac_{\xi_k},1\le k \le M_n)$. Firstly, the sequence $(\eta_k)_{1\le k\le M_n}$ is exchangeable as $(\xi_i-\xi_{i-1},1\le i \le M_n)$ is exchangeable and the perturbations $\eps_i$ are i.i.d. Secondly, since $\varepsilon_i\overset{d}{=}-\varepsilon_i$, the law of $(\eta_k)_{1\le k\le M_n}$ is invariant under sign changes of the elements. Finally, we already observed that all subset sums of $\{\varepsilon_1,\dots,\varepsilon_{N_n}\}$ are distinct almost surely.  This means that we can indeed apply the equality \eqref{eq:useful_lemma_3} to  $(\pbac_{\xi_k},1\le k \le M_n)$ conditional on $M_n=M$, that is,  
\[
\Prb(\pbac_{\xi_1},\dots,\pbac_{\xi_{M_n}}\ge 0\mid M_n=M)=\frac{(2M-1)!!}{2^{M}M!}.
\]
From the definition of $(\xi_i)$ as the times $i$ where $\lbac_{i}$ and $\pbac_i$ may differ in signs, it follows that
\[
 \Prb\big(\pbac_1,\dots,\pbac_{N_n}\ge 0\mid \lbac_1,\dots,\lbac_{N_n}\ge 0,\,M_n=M\big)
 =\frac{(2M-1)!!}{2^{M}M!}.
\]
This implies  
\[\Prb\big(\pbac_1,\dots,\pbac_{N_n}\ge 0\mid \lbac_1,\dots,\lbac_{N_n}\ge 0\big)
 =\E\left[\left.\frac{(2M_n-1)!!}{2^{M_n}M_n!}\,\right|\, \lbac_1,\dots,\lbac_{N_n}\ge 0 \right ].\]
What remains is to understand the distribution of $M_n$ conditional on the event that $\{\lbac_1,\dots,\lbac_{N_n}\ge 0\}$. 
In fact, we will show that it converges in distribution to $G\sim\Geom(\rho)$. 
For this it is enough to show that for $\ell\ge 1$,
\begin{equation}\label{eq:ellconvtorho}
 \Prb(M_n\ge \ell+1 \mid \lbac_1,\dots,\lbac_{N_n}\ge 0,\,M_n\ge \ell)\to \rho.
\end{equation}
 Lemma~\ref{lem:pn_to_rho} states that
\[
 \probareazero{n}:= \Prb(M_n\ge 1\mid \lbac_1,\dots,\lbac_{N_n}\ge 0)
\]
converges to $\rho$ as $n\to \infty$. 
Suppose $M_n\ge\ell$ and let $\zeta_\ell$ be the time at which $\lb$ and $\lba$ hit $0$
simultaneously for the $\ell$th time. Then, $\lb$ and $\lba$ restricted to $[\zeta_\ell,n]$ are distributed as a
lazy simple symmetric random walk bridge with $n-\zeta_\ell$ steps and its area process respectively, so 
\[
 \Prb\big(M_n\ge\ell+1 \mid \lbac_1,\dots,\lbac_{N_n}\ge 0,\, M_n\ge\ell,\,\zeta_\ell)
 =\probareazero{n-\zeta_\ell}.
\]
Lemma~\ref{lem:hittingtimeat0} implies that (conditional on the event
$\{\lbac_1,\dots,\lbac_{N_n}\ge 0\}$) it holds that $\zeta_{M_n}=O(n^{-1/2})$ in probability, so $n-\zeta_\ell=(1+o(1)) n$ and \eqref{eq:ellconvtorho} follows from
$\probareazero{n}\to \rho$. This implies that 
\[
 \Prb\big(\pbac_1,\dots,\pbac_{N_n}\ge 0\mid \lbac_1,\dots,\lbac_{N_n}\ge 0\big)
 \to \E\left[\frac{(2G-1)!!}{2^G G!}\right],
\]
and a simple calculation shows that this equals $\sqrt{1-\rho}$. Putting this all together gives
\begin{align*}
 \Prb\big(\lbac_1,\dots,\lbac_{N_n}\ge 0\big)
 &=\frac{\Prb\big(\pbac_1,\dots,\pbac_{N_n}\ge 0\big)}{
  \Prb\big(\pbac_1,\dots,\pbac_{N_n}\ge 0\mid \lbac_1,\dots,\lbac_{N_n}\ge 0\big)}\\
 &=(1+o(1)) \frac{\Gamma(3/4)}{\sqrt{2\pi(1-\rho)}}n^{-1/4},
\end{align*}
as claimed.

\section{The postponed proofs}
\label{sec:technical}
 We will first introduce a coupling between simple symmetric random walks (resp.\ bridges)
and lazy simple symmetric random walks (resp.\ bridges) that turns out to be useful in our proofs.  
Let $(\uw_i)_{i\ge 0}$ be a simple symmetric random walk. Then, if we set $\lw_i=\frac{1}{2}\uw_{2i}$
for each $i$ we see that $(\lw_i)_{i\ge 0}$ has the law of a lazy simple symmetric random walk.
This implies that for $k\in \Z$
\begin{equation}\label{eq:prb_Y}
\Prb(\lw_n=k)=\Prb(\uw_{2n}=2k)=4^{-n}\binom{2n}{n+k}.
\end{equation}
Moreover, since $\uw$ is zero only at even times, the zeroes of $\uw$ are in one-to-one correspondence
with the zeroes of~$\lw$, and in particular, if $(\ub_i)_{0\le i\le 2n}$ is a simple symmetric
random walk bridge and $\lb_i=\frac{1}{2}\ub_{2i}$, we have that $(\lb_i)_{0\le i\le n}$ is a
lazy simple symmetric random walk bridge. 

To prove Lemma~\ref{lem:ignore1} we first show that the probability of event
$\{\lwa_1,\dots,\lwa_{n}\ge 0\}$ is monotone with respect to the value of $\lw_{n}$.
\begin{lemma}\label{lem:monotoneinY_n}
 For all $-n\le k \le k' \le n$,
 \[\textstyle
  \Prb(\lwa_1,\dots,\lwa_{n}\ge 0 \mid \lw_{n} = k)
  \le \Prb(\lwa_1,\dots,\lwa_{n}\ge 0 \mid \lw_{n} = k').
 \]
\end{lemma}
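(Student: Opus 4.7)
The plan is to construct a monotone coupling: for $-n \le k \le k' \le n$, I will build, on a single probability space, versions of the walk conditioned on $\lw_n = k$ and on $\lw_n = k'$ such that the latter dominates the former pointwise. Since $\lwa_i = \sum_{j=1}^i \lw_j$, the pointwise domination of the walks transfers directly to their area processes, and so the event $\{\lwa_i \ge 0 \text{ for all } i \le n\}$ under the $k$-bridge is contained in the corresponding event under the $k'$-bridge; taking probabilities then gives the lemma.

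To construct the coupling I will leverage the correspondence $\lw_i = \tfrac12 \uw_{2i}$ between lazy SSRW bridges of length $n$ and SSRW bridges of length $2n$ introduced at the start of this section. An SSRW bridge of length $2n$ ending at $2m$ is specified by picking its set of up-step positions, a uniformly random subset of $[2n]$ of size $n+m$. So I first sample $T' \subseteq [2n]$ uniformly among subsets of size $n+k'$, and then conditionally sample $T \subseteq T'$ uniformly among subsets of size $n+k$. Let $\uw^{(k)}$ and $\uw^{(k')}$ be the SSRW bridges whose up-step positions are $T$ and $T'$ respectively. The inclusion $T \subseteq T'$ immediately forces $\uw^{(k)}_j \le \uw^{(k')}_j$ for every $j \le 2n$, and hence $\lw^{(k)}_i = \uw^{(k)}_{2i}/2 \le \uw^{(k')}_{2i}/2 = \lw^{(k')}_i$ for every $i \le n$.

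The one thing to check is that the chained sampling gives $T$ the correct marginal, namely uniform over size-$(n+k)$ subsets of $[2n]$. This reduces to a one-line count: for any fixed size-$(n+k)$ subset $T_0 \subseteq [2n]$, the number of its size-$(n+k')$ supersets equals $\binom{n-k}{k'-k}$, which is independent of $T_0$, and so $\Prb(T = T_0) = \binom{n-k}{k'-k}/\bigl(\binom{2n}{n+k'}\binom{n+k'}{n+k}\bigr) = 1/\binom{2n}{n+k}$ is constant in $T_0$. With both marginals correct and the pointwise domination established, summing in $i$ yields $\lwa^{(k)}_i \le \lwa^{(k')}_i$ for all $i$, which gives the claimed inequality. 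There is no serious obstacle here: the whole argument reduces to picking the right coupling, with the remaining verifications being standard.
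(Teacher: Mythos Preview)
Your proof is correct and takes essentially the same approach as the paper: both use the coupling $\lw_i=\tfrac12\uw_{2i}$ and the representation of the SSRW bridge by its (uniformly random) set of up-step positions to build a monotone coupling. The only cosmetic difference is that the paper reduces to the case $k'=k+1$ and flips one random $-1$ step of $\uw$ to $+1$, whereas you handle general $k\le k'$ in one shot by sampling nested up-step sets $T\subseteq T'$; the resulting pointwise domination $\lw^{(k)}\le\lw^{(k')}$ and the conclusion are identical.
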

\begin{proof}
It suffices to prove the claim when $k' = k+1$. We will use the coupling between the SSRW and the lazy SSRW introduced at the beginning of this section. Consider the SSRW $\uw=(\uw_k)_{i=1}^{2n}$ conditioned on the event $\uw_{2n}=2k$, and define $\lw$ by $\lw_i:=\tfrac12\uw_{2i}$. Then $\lw$ is distributed as a lazy SSRW with $n$ steps conditioned on the event $\lw_{n}=k$. 

Now, observe that the increments of $\uw$ conditioned on $\uw_{2n}=2k$ are distributed as a uniform ordering of $n+k$ instances of $1$ and $n-k$ instances of $-1$. We consider a modified sequence obtained from the original sequence by picking a uniformly random $-1$ and changing it to a $+1$, so that the modified sequence has the law of the increments of a SSRW of $2n$ steps conditioned to end in $2k+2$, which, again via the coupling, corresponds to a lazy SSRW of $n$ steps conditioned to end in $k+1$. We see that our modification increased one increment of $\lw$ by $1$, and since the event $\{\lwa_1,\dots,\lwa_n\geq 0\}$ is increasing in the increments of $\lw$, the result follows. \end{proof}

\begin{proof}[Proof of Lemma~\ref{lem:ignore1}]
We abbreviate $\event=\{\lwa_1,\dots,\lwa_{n}\ge 0\}$, and start with the lower bound. We use \eqref{eq:prb_Y}.
\begin{align*}
 \Prb(\event \mid \lw_{n}\in\{0,-1\})
 &=\frac{\Prb(\event \mid \lw_{n} = 0)\Prb(\lw_{n} = 0)
 + \Prb(\event \mid \lw_{n} = -1)\Prb(\lw_{n}= -1)}{\Prb(\lw_{n} \in \{0,-1\})}\\
 &\ge \frac{\Prb(\event \mid \lw_{n} = 0)\binom{2n}{n}}{\binom{2n+1}{n}}\\
 &=\tfrac{n+1}{2n+1} \Prb(\event \mid \lw_{n} = 0)\\
 &\ge \tfrac{1}{2} \Prb(\event \mid \lw_{n} = 0).
\intertext{We now turn to the upper bound, for which we use Lemma~\ref{lem:monotoneinY_n}. }
 \Prb(\event \mid \lw_{n}\in\{0,-1\})
 &=\frac{\Prb(\event \mid \lw_{n} = 0)\Prb(\lw_{n} = 0)
 + \Prb(\event \mid \lw_{n} = -1)\Prb(\lw_{n}= -1)}{\Prb(\lw_{n} \in \{0,-1\})}\\
 &\le \frac{\Prb(\event \mid \lw_{n} = 0)(\Prb(\lw_{n} = 0) + \Prb(\lw_{n} = -1))}{\Prb(\lw_{n} \in \{0,-1\})}\\
 &=\Prb(\event \mid \lw_{n} = 0).\qedhere
\end{align*}
\end{proof}

\subsection{Lemmas for Proposition~\ref{prop:areanonneg}}

As before, let  $N_n=|\{1\leq i\leq n:\lb_i=0\}|$ be the number of returns to $0$ of $\lb$ up to time~$n$.
We have the following lemma. 

\begin{lemma}\label{lem:convtorayleigh}
 As $n\to\infty$, $n^{-1/2}N_n\overset{d}{\to} 2\sqrt{E}$ where $E\sim\Exp(1)$ has a standard
 exponential distribution. Moreover, for any $\gamma\ge0$, 
 \[
  \Prb\big(N_n<\gamma n^{1/2}\big)\le \tfrac{\gamma^2}{2}.
 \]
\end{lemma}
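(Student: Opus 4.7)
The plan is to use the coupling $\lb_i=\tfrac{1}{2}\ub_{2i}$ introduced at the start of the section: since $\ub$ (a SSRW bridge of length $2n$) is zero only at even times, the zeros of $\lb$ in $\{1,\dots,n\}$ correspond bijectively to the zeros of $\ub$ in $(0,2n]$, and hence $N_n$ equals the total number of returns of the SSRW bridge to zero on $(0,2n]$.

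For the convergence in distribution, I would combine Donsker's invariance principle for bridges with the continuity of the local-time functional at $0$: this yields $N_n/\sqrt{2n}\overset{d}{\to}L^{br}_0(1)$, the local time at $0$ of a standard Brownian bridge at time~$1$, which is $\Ray(1)$-distributed (density $xe^{-x^2/2}$ on $(0,\infty)$). Multiplying by $\sqrt{2}$ gives $n^{-1/2}N_n\overset{d}{\to}\sqrt{2}\cdot\Ray(1)\overset{d}{=}2\sqrt{E}$ with $E\sim\Exp(1)$, as required.

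For the quantitative bound, I would use the exact distribution of $N_n$ via excursion decomposition. A SSRW bridge of length $2n$ with $k$ returns to zero in $(0,2n]$ splits into $k$ signed excursions of even lengths summing to $2n$; counting positive excursions of length $2\ell$ by the Catalan number $C_{\ell-1}$ and using the generating-function identity $[x^{n-k}]C(x)^k=\tfrac{k}{2n-k}\binom{2n-k}{n}$ gives
\[
 \Prb(N_n = k)=\frac{k\cdot 2^k}{(2n-k)\binom{2n}{n}}\binom{2n-k}{n}, \qquad 1\le k\le n.
\]
Since $\tfrac{2^k\binom{2n-k}{n}}{\binom{2n}{n}}=\prod_{i=0}^{k-1}\tfrac{2(n-i)}{2n-i}\le 1$, each term is at most $k/(2n-k)$, and summing these over $k<\gamma\sqrt n$, using $2n-k\ge 2n-K^*$ for $K^*$ the largest integer less than $\gamma\sqrt n$, together with a short case analysis on the fractional part of $\gamma\sqrt n$, gives the claimed bound $\gamma^2/2$.

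The main technical subtlety is pinning down the exact constant $1/2$ uniformly: the per-term bound $k/(2n-k)$ only yields $\gamma^2/4$ in the limit, leaving some slack, but passing cleanly from that continuous estimate to a discrete sum takes care. Conceptually, the more substantive ingredient is the convergence in distribution, which relies on the classical but non-elementary continuity of the local-time functional; an alternative would be to extract both the limit and the tail bound directly from the explicit formula above by careful Stirling asymptotics.
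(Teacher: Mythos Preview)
Your approach differs from the paper's on both parts. The paper obtains everything from a single closed form for the tail,
\[
\Prb(N_n\ge k)=\frac{2^k\binom{2n-k}{n}}{\binom{2n}{n}}=\prod_{i=1}^{k-1}\Big(1-\frac{i}{2n-i}\Big),
\]
proved by a short bijection (flip the last $k$ excursions of the SSRW bridge positive and delete their final steps). The Rayleigh limit follows by taking logarithms, $\log\Prb(N_n\ge t\sqrt n)=-t^2/4+o(1)$, and the uniform bound by $\prod(1-a_i)\ge 1-\sum a_i\ge 1-\tfrac{k(k-1)}{2(2n-k)}\ge 1-\tfrac{k^2}{2n}$. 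You in fact write down this very product on your way to the PMF, so the paper's argument was one line away.

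For the convergence, invoking Donsker together with ``continuity of the local-time functional'' is not quite right: local time at a fixed level is \emph{not} a continuous functional of the path in the uniform or Skorokhod topology, so the continuous-mapping theorem does not apply directly. The convergence of rescaled zero counts of a bridge to Brownian-bridge local time is a genuine (and classical) theorem, but it is a much heavier tool than two lines of log-asymptotics from the explicit product.

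For the tail bound there is a real gap. Your per-term estimate $\Prb(N_n=k)\le k/(2n-k)$ is correct, but summing over $k\le K^*$ gives at best $\tfrac{K^*(K^*+1)}{2(2n-K^*)}$, which is asymptotically $\gamma^2/4$ yet need not be $\le\gamma^2/2$ uniformly: the obstruction is the $+1$ in $K^*(K^*+1)$ together with the $2n-K^*$ in the denominator, and no ``fractional part'' case analysis on $\gamma\sqrt n$ addresses that. Bounding the tail product from below rather than summing PMF upper bounds---which you already have the ingredients for---is both simpler and avoids the issue entirely.
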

\begin{proof}
We fix an $n$ and a $k\le n$. We will count the number of Bernoulli bridges with $2n$ steps and at least $k$ returns to $0$ (i.e.\ at least $k$ excursions away from~$0$). For a Bernoulli bridge with $2n$ steps
and at least $k$ excursions, flip the last $k$ excursions so that they are positive. Then remove the
last step of each of the last $k$ excursions. Each of the removed steps was downward, so we now obtain
a path of length $2n-k$ that ends at level~$k$. We can recover the original position of the removed steps:
the $i$th removed step should be included after the last time the path is at level~$i$. Therefore,
each path of length $2n-k$ that ends at level $k$ corresponds to $2^k$ bridges with more than $k$ zeroes,
so the number of bridges with $2n$ steps and more than $k$ zeroes equals $2^k\binom{2n-k}{n}$. Thus
the probability that a simple symmetric random walk bridge with $2n$ steps returns to $0$ at least $k$ times equals 
\[
 \frac{2^k\binom{2n-k}{n}}{\binom{2n}{n}}
 =\frac{2^k n(n-1)\dotsm(n-k+1)}{(2n)(2n-1)\dotsm(2n-k+1)}
 =\prod_{i=1}^{k-1}\left(1-\frac{i}{2n-i}\right).
\]

By the coupling between a lazy random walk bridge on $[n]$ and a simple symmetric random walk bridge
on $[2n]$ that preserves the number of zeroes, we then see that for $k=O(\sqrt{n})$,
\[
 \log\Prb(N_n\geq k)=\log\prod_{i=1}^{k-1}\left(1-\frac{i}{2n-i}\right)
 =-\sum_{i=1}^{k-1}\Big(\frac{i}{2n}+O(i^2/n^2)\Big)=-\frac{k^2}{4n}+O(n^{-1/2}),
\]
so $\Prb(N_n\geq tn^{1/2})\to e^{-t^2/4}=\Prb(E>t^2/4)=\Prb(2\sqrt{E}>t)$ where $E\sim\Exp(1)$ has
an exponential distribution.
We also see that, for $0\le k\le n$,
\[
 \Prb(N_n\geq k)=\prod_{i=1}^{k-1}\left(1-\frac{i}{2n-i}\right)
 \ge 1-\sum_{i=1}^{k-1}\frac{i}{2n-i}
 \ge 1-\frac{k(k-1)/2}{2n-k}\ge 1-\frac{k^2}{2n}.
\]
Hence, as the above inequality is trivially true for $k>n$,
\[
 \Prb\big(N_n< \gamma n^{1/2}\big)=1-\Prb\big(N_n\geq \lfloor\gamma n^{1/2}\rfloor\big)
 \le \frac{\lfloor\gamma n^{1/2}\rfloor^2}{2n}\le \frac{\gamma^2}{2}.
 \qedhere
\]
\end{proof}

We use Lemma~\ref{lem:convtorayleigh} to prove the following lemma which considers the asymptotic value of
$\Prb(\pbac_1,\dots,\pbac_{N_n}\ge 0)=\E\big[\frac{(2N_n-1)!!}{2^{N_n}N_n!}\big]$.

\begin{lemma}\label{lem:probnonnegperturbed}
As $n\to\infty$,
\[
 \E\left[\frac{(2N_n-1)!!}{2^{N_n}N_n!}\right]=(1+o(1))n^{-1/4} \frac{\Gamma(3/4)}{\sqrt{2\pi}}.
\]
\end{lemma}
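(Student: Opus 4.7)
The plan is to pinch $\frac{(2N_n-1)!!}{2^{N_n}N_n!}$ between the two Wallis bounds in \eqref{e:ulbounds}, rescale so that $N_n$ is replaced by $W_n := N_n/\sqrt n$, and upgrade the distributional convergence of $W_n$ from Lemma~\ref{lem:convtorayleigh} to convergence of $\E[W_n^{-1/2}]$ via a uniform integrability argument.

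Since $\lb_n = 0$ forces $N_n \ge 1$, applying \eqref{e:ulbounds} with $N_n$ in place of $n$ and multiplying through by $n^{1/4}$ gives
\[
\frac{1}{\sqrt{\pi(W_n + 1/(2\sqrt n))}} \le n^{1/4}\,\frac{(2N_n-1)!!}{2^{N_n}N_n!} \le \frac{1}{\sqrt{\pi W_n}}.
\]
Taking expectations, the claim reduces to showing $\E[W_n^{-1/2}] \to \E[W^{-1/2}]$, where $W := 2\sqrt E$ with $E \sim \Exp(1)$ is the limit in Lemma~\ref{lem:convtorayleigh} (the $1/(2\sqrt n)$ correction in the lower bound vanishes in the limit once uniform integrability is in hand). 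A direct gamma integral then gives $\E[W^{-1/2}] = 2^{-1/2}\E[E^{-1/4}] = 2^{-1/2}\Gamma(3/4)$, and dividing by $\sqrt\pi$ produces the constant $\Gamma(3/4)/\sqrt{2\pi}$ in the statement.

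The main step, and the main obstacle, is uniform integrability of $\{W_n^{-1/2}\}_{n \ge 1}$, which reduces to $\sup_n \E[W_n^{-1/2}\one\{W_n < \delta\}] \to 0$ as $\delta \to 0$. Combining the layer-cake formula with the tail bound $\Prb(W_n < \gamma) \le \gamma^2/2$ from Lemma~\ref{lem:convtorayleigh} yields
\[
\E\bigl[W_n^{-1/2}\one\{W_n < \delta\}\bigr] = \int_0^\infty \Prb\bigl(W_n < \min\{t^{-2}, \delta\}\bigr)\,dt \le \frac{\delta^{3/2}}{2} + \int_{\delta^{-1/2}}^\infty \frac{dt}{2 t^4} = \frac{2}{3}\delta^{3/2},
\]
which tends to $0$ uniformly in $n$. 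Together with the trivial bound $W_n^{-1/2} \le \delta^{-1/2}$ on $\{W_n \ge \delta\}$, this gives uniform integrability, and since $W$ has a continuous distribution the standard fact that distributional convergence plus uniform integrability implies convergence in mean finishes the proof. The quadratic tail $\gamma^2/2$ is exactly what is needed: the integral $\int_0 \gamma^{-1/2} d\Prb(W_n < \gamma)$ is finite uniformly precisely because $\gamma^2$ beats $\gamma^{1/2}$ near $0$ by the margin required.
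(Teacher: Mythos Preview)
Your proof is correct and follows essentially the same route as the paper: both arguments use the Wallis bounds \eqref{e:ulbounds} to replace the double factorial by $(\pi N_n)^{-1/2}$, invoke the tail estimate $\Prb(N_n<\gamma n^{1/2})\le\gamma^2/2$ from Lemma~\ref{lem:convtorayleigh} to control the contribution from small $N_n$, and use the distributional convergence $n^{-1/2}N_n\Rightarrow 2\sqrt{E}$ for the main term. The only difference is packaging: the paper splits explicitly at $N_n=\delta n^{1/2}$ and handles the small part by a dyadic decomposition, whereas you phrase the same estimate as uniform integrability of $W_n^{-1/2}$ via the layer-cake formula, which is arguably a little cleaner.
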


\begin{proof}
Fix some $\delta \in (0,1)$. We split the expectation according to the contribution
with $N_n<\delta n^{1/2}$ and $N_n\ge \delta n^{1/2}$.
By Lemma~\ref{lem:convtorayleigh}, $\Prb(N_n<\delta n^{1/2})\le \delta^2/2$ for all~$n$ and
we also recall from \eqref{e:ulbounds} that 
\[
 \frac{1}{\sqrt{\pi (m+1/2)}}\le \frac{(2m-1)!!}{2^m m!} \le \frac{1}{\sqrt{\pi m}}.
\]
Hence,
\begin{align*}
 n^{1/4}\E\left[\frac{(2N_n-1)!!}{2^{N_n}N_n!}\one_{\{N_n<\delta n^{1/2}\}}\right]
 &\le n^{1/4}\sum_{i=0}^\infty
  \Prb\big(2^{-i-1}\delta n^{1/2}\le N_n<2^{-i}\delta n^{1/2}\big)\frac{1}{\sqrt{\pi 2^{-i-1}\delta n^{1/2}}}\\
 &\le \sum_{i=0}^\infty \tfrac12(2^{-i}\delta)^2\cdot (\pi 2^{-i-1}\delta)^{-1/2}=O(\delta^{3/2}).
\end{align*}

Since $n^{1/4}/\sqrt{\pi N_n}$ is bounded for $N_n\ge \delta n^{1/2}$
and $n^{-1/2}N_n$ converges in distribution to $2\sqrt{E}$ where $E\sim\Exp(1)$,
\[
 n^{1/4}\E\left[\frac{(2N_n-1)!!}{2^{N_n}N_n!}\one_{\{N_n\ge \delta n^{1/2}\}}\right]
 =\E\left[\frac{n^{1/4}}{\sqrt{\pi N_n+O(1)}}\one_{\{n^{-1/2}N_n \ge \delta\}}\right]
 \to\E\left[\frac{\one_{\{2\sqrt{E}\ge \delta\}}}{\sqrt{2\pi\sqrt E}}\right]
\]
as $n\to\infty$. Now
\[
 \E\left[\frac{\one_{\{2\sqrt{E}\ge \delta\}}}{\sqrt{2\pi\sqrt E}}\right]
 =\E\left[\frac{\one_{\{E\ge \delta^2/4\}}}{\sqrt{2\pi}E^{1/4}}\right]
 =\frac{1}{\sqrt{2\pi}} \int_{\delta^2/4}^\infty x^{-1/4}e^{-x} dx
 =\frac{\Gamma(3/4)}{\sqrt{2\pi}}+O(\delta^{3/2}).
\]
Hence,
\[
 n^{1/4}\E\left[\frac{(2N_n-1)!!}{2^{N_n}N_n!}\right]
 \to \frac{\Gamma(3/4)}{\sqrt{2\pi}}+O(\delta^{3/2})
\]
as $n\to\infty$. As $\delta$ was arbitrary, the result now follows.
\end{proof}

We now want to show that
\[
 \Prb\left(\exists k \in (n^{1/2},n]: \lba_k=\lb_k=0 \mid \lba_1,\dots,\lba_n\ge 0\right)=o(1),
\]
which is the content of Lemma \ref{lem:hittingtimeat0}. For the proof of this, we will need Lemma \ref{lem:locallimitlazy}, which is a local limit theorem for the position and area of a lazy SSRW, and Lemma \ref{lem:upperboundintegral0}, which we will use to control the probability that the integral and position of an unconditioned lazy SSRW hit $0$ simultaneously at a late time. 

\begin{restatable}{lemma}{locallimitlazy}\label{lem:locallimitlazy}
We have 
\[
 \lim_{n\to \infty}\sup_{a,b}\,\left|n^2 \Prb\left(\lw_n=a, \lwa_n=b\right)-\phi(n^{-1/2}a,n^{-3/2}b)\right|=0
\]
where the supremum runs over all $(a,b)\in \Z^2$ and
\[
 \phi(x,y)=\frac{2\sqrt{3}}{\pi}\exp\big(-4x^2+12xy-12y^2\big).
\]
\end{restatable}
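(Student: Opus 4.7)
The plan is a standard Fourier-inversion proof of a local CLT for the joint law of $(\lw_n,\lwa_n)$. Writing $\lwa_n = \sum_{i=1}^n (n-i+1)\xi_i$, where the $\xi_i$ are i.i.d.\ lazy increments with $\E[e^{i\theta \xi_i}] = \cos^2(\theta/2)$, the joint characteristic function factorises as
$$\phi_n(s,t) := \E\bigl[e^{i(s\lw_n+t\lwa_n)}\bigr] = \prod_{k=1}^{n}\cos^2\!\bigl((s+kt)/2\bigr).$$
Since $(\lw_n,\lwa_n)\in\Z^2$, Fourier inversion over $[-\pi,\pi]^2$ combined with the scaling $s=u/\sqrt n$, $t=v/n^{3/2}$ gives
$$n^2\,\Prb(\lw_n=a,\lwa_n=b) = \frac{1}{(2\pi)^2}\iint_{R_n} e^{-i(ux+vy)}\,\phi_n\!\bigl(u/\sqrt n,\,v/n^{3/2}\bigr)\,du\,dv,$$
with $R_n=[-\pi\sqrt n,\pi\sqrt n]\times[-\pi n^{3/2},\pi n^{3/2}]$ and $x=a/n^{1/2}$, $y=b/n^{3/2}$. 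The task is to show that this converges, uniformly in $(a,b)\in\Z^2$, to
$$\frac{1}{(2\pi)^2}\iint_{\R^2} e^{-i(ux+vy)}\exp\!\bigl(-\tfrac12(u,v)\Sigma(u,v)^{\!\top}\bigr)\,du\,dv = \phi(x,y),$$
where $\Sigma=\bigl(\begin{smallmatrix}1/2 & 1/4\\ 1/4 & 1/6\end{smallmatrix}\bigr)$ is the limiting covariance of the rescaled pair; the explicit formula for $\phi$ then follows from $\det\Sigma=1/48$ and Gaussian Fourier inversion. Uniformity in $(a,b)$ is automatic since $|e^{-i(ux+vy)}|=1$, so it suffices to prove $L^1$-convergence of $\phi_n(u/\sqrt n,v/n^{3/2})$ to the limiting Gaussian on $\R^2$.

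Split the integration region into a central part $\{|u|,|v|\le M\}$ and its complement. On the central part, $|s+kt|\le 2M/\sqrt n\to 0$ uniformly in $k\le n$, and the Taylor expansion $\log\cos^2(\theta/2) = -\theta^2/4+O(\theta^4)$ yields
$$\log\phi_n(s,t) = -\tfrac14\bigl(ns^2 + n(n+1)st + \tfrac{n(n+1)(2n+1)}{6}\,t^2\bigr) + o(1) \longrightarrow -\tfrac14\bigl(u^2+uv+\tfrac{v^2}{3}\bigr),$$
using $\sum_{k=1}^n k = n(n+1)/2$ and $\sum_{k=1}^n k^2 = n(n+1)(2n+1)/6$; this limit coincides with $-\tfrac12(u,v)\Sigma(u,v)^{\!\top}$. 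Since $|\phi_n|\le 1$, dominated convergence gives pointwise-then-$L^1$ convergence on $[-M,M]^2$, and the truncation error in the limiting Gaussian integral vanishes as $M\to\infty$.

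The main obstacle is the tail estimate
$$\iint_{R_n\setminus[-M,M]^2}\bigl|\phi_n(u/\sqrt n, v/n^{3/2})\bigr|\,du\,dv\;\longrightarrow\;\varepsilon(M)\text{ with }\varepsilon(M)\to 0\text{ as }M\to\infty.$$
Using $\cos^2(\theta/2)\le \exp(-c\,\|\theta\|_{2\pi}^2)$, where $\|\cdot\|_{2\pi}$ denotes distance to the nearest multiple of $2\pi$, this reduces to a lower bound on $\sum_{k=1}^n\|s+kt\|_{2\pi}^2$. I would partition $[-\pi,\pi]^2$ by the size of $|t|$: in the regime $|t|\lesssim 1/n$ the progression $\{s+kt\}_{k\le n}$ is nearly constant, so $\sum_k\|s+kt\|_{2\pi}^2\gtrsim n s^2$ and the decay is controlled by $u^2$; in the regime $|t|\gtrsim 1/n$ the progression equidistributes on a non-trivial arc of $\R/2\pi\Z$, giving $\sum_k\|s+kt\|_{2\pi}^2\gtrsim n^3 t^2\wedge n$; and a few isolated `resonant' points (such as $t=\pm\pi$, where many factors vanish exactly) are handled separately. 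Together these produce a bound of the form $|\phi_n|\le \exp(-c(u^2\wedge u) - c(v^2/n\wedge\sqrt n))$ which is uniformly integrable in $n$ and whose integral outside $[-M,M]^2$ tends to $0$ with $M$. A cleaner alternative, if available, is to invoke an existing local CLT for the joint $(S_{2n},\sum_{k=0}^{2n-1}S_k)$ for the non-lazy SSRW via the coupling $\lw_i = \tfrac12\uw_{2i}$ used in Section~\ref{sec:technical}, which transports the result to the lazy setting after a short book-keeping computation.
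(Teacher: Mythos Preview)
Your approach is the same as the paper's: Fourier inversion, the factorisation $\phi_n(s,t)=\prod_k\cos^2((s+kt)/2)$, Taylor expansion near the origin, and a tail estimate. The central computation and the identification of $\Sigma$ are correct.

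The gap is in the tail bound. Your stated envelope $|\phi_n|\le \exp\bigl(-c(u^2\wedge u)-c(v^2/n\wedge\sqrt n)\bigr)$ is wrong and would not give uniform integrability: with $t=v/n^{3/2}$ one has $n^3t^2=v^2$, not $v^2/n$, so the exponent in the $v$-variable should be of order $v^2$ (up to the regime where the cosines are simply bounded away from $1$). As written, $\int e^{-cv^2/n}\,dv\asymp\sqrt n$, which blows up. Relatedly, the sentence ``$\sum_k\|s+kt\|_{2\pi}^2\gtrsim ns^2$ when $|t|\lesssim 1/n$'' ignores the $v$-contribution: the quadratic form $ns^2+n^2st+\tfrac{n^3}{3}t^2$ is positive definite and already dominates $c(u^2+v^2)$ throughout that regime. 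The paper avoids the double limit $n\to\infty$ then $M\to\infty$ by taking a \emph{growing} central region $T_3=\{|t_1|+n|t_2|<n^{-1/3}\}$ (i.e.\ $|u|+|v|<n^{1/6}$), on which the $O(\theta^4)$ error is $o(1)$; an intermediate shell $T_2=\{n^{-1/3}\le |t_1|+n|t_2|<\pi\}$, where $\cos^2x\le e^{-x^2}$ and the exact identity $\sum_{k}(t_1+kt_2)^2=n(t_1+\tfrac{n+1}{2}t_2)^2+\tfrac{n(n^2-1)}{12}t_2^2=\Omega(n^{1/3})$ gives $|\phi_n|\le e^{-\Omega(n^{1/3})}$; and an outer region $T_1=\{|t_1|+n|t_2|\ge\pi\}$, where the clean observation is that at least half of the factors satisfy $|\cos((t_1+kt_2)/2)|\le c<1$, so $|\phi_n|\le c^n$. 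This three-region split is exactly the ``equidistribution'' heuristic you sketch, made precise.

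One further caution: your proposed shortcut via the coupling $\lw_i=\tfrac12\uw_{2i}$ does not transport the local limit theorem directly, because $\lwa_n=\tfrac12\sum_{i\le n}\uw_{2i}$ is not a simple function of the SSRW area $\sum_{j\le 2n}\uw_j$; some genuine reworking (as the paper does, following \cite{AurzadaDereichLifshits2014}) is needed.
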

The proof of this lemma is postponed to Appendix \ref{app:locallimitlazy}. 

\begin{restatable}{lemma}{upperboundintegral}\label{lem:upperboundintegral0}
 There exists a constant $C$ such that for all\/~$n\ge1$,
 \[
 \Prb\left(\lwa_n=\lw_n=0,\,\lwa_1,\dots,\lwa_n\ge 0\right)\le Cn^{-5/2}.
 \]
\end{restatable}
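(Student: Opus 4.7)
The plan is to split the walk at its midpoint and bound the two halves separately, combining Lemma~\ref{lem:locallimitlazy} (the local limit theorem) with the Sinai--Vysotsky persistence bound $\Prb(\lwa_1,\dots,\lwa_m\ge 0)=O(m^{-1/4})$. The key insight is that by carefully controlling \emph{both} halves---one by persistence, the other by a uniform pointwise bound---we extract an extra factor of $n^{-1/4}$ beyond the trivial bound $\Prb(\lw_n=\lwa_n=0)=O(n^{-2})$ that already follows from Lemma~\ref{lem:locallimitlazy}.

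Concretely, set $m=\floor{n/2}$ and $L=n-m$. By the Markov property at time~$m$,
\[
\Prb(\event)=\sum_{a\in\Z,\,b\ge 0}P_1(a,b)\,P_2(a,b),
\]
where $P_1(a,b):=\Prb(\lw_m=a,\lwa_m=b,\lwa_1,\dots,\lwa_m\ge 0)$ and $P_2(a,b):=\Prb(\lw_n=\lwa_n=0,\lwa_{m+1},\dots,\lwa_n\ge 0\mid \lw_m=a,\lwa_m=b)$. The aim is to prove the uniform bound $\max_{a,b}P_2(a,b)\le C n^{-9/4}$. Combined with $\sum_{a,b}P_1(a,b)=\Prb(\lwa_1,\dots,\lwa_m\ge 0)=O(m^{-1/4})$, this yields $\Prb(\event)\le \max P_2\cdot\sum P_1=O(n^{-5/2})$ as required.

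To bound $\max P_2$, I would first apply time-reversal: setting $Y'_k:=\lw_{n-k}$ (which is again a lazy SSRW by symmetry) and using $\lw_n=\lwa_n=0$ to verify the identity $\lwa_{n-k}=-A'_{k-1}$ with $A'_k:=\sum_{j=1}^k Y'_j$, the event defining $P_2$ transforms---after flipping signs and separating the final step of the walk---into a forward persistence-type event. This gives
\[
P_2(a,b)\le \max_{j\in\{-1,0,1\}}\widetilde P^{(L-1)}(-a+j,\,b), \qquad \widetilde P^{(t)}(a,b):=\Prb(\lw_t=a,\lwa_t=b,\lwa_1,\dots,\lwa_t\ge 0).
\]
It then suffices to show $\widetilde P^{(t)}(a,b)\le C t^{-9/4}$ uniformly. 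Splitting \emph{again} at $t/2$, I would write $\widetilde P^{(t)}(a,b)$ as a sum over $(a',b')$ of $\widetilde P^{(t/2)}(a',b')$ times the conditional probability $\Prb(\lw_t=a,\lwa_t=b,\lwa_{t/2+1},\dots,\lwa_t\ge 0\mid \lw_{t/2}=a',\lwa_{t/2}=b')$. Dropping the persistence constraint on the second sub-half bounds the latter by the transition probability $\Prb(\lw_t=a,\lwa_t=b\mid \lw_{t/2}=a',\lwa_{t/2}=b')\le C/t^2$, uniformly in $(a,b,a',b')$, via Lemma~\ref{lem:locallimitlazy} applied to the shifted fresh walk after time $t/2$. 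The summed first factor equals $\Prb(\lwa_1,\dots,\lwa_{t/2}\ge 0)=O(t^{-1/4})$, and the product gives $\widetilde P^{(t)}(a,b)=O(t^{-9/4})$.

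The main technical delicacy is the time-reversal identification: one must carefully track how the constraints $\{\lwa_{m+1},\dots,\lwa_n\ge 0,\,\lw_n=\lwa_n=0\}$ transform under reversing time and flipping signs, and separate out the final step of the walk (which is not constrained by the persistence event but which fixes $\lw_n=0$) from the persistence event on the interior. The rest is a clean combination of the Markov property, the uniform pointwise bound from Lemma~\ref{lem:locallimitlazy}, and the Sinai--Vysotsky persistence bound.
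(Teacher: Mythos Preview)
Your proposal is correct and uses the same ingredients as the paper's proof: the persistence bound $\Prb(\lwa_1,\dots,\lwa_m\ge 0)=O(m^{-1/4})$, time-reversal via the adjoint process, and the uniform local-limit bound $\sup_{a,b}\Prb(\lw_k=a,\lwa_k=b)=O(k^{-2})$ from Lemma~\ref{lem:locallimitlazy}. The only difference is organizational: rather than two nested binary splits with an intermediate time-reversal, the paper splits once into three pieces at $b=\lfloor n/4\rfloor$ and $n-b$, observing that on $\{\lw_n=\lwa_n=0\}$ the adjoint walk $\adjlw_i=\lw_n-\lw_{n-i}$ satisfies $\adjlwa_k=\lwa_{n-k}$, so the persistence events on the first and last $b$ steps are \emph{independent} events on disjoint blocks of increments, each of probability $\Theta(n^{-1/4})$; conditioning on both, the event $\{\lw_n=\lwa_n=0\}$ becomes a pointwise constraint on the fresh middle segment of length $n-2b\ge n/2$, bounded uniformly by $O(n^{-2})$. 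This yields $O(n^{-1/4})\cdot O(n^{-1/4})\cdot O(n^{-2})=O(n^{-5/2})$ directly, avoiding your second split and the final-step separation bookkeeping.
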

With Lemma~\ref{lem:locallimitlazy} in hand, the proof of Lemma~\ref{lem:upperboundintegral0} is a direct
adaptation of the proof of the upper bound of Theorem~1 of \cite{AurzadaDereichLifshits2014} on simple
symmetric random walks. For the sake of completeness, we have included a proof in Appendix \ref{app:upperboundintegral}.

\begin{lemma}\label{lem:hittingtimeat0}
 We have that 
 \[
  \Prb\left(\exists k \in (n^{1/2},n]\colon \lba_k=\lb_k=0
  \mid \lba_1,\dots,\lba_n\ge 0\right)=O(n^{-3/4}).
 \]
\end{lemma}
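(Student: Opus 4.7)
The plan is to use Bayes' rule to rewrite the conditional probability in terms of the unconditioned lazy SSRW $\lw$, then union-bound over $k$ and apply the Markov property to factor each summand. The two resulting factors are controlled by the tools already at hand: Lemma~\ref{lem:upperboundintegral0} for the piece up to time~$k$, and Proposition~\ref{prop:basic} together with the lazy-walk local CLT for the piece from time $k$ to time $n$.

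Concretely, I would rewrite
\[
\Prb\big(\exists k\in(n^{1/2},n]\colon \lba_k=\lb_k=0\mid \lba_1,\ldots,\lba_n\ge 0\big)
=\frac{\Prb\big(\mathcal{E}_n,\,\lw_n=0\big)}{\Prb\big(\lwa_1,\ldots,\lwa_n\ge 0,\,\lw_n=0\big)},
\]
where $\mathcal{E}_n=\{\lwa_1,\ldots,\lwa_n\ge 0\}\cap\{\exists k\in(n^{1/2},n]\colon \lw_k=\lwa_k=0\}$. The denominator is $\Theta(n^{-3/4})$ by Proposition~\ref{prop:basic} and the standard estimate $\Prb(\lw_n=0)=\Theta(n^{-1/2})$, so it suffices to show the numerator is $O(n^{-3/2})$. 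Union-bounding over $k$, on the event $\{\lw_k=\lwa_k=0\}$ the shifted walk $(\lw_{k+j})_{j\ge 0}$ is an independent lazy SSRW whose partial sums equal $\lwa_{k+j}$, so the Markov property factorises each summand as
\[
\Prb\big(\lw_k=\lwa_k=0,\,\lwa_1,\ldots,\lwa_k\ge 0\big)\cdot\Prb\big(\lw_{n-k}=0,\,\lwa_1,\ldots,\lwa_{n-k}\ge 0\big).
\]

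Lemma~\ref{lem:upperboundintegral0} bounds the first factor by $Ck^{-5/2}$. The second factor equals $\Prb(\lw_{n-k}=0)\cdot \Prb(\lwa_1,\ldots,\lwa_{n-k}\ge 0\mid\lw_{n-k}=0)$, which is $O((n-k+1)^{-3/4})$ uniformly in $k\in[0,n]$ by Proposition~\ref{prop:basic} combined with $\Prb(\lw_m=0)=O((m+1)^{-1/2})$ (the trivial bound~$1$ absorbing the regime where $n-k$ is bounded). Summing $k^{-5/2}(n-k+1)^{-3/4}$ over $k\in(n^{1/2},n]$ and splitting the range at $k=n/2$ so that one factor is small in each piece gives the required $O(n^{-3/2})$ for the numerator, and hence $O(n^{-3/4})$ after dividing by the denominator.

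The only real subtlety is ensuring the post-$k$ bound $O((n-k+1)^{-3/4})$ holds uniformly down to $n-k$ small, which is immediate from the universal bound~$1$; everything else is a routine application of the Markov property together with the two key input estimates.
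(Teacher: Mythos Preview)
Your proposal is correct and is essentially the paper's own proof: both pass to the unconditioned walk, union-bound over $k$, split each summand via the Markov property at time $k$ into the factor handled by Lemma~\ref{lem:upperboundintegral0} and the factor $\probareapositive{n-k}=\Prb(\lw_{n-k}=0,\lwa_1,\dots,\lwa_{n-k}\ge0)=O((n-k+1)^{-3/4})$, and then sum $k^{-5/2}(n-k+1)^{-3/4}$ with the same split at $k=n/2$. The only cosmetic difference is that the paper divides by $\probareapositive{n}$ term-by-term whereas you carry the denominator as a single $\Theta(n^{-3/4})$ outside the sum.
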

\begin{proof}
Define $\probareapositive{n}=\Prb(\lwa_1,\dots,\lwa_n\ge 0,\,\lw_n=0)$. 
Note that for any $k$, 
\begin{align*}
 \Prb\left(\lba_k=\lb_k=0 \mid \lba_1,\dots,\lba_n\ge 0\right)
 &=\frac{1}{\probareapositive{n}}\Prb\left(\lwa_k=\lw_k=0,\,\lwa_1,\dots,\lwa_n\ge 0,\,\lw_n=0\right)\\
 &=\frac{1}{\probareapositive{n}}\Prb\left(\lwa_k=\lw_k=0,\,\lwa_1,\dots,\lwa_k\ge 0\right)\\
 &\quad \times \Prb\left(\lwa_{k+1},\dots,\lwa_n\ge 0,\,\lw_n=0 \mid \lwa_k=\lw_k=0\right)\\
 &=\frac{\probareapositive{n-k}}{\probareapositive{n}}\,\Prb\left(\lwa_k=\lw_k=0,\,\lwa_1,\dots,\lwa_k\ge 0\right). 
\end{align*}
By our earlier results (Proposition~\ref{prop:basic}), $\probareapositive{n}=\Theta(n^{-3/4})$ and by
Lemma~\ref{lem:upperboundintegral0}, the final factor is $O(k^{-5/2})$. Therefore, there exists a $C$
such that for each $k\le n$, 
\[
 \Prb\left(\lba_k=\lb_k=0 \mid \lba_1,\dots,\lba_n\ge 0\right)\le C k^{-5/2}(n-k+1)^{-3/4}n^{3/4}.
\]
Then, the result follows from the union bound by observing that
\[
 \sum_{n^{1/2}< k\leq n/2} Ck^{-5/2}(n-k+1)^{-3/4}n^{3/4}
 \le 2^{3/4}C\sum_{k> n^{1/2}} k^{-5/2}=O((n^{1/2})^{-3/2})=O(n^{-3/4})
\]
and
\[
 \sum_{k=\lfloor n/2\rfloor }^n Ck^{-5/2}(n-k+1)^{-3/4}n^{3/4}
 =O(n^{-7/4}\sum_{j=1}^{\lceil n/2\rceil +1} j^{-3/4})=O(n^{-7/4}n^{1/4})=O(n^{-3/2}).
 \qedhere
\]
\end{proof}

We now turn to Lemma~\ref{lem:pn_to_rho} which shows that $\probareazero{n}\to\rho$. 
For the proof of this we need the following result. 

\begin{lemma}\label{lem:ratioq_n}
 Let $q_n=\Prb(\lwa_1,\dots,\lwa_n\ge 0\mid \lw_n=0)$.
 Then, uniformly over all $m\le n^{1/2}$, we have $q_{n-m}/q_n\to 1$ as $n\to\infty$. 
\end{lemma}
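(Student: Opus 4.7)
The plan is to reduce to a comparison of $p_n := \Prb(E_n, F_n) = \probareapositive{n}$ across different $n$, where $E_n := \{\lwa_1,\dots,\lwa_n \geq 0\}$ and $F_n := \{\lw_n = 0\}$. Since $q_n = p_n/\Prb(F_n)$ and by the standard local CLT for the lazy SSRW $\Prb(F_n) = (1+o(1))/\sqrt{\pi n}$, the ratio $\Prb(F_n)/\Prb(F_{n-m})$ tends to $1$ uniformly for $m \leq n^{1/2}$. Hence it suffices to show $p_{n-m}/p_n \to 1$ uniformly.

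For the upper bound $p_n \leq (1+o(1)) p_{n-m}$, my plan is to use $E_n \subseteq E_{n-m}$ to obtain $p_n \leq \Prb(E_{n-m}, F_n)$ and decompose at time $n-m$ using the Markov property of $\lw$:
\[
 \Prb(E_{n-m}, F_n) = \sum_k \Prb(E_{n-m},\,\lw_{n-m}=k)\,\Prb(\lw_m = -k).
\]
The kernel $\Prb(\lw_m = -k)$ is essentially Gaussian at scale $\sqrt m \leq n^{1/4}$, which is much smaller than $\sqrt{n-m}$. What remains is to show that the profile $\Prb(E_{n-m}, \lw_{n-m} = k)$ equals $(1 + o(1))\,p_{n-m}$ uniformly for $|k| = O(\sqrt m)$, so that $\Prb(E_{n-m}, F_n) = (1+o(1))\,p_{n-m}$. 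I would establish this smoothness by combining the monotonicity of $\Prb(E_{n-m}\mid \lw_{n-m}=k)$ in $k$ (Lemma~\ref{lem:monotoneinY_n}) with a step-swap coupling between lazy bridges from $0$ to $0$ and from $0$ to $k$: for $k > 0$, swapping $k$ of the $0$-steps into $+1$-steps transforms one bridge into the other and can only increase the area, and the induced typical area shift is of order $O(nk) = O(n^{5/4}) = o(n^{3/2})$, negligible compared with the typical area scale.

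For the lower bound $p_n \geq (1-o(1)) p_{n-m}$, my plan is to write
\[
 p_n = \Prb(E_{n-m}, F_n) - \Prb(E_{n-m}\setminus E_n,\, F_n),
\]
and bound the error term by $o(p_{n-m})$. The error is the probability that the area stays non-negative through time $n-m$, the walk returns to $0$ at $n$, yet the area dips below $0$ somewhere in $(n-m, n]$. Under the conditional law $\Prb(\cdot \mid E_{n-m}, F_n)$, the typical value of $\lwa_{n-m}$ is of order $n^{3/2}$, while the fluctuation of the area process in the last $m \leq n^{1/2}$ steps, written as $m\,\lw_{n-m} + \tilde{A}_m$ with $\tilde{A}_m$ an independent lazy-SSRW area of length $m$, is $O(m\sqrt{n} + m^{3/2}) = O(n) = o(n^{3/2})$. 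Thus the bad event has conditional probability $o(1)$, and a tail estimate on $\lwa_{n-m}$ (controlling the contribution from atypically small values) completes the bound.

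The hard part will be the profile-smoothness estimate in the upper bound, namely showing $\Prb(E_{n-m}, \lw_{n-m} = k)/p_{n-m} = 1 + o(1)$ uniformly for $|k| = O(\sqrt m)$. This is effectively a quantitative local CLT for the position of a lazy SSRW conditioned on non-negative integrated area. Lemma~\ref{lem:monotoneinY_n} gives one-sided monotone control, but the step-swap coupling has to be carried out carefully to control the ratio uniformly across the entire $\sqrt m$-window rather than merely in expectation.
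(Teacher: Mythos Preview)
Your overall strategy---reduce to $p_n=\Prb(E_n,F_n)$ and decompose at time $n-m$---is natural, but it pushes all the weight onto the ``profile smoothness'' claim $\Prb(E_{n-m},\lw_{n-m}=k)=(1+o(1))\,p_{n-m}$ uniformly for $|k|=O(\sqrt m)$, and this is where the argument has a real gap.

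The step-swap coupling gives only monotonicity: going from endpoint $0$ to $k>0$ increases the area pointwise, hence $\Prb(E_{n-m}\mid\lw_{n-m}=k)\ge\Prb(E_{n-m}\mid\lw_{n-m}=0)$. For the reverse inequality you argue that the pointwise area shift is $O(nk)=O(n^{5/4})=o(n^{3/2})$ and therefore ``negligible.'' But you are working conditionally on the rare event $E_{n-m}$, under which the area process \emph{does} touch zero with probability bounded away from~$0$ (indeed $\to\rho>0$, cf.\ the discussion around Lemma~\ref{lem:pn_to_rho}). At such times any positive downward shift destroys $E_{n-m}$, so ``shift $\ll$ typical scale'' is not enough; making this rigorous would require time-by-time control of the conditioned area process against the coupling shift, which is essentially as hard as the lemma you are trying to prove. (A minor point: for the lazy bridge, ``swap $0$-steps to $+1$-steps'' is not measure-preserving; the correct coupling, as in Lemma~\ref{lem:monotoneinY_n}, passes through the SSRW and raises one lazy increment by~$1$.)

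The paper sidesteps this entirely by inserting a buffer: it fixes $b\sim n^{7/9}$ and compares $\Prb(E_{n-b}\mid F_n)$ with $\Prb(E_{n-b}\mid F_{n-m})$ via the explicit change-of-measure identity
\[
 \E[Z\mid F_n]=\E\!\left[Z\cdot\frac{\varphi_b(\lw_{n-b})}{\varphi_{b-m}(\lw_{n-b})}\,\frac{\varphi_{n-m}(0)}{\varphi_n(0)}\;\middle|\;F_{n-m}\right]
 \qquad(Z\in\cF_{n-b}).
\]
Because $b\gg n^{3/4}\gg m$, the Radon--Nikodym factor is $1+o(1)$ uniformly over $|\lw_{n-b}|\le n^{1/2}$, so no smoothness of the \emph{conditioned} law is ever needed---only a local CLT for the unconditioned walk. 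What remains is to show $\Prb(E_{n-b}\mid F_n)\sim q_n$, i.e., that the area does not change sign in the last $b$ steps; this is handled by a crude estimate combining Lemma~\ref{lem:locallimitlazy} for $|\lba_n|$ with a reflection bound on $\max_{i\le b}|\lb_i|$. The buffer is precisely what converts your hard local-CLT-under-conditioning into a soft measure change plus a tail bound.
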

\begin{proof}
We define $\varphi_\ell(k)=\Prb(\lw_\ell=-k)$, which is also the probability that a lazy SSRW starting at $k$ is at 0 at time $\ell$.
Let $0<m<b<n$ and let $Z$ be a random variable that depends only on
$\cF_{n-b}=\sigma(\lw_1,\dots,\lw_{n-b})$. We first show that
\begin{equation}\label{eq:req}
 \E[Z\mid \lw_n=0] = \E\left[Z\cdot\frac{\varphi_{b}(\lw_{n-b})}{\varphi_{b-m}(\lw_{n-b})} \frac{\varphi_{n-m}(0)}{\varphi_n(0)}\bigmid \lw_{n-m}=0\right].
\end{equation}
For $b'<n'$ and any $Z'$ that depends only on $\cF_{b'}$,
\begin{align*}
 \E[Z'\mid \lw_{n'}=0]
 &=\frac{\E[Z'\one{\{\lw_{n'}=0\}}]}{\Prb(\lw_{n'}=0)}\\
 &=\frac{\E[\E[Z'\one{\{\lw_{n'}=0\}}\mid \cF_{b'}]]}{\Prb(\lw_{n'}=0)}\\
 &=\frac{\E[Z'\Prb(\lw_{n'}=0\mid \cF_{b'})]}
 {\Prb(\lw_{n'}=0)}
 =\E\left[Z'\cdot \frac{\varphi_{n'-b'}(\lw_{b'})}{\varphi_{n'}(0)}\right].
\end{align*}
Applying this to $Z'=Z \frac{\varphi_{b}(\lw_{n-b})}{\varphi_{b-m}(\lw_{n-b})} \frac{\varphi_{n-m}(0)}{\varphi_n(0)}$, $n'=n-m$ and $b'=n-b$, we find that
\begin{align*}
 \E[Z'\mid \lw_{n-m}=0]
 &=\E\left[Z'\cdot\frac{\varphi_{b-m}(\lw_{n-b})}{\varphi_{n-m}(0)}\right]\\
 &=\E\left[Z\cdot\frac{\varphi_{b}(\lw_{n-b})}{\varphi_{b-m}(\lw_{n-b})} \frac{\varphi_{n-m}(0)}{\varphi_n(0)} \cdot \frac{\varphi_{b-m}(\lw_{n-b})}{\varphi_{n-m}(0)}\right]\\
 &=\E\left[Z\cdot\frac{\varphi_{b}(\lw_{n-b})}{\varphi_n(0)} \right]\\
 &=\E[Z\mid \lw_n=0]
\end{align*}
as desired. 

We assume $m\le n^{1/2}$ and let $b = b(n) \in \mathbb{N}$ be such that $b(n) =(1+o(1)) n^{7/9}$. 
Next, we provide bounds on
$\frac{\varphi_{b}(\lw_{n-b})}{\varphi_{b-m}(\lw_{n-b})}\frac{\varphi_{n-m}(0)}{\varphi_n(0)}$.
We first note that by \eqref{eq:prb_Y} for $|k|\le\ell$ we have
\begin{align*}
 \varphi_\ell(k)
 &=\binom{2\ell}{\ell-k}4^{-\ell}=\binom{2\ell}{\ell}4^{-\ell}\cdot\prod_{j=1}^k\Big(1-\frac{2j-1}{\ell+j}\Big)\\
 &=\frac{1}{\sqrt{\pi\ell+O(1)}}\cdot\exp\left(\sum_{j=1}^k\log\Big(1-\frac{2j-1}{\ell+j}\Big)\right)\\
 &=\frac{1}{\sqrt{\pi\ell}}e^{O(1/\ell)}\cdot\exp\left(-\sum_{j=1}^k\Big(\frac{2j-1}{\ell}+O(j^2/\ell^2)\Big)\right)\\
 &=\frac{1}{\sqrt{\pi\ell}}e^{-k^2/\ell+O(k^3/\ell^2)+O(1/\ell)}.
\end{align*}
Hence, $\varphi_\ell(k)/\varphi_{\ell'}(k)\to 1$ provided $\ell\to\infty$,
$\ell/\ell'\to 1$, $k^2(\ell-\ell')/\ell\ell'\to 0$ and $k^3/\ell^2\to0$.
Now define
\begin{align*}
 \underline{\delta}_n&=\min_{|k|\le n^{1/2}}
  \frac{\varphi_{b}(k)}{\varphi_{b-m}(k)} \frac{\varphi_{n-m}(0)}{\varphi_n(0)}\\
 \overline{\delta}_n&=\max_{|k|\le n^{1/2}},
  \frac{\varphi_{b}(k)}{\varphi_{b-m}(k)} \frac{\varphi_{n-m}(0)}{\varphi_n(0)}.
\end{align*}
and note that $\underline{\delta}_n\to 1$ and $\overline{\delta}_n\to 1$ as $n\to \infty$
for our choice of $m$ and~$b$. Indeed, 
$\log\underline{\delta}_n,\log\overline{\delta}_n=O(m/b+mn/b^2+n^{3/2}/b^2)\to0$.

Applying \eqref{eq:req} with $Z=\one_{\{\lwa_1,\dots,\lwa_{n-b}\ge0,\,|\lw_{n-b}|\le n^{1/2}\}}$
gives
\begin{align*}
\Prb\big(\lwa_1,\dots,\lwa_{n-b}\ge 0\mid \lw_n=0\big)
&\le \Prb\big(\lwa_1,\dots,\lwa_{n-b}\ge 0,\,|\lw_{n-b}|\le n^{1/2}\mid \lw_n=0\big)\\
&\quad+\Prb(|\lw_{n-b}|>n^{1/2}\mid \lw_n=0)\\
&\le \overline{\delta}_n\Prb\big(\lwa_1,\dots,\lwa_{n-b}\ge 0,\,|Y_{n-b}|\le n^{1/2}\mid \lw_{n-m}=0\big)\\
&\quad+\Prb(|\lw_{n-b}|>n^{1/2}\mid \lw_n=0)\\
&\le \overline{\delta}_n\Prb\big(\lwa_1,\dots,\lwa_{n-b}\ge 0\mid \lw_{n-m}=0\big)\\
&\quad+\Prb(|\lw_{n-b}|>n^{1/2}\mid \lw_n=0).
\end{align*}

Similarly,
\begin{align*}
\Prb\big(\lwa_1,\dots,\lwa_{n-b}\ge 0\mid \lw_n=0\big)
&\ge \Prb\big(\lwa_1,\dots,\lwa_{n-b}\ge 0,\,|\lw_{n-b}|\le n^{1/2}\mid \lw_n=0\big)\\
&\ge \underline{\delta}_n\Prb\big(\lwa_1,\dots,\lwa_{n-b}\ge 0,\,|Y_{n-b}|\le n^{1/2}\mid \lw_{n-m}=0\big)\\
&\ge \underline{\delta}_n\Prb\big(\lwa_1,\dots,\lwa_{n-b}\ge 0\mid \lw_{n-m}=0\big)\\
&\quad-\underline{\delta}_n\Prb(|\lw_{n-b}|>n^{1/2}\mid \lw_{n-m}=0).
\end{align*}

We note that
\begin{align*}
 &\Prb(|\lw_{n-b}|>n^{1/2}\mid \lw_n=0)=\Prb(|\lw_{b}|>n^{1/2} \mid \lw_n=0)=O(n^{-\omega(1)}),\\
 &\Prb(|\lw_{n-b}|>n^{1/2}\mid \lw_{n-m}=0)=\Prb(|\lw_{b-m}|>n^{1/2} \mid \lw_{n-m}=0) =O(n^{-\omega(1)})
\end{align*}
uniformly in all $m\le n^{1/2}$, where we use that on the event $\{\lw_n=0\}$,
it holds that $(\lw_{i})_{i=1}^n$ and $(\lw_{n-i})_{i=1}^n$ have the same law and the fact
that $n^{1/2}=(1+o(1)) n^{\eps}b^{1/2}$ for some $\eps>0$ (and large enough $n$). 
Therefore, using that $q_n=\Theta(n^{-1/4})$ by Proposition \ref{prop:basic},
\begin{equation}\label{eq:fromnton-m}
 \Prb\big(\lwa_1,\dots,\lwa_{n-b}\ge 0\mid \lw_n=0\big)
 =(1+o(1))\Prb\big(\lwa_1,\dots,\lwa_{n-b}\ge 0\mid \lw_{n-m}=0\big).
\end{equation}
 
To finish the proof, it suffices to show that 
\begin{equation}\label{eq:nosignchangeatend}
 \frac{\Prb(\lwa_1,\dots,\lwa_{n-b}\ge 0\mid \lw_n=0)}{\Prb( \lwa_1,\dots,\lwa_n\ge0\mid \lw_n=0)}\to 1
\end{equation}
as $n\to \infty$. Indeed, if we set $b = \lfloor n^{7/9} \rfloor$, we see that
\begin{align*}
 \frac{q_{n-m}}{q_n}
 &=\frac{\Prb(\lwa_1,\dots,\lwa_{n-m}\ge 0\mid \lw_{n-m}=0)}{\Prb(\lwa_1,\dots,\lwa_n\ge 0\mid \lw_n=0)}\\
 &=\frac{\Prb(\lwa_1,\dots,\lwa_{n-m}\ge 0\mid \lw_{n-m}=0)}{\Prb(\lwa_1,\dots,\lwa_{n-b}\ge 0\mid \lw_{n-m}=0)}
   \frac{\Prb(\lwa_1,\dots,\lwa_{n-b}\ge 0\mid \lw_{n-m}=0)}{\Prb(\lwa_1,\dots,\lwa_{n-b}\ge 0\mid \lw_{n}=0)}\\
 &\quad\times\frac{\Prb(\lwa_1,\dots,\lwa_{n-b}\ge 0\mid \lw_{n}=0)}{\Prb(\lwa_1,\dots,\lwa_n\ge 0\mid \lw_n=0)},
\end{align*}
so \eqref{eq:nosignchangeatend} would imply that the first and the third factor in the product tend to~$1$.
The second factor tends to $1$ by~\eqref{eq:fromnton-m}. 

To obtain \eqref{eq:nosignchangeatend} we need to show that 
\[
 \Prb\big(\exists k\in(n-b,n]\colon \lba_k< 0 \mid \lba_1,\dots,\lba_{n-b}\ge 0\big)\to 0
\]
as $n\to \infty$. We know by our earlier bounds (Proposition~\ref{prop:basic}) that there
is a $c>0$ such that for all $n$ large enough
\[
 \Prb\left(\lba_1,\dots,\lba_{n}\ge 0 \right)\ge  cn^{-1/4},
\]
so it is sufficient to show that 
\[
 \Prb\left(\exists k\in(n-b,n]\colon \sgn(\lba_{k-1})\ne \sgn(\lba_k) \right)=o(n^{-1/4}).
\]
Note that 
\begin{align*}
 \Prb\big(\exists k\in(n-b,n]\colon& \sgn(\lba_{k-1})\ne\sgn(\lba_k)\big)\\
 &\le \Prb\big(|\lba_{n}|\le n^{6/5}\big)+\Prb\Big(\max_{n-b+1\le i\le n}|\lb_i| > n^{6/5}/b\Big)\\
 &\le \Prb\big(|\lba_{n}|\le n^{6/5}\big)+\Prb\Big(\max_{1\le i\le b}|\lb_i| > n^{6/5}/b\Big)
\end{align*}
by the union bound and the fact that $(\lb_{i})_{1\le i\le n}$ and $(\lb_{n+1-i})_{1\le i\le n}$ have the same law.
By Lemma \ref{lem:locallimitlazy},  
\[
 \Prb\big(|\lba_{n}|\le n^{6/5}\big)=\frac{\Prb(|\lwa_n|\le n^{6/5},\,Y_n=0)}{\Prb(Y_n=0)}
 =\frac{O(n^{6/5}/n^2)}{\Theta(n^{-1/2})}=o(n^{-1/4}).
\]

Furthermore, by the reflection principle for the simple symmetric random walk~$\uw$, we have that, for any $k$
\[
 \Prb\Big(\max_{1\le i\le 2b}\uw_i\ge k\Big)\le 2\Prb\Big(\uw_{2b}\ge k\Big).
\]
Under the usual coupling between $(\lw_k)_{1\le k\le b}$ and $(\uw_k)_{1\le k\le 2b}$ we have that 
\[
 \Prb\Big(\max_{1\le i\le b}|\lw_i|>k\Big)=2\Prb\Big(\max_{1\le i\le b}\lw_i>k\Big)
 \le 2\Prb\Big(\max_{1\le i\le 2b}\uw_i\ge 2k\Big)\le 4\Prb\Big(\uw_{2b}\ge 2k\Big),
\]
so observing that $n^{6/5}/b=n^{\eps}b^{1/2}$ for some $\eps>0$ implies that
\[
 \Prb\Big(\max_{1\le i\le b}|\lw_i| > n^{6/5}/b\Big)=n^{-\omega(1)}.
\]
Then, $\Prb(\lw_n=0)=\Theta(n^{-1/2})$ implies that also 
\[
 \Prb\Big(\max_{1\le i \le b}|\lb_i| > n^{6/5}/b\Big)=n^{-\omega(1)}.
\]
So it follows that 
\[
 \Prb\left(\exists k\in(n-b,n]\colon \sgn(\lba_{k-1})\ne \sgn(\lba_k)\right)=o(n^{-1/4})
\]
as claimed.
\end{proof}
Recall that $\probareazero{n}=\Prb(M_n\ge 1\mid \lbac_1,\dots,\lbac_{N_n}\ge 0)$ where $M_n=\#\{i>0:\lbac_i=0\}$.
We will use the preceding lemma to prove Lemma~\ref{lem:pn_to_rho}.

\begin{lemma}\label{lem:pn_to_rho}
We have $\probareazero{n}\to \rho$ as $n\to \infty$.
\end{lemma}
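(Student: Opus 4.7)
The plan is to decompose $\probareazero{n}$ using the stopping time $\zeta_1$ and then combine the ratio estimate from Lemma~\ref{lem:ratioq_n} with the tail bound from Lemma~\ref{lem:hittingtimeat0}. The crucial observation is that, writing $\event_n=\{\lwa_1,\dots,\lwa_n\ge 0\}$, the event $\{M_n\ge 1\}\cap \event_n$ equals $\{\zeta_1\le n,\,\lwa_{\zeta_1}=0\}\cap \event_n$: by the definition of $\zeta_1$ we have $\lwa_{\zeta_1}\le 0$, which combined with $\event_n$ forces $\lwa_{\zeta_1}=0$. Moreover, on $\{\lwa_{\zeta_1}=0\}$ it is automatic that $\lwa_i\ge 0$ for every $i\le \zeta_1$: the area is monotone along each excursion of $\lw$ away from $0$, positive excursions keep it non-decreasing, during a negative excursion ending at some earlier return $\tau<\zeta_1$ the area reaches its minimum at $\tau$ where by minimality of $\zeta_1$ we have $\lwa_\tau>0$, and the final negative excursion decreases monotonically from a positive value to $0$.

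The strong Markov property applied at $\zeta_1$ then yields, for each $1\le k\le n$,
\[
 \Prb\bigl(\zeta_1=k,\,\lwa_{\zeta_1}=0,\,\event_n,\,\lw_n=0\bigr)=r_k\cdot\probareapositive{n-k},
 \qquad r_k:=\Prb(\zeta_1=k,\,\lwa_{\zeta_1}=0).
\]
Summing over $k$ and dividing by $\probareapositive{n}=\Prb(\event_n,\lw_n=0)$ gives
\[
 \probareazero{n}=\sum_{k=1}^{n}r_k\,\frac{\probareapositive{n-k}}{\probareapositive{n}}.
\]
Since the successive excursion areas of $\lw$ are i.i.d., symmetric and mean zero, their partial sums are recurrent, so $\zeta_1<\infty$ almost surely and $\sum_{k\ge 1}r_k=\rho$.

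I now split the sum at the threshold $n^{1/2}$. For the main contribution, Lemma~\ref{lem:ratioq_n} gives $q_{n-k}/q_n\to 1$ uniformly in $k\le n^{1/2}$, and the local CLT supplies $\Prb(\lw_{n-k}=0)/\Prb(\lw_n=0)\to 1$ uniformly as well; together, $\probareapositive{n-k}/\probareapositive{n}\to 1$ uniformly in $k\le n^{1/2}$, so $\sum_{k\le n^{1/2}}r_k\,\probareapositive{n-k}/\probareapositive{n}\to \sum_{k\ge 1}r_k=\rho$. For the remaining terms,
\[
 \sum_{k>n^{1/2}}^{n} r_k\,\frac{\probareapositive{n-k}}{\probareapositive{n}}
 =\Prb\bigl(\zeta_1>n^{1/2},\,\lwa_{\zeta_1}=0\,\bigm|\,\event_n,\lw_n=0\bigr)
 \le \Prb\bigl(\exists k\in(n^{1/2},n]:\lba_k=\lb_k=0\,\bigm|\,\lba_1,\dots,\lba_n\ge 0\bigr),
\]
which is $O(n^{-3/4})$ by Lemma~\ref{lem:hittingtimeat0}. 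Combining the two contributions yields $\probareazero{n}\to\rho$.

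The main obstacle is establishing the equivalence $\{M_n\ge 1\}\cap \event_n=\{\zeta_1\le n,\,\lwa_{\zeta_1}=0\}\cap \event_n$ together with the automatic positivity of $\lwa$ up to $\zeta_1$, because these are precisely what license the clean factorisation via the strong Markov property and allow the tail to be rewritten as a late simultaneous-zero probability to which Lemma~\ref{lem:hittingtimeat0} applies. Once that is in place the result follows by splicing Lemma~\ref{lem:ratioq_n} with Lemma~\ref{lem:hittingtimeat0}.
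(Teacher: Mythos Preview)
Your proof is correct and follows the paper's strategy: decompose via $\zeta_1$, split the sum at $n^{1/2}$, invoke Lemma~\ref{lem:hittingtimeat0} for the tail and Lemma~\ref{lem:ratioq_n} (together with the local CLT for $\Prb(\lw_m=0)$) for the main term. Your organisation is in fact slightly more direct than the paper's: by working with the unconditioned walk and carrying $\{\lw_n=0\}$ as an explicit event, you obtain the clean factorisation $r_k\,\probareapositive{n-k}$ from the ordinary Markov property and read off $\sum_k r_k=\rho$ immediately, whereas the paper works with the bridge $\lb$ throughout and then needs an additional de-conditioning step (comparing the ratios $\varphi_{n-\lfloor n^{1/2}\rfloor}(\cdot)/\varphi_n(0)$) to pass from $\Prb(\zeta_1\le n^{1/2},\,\lba_{\zeta_1}=0)$ to the corresponding unconditioned quantity. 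One small remark: the excursion-area increments are heavy-tailed and the walk $\lwac$ need not be recurrent on $\Z$, so your aside about recurrence is not quite right; but this is harmless, since $\sum_{k\ge1} r_k=\Prb(\zeta_1<\infty,\,\lwa_{\zeta_1}=0)=\rho$ is just the definition and nothing else in your argument uses recurrence. Also, in your tail display the event should read $n^{1/2}<\zeta_1\le n$ rather than $\zeta_1>n^{1/2}$, to match the range of the sum; the subsequent inequality and appeal to Lemma~\ref{lem:hittingtimeat0} are unaffected.
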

\begin{proof}
By definition, 
\[
 \probareazero{n}=\Prb(\exists k\in [n]\colon \lba_k=\lb_k=0 \mid \lba_1,\dots,\lba_{n}\ge 0).
\]
Let $\zeta_1=\zeta_1(\lb)=\min\{k>0:\lb_k=0,\,\lba_k\le 0\}$, with the convention that $\min\emptyset=\infty$. On $\{\lba_1,\dots,\lba_n\ge 0\}$ it holds that $\{\exists k\in [n]\colon\lba_k=\lb_k=0\}=\{\zeta_1\le n\}$ and we first show that, under this conditioning, with high probability $\zeta_1$ is either at most $n^{1/2}$ or larger than $n$. 
We will then show that we may stop conditioning on $\{\lba_1,\dots,\lba_n\ge 0\}$ if we instead consider the event $\{\zeta_1\le n^{1/2},\,\lba_{\zeta_1}=0\}$ (and accept a $o(1)$ term). We then further show that the fact that we are considering $Y^{br}$ instead of $Y$ makes a negligible difference. After making these changes, the random variable $\zeta_1$ does not depend on $n$ and the result follows.

Observe that
\begin{align*}
 \probareazero{n}&=\Prb(\zeta_1\le n \mid \lba_1,\dots,\lba_n\ge 0)\\
 &=\Prb(\zeta_1\le n^{1/2} \mid \lba_1,\dots,\lba_n\ge 0)
  +\Prb(\zeta_1\in (n^{1/2},n] \mid \lba_1,\dots,\lba_n\ge 0)
\end{align*}
 and by Lemma~\ref{lem:hittingtimeat0},
\begin{multline*}
 \Prb(\zeta_1\in (n^{1/2},n] \mid \lba_1,\dots,\lba_n\ge 0)\\
 \le \Prb\left(\exists k \in (n^{1/2},n]\colon \lba_k=\lb_k=0 \mid \lba_1,\dots,\lba_n\ge 0\right)
 =O(n^{-3/4}).
\end{multline*}
It remains to show that $\Prb(\zeta_1\le n^{1/2} \mid \lba_1,\dots,\lba_n\ge 0)\to \rho$.
Recall that $\probbridgeareapositive{n}=\Prb(\lba_1,\dots,\lba_n\ge 0)$. Then, for any $k\in [n]$,
\begin{align*}
 \Prb(\lba_1,\dots,\lba_n\ge 0\mid \zeta_1=k)
 &=\Prb(\lba_1,\dots,\lba_k\ge 0\mid \zeta_1=k)\\
 &\quad\times\Prb(\lba_{k+1},\dots,\lba_{n}\ge 0\mid \zeta_1 =k,\,\lba_{k}=0)\\
 &=\probbridgeareapositive{n-k}\Prb(\lba_1,\dots,\lba_k\ge 0\mid \zeta_1=k)\\
  &=\probbridgeareapositive{n-k}\Prb(\lba_{\zeta_1}=0\mid \zeta_1=k).
\end{align*}
where we used that, on the event $\{\zeta_1=k,\lba_k=0\}$, the restrictions of $\lb$ and $\lba$ to $[k,n]$ 
have the joint law of a lazy simple symmetric random walk bridge on $[n-k]$ and its area process respectively. 
Therefore,
\begin{align*}
 \Prb(\zeta_1\le n^{1/2}\mid \lba_1,\dots,\lba_n\ge 0)
 =\sum_{k=1}^{\lfloor n^{1/2} \rfloor}\Prb(\zeta_1=k)
 \frac{\probbridgeareapositive{n-k}}{\probbridgeareapositive{n}}\Prb(\lba_{\zeta_1}=0\mid \zeta_1=k).
\end{align*}
Lemma~\ref{lem:ratioq_n} shows that $\frac{\probbridgeareapositive{n-k}}{\probbridgeareapositive{n}}\to 1$ as $n\to \infty$
uniformly over all $k\le n^{1/2}$, so
\begin{align*}
 \probareazero{n}
 &=\sum_{k=1}^{\lfloor n^{1/2}\rfloor}\Prb(\zeta_1=k)\Prb\left(\lba_{\zeta_1}=0 \mid \zeta_1=k \right)+o(1)\\
 &=\Prb\left(\zeta_1\le n^{1/2},\,\lba_{\zeta_1}=0\right)+o(1).
\end{align*}
We now show that removing the condition that $\{\lw_n=0\}$ has only a negligible effect on the probability.
Fix $0<\eps<1/4$ and note that 
\begin{align*}
 \Prb\big(\zeta_1\le n^{1/2},\ &\lba_{\zeta_1}=0\big)\\
 &=\Prb\left(\zeta_1\le n^{1/2},\lba_{\zeta_1}=0,|\lb_{\lfloor n^{1/2} \rfloor}|<n^{1/4+\eps}\right)+o(1)\\
 &=\E\left[\one_{\left\{\zeta_1\le n^{1/2}, \lwa_{\zeta_1}=0, |\lw_{\lfloor n^{1/2}\rfloor}|<n^{1/4+\eps}\right\}}
  \frac{\varphi_{n-\lfloor n^{1/2}\rfloor}(\lw_{\lfloor n^{1/2} \rfloor})}{\varphi_{n}(0)}\right]+o(1),
\end{align*}
where we have used the function $\varphi$ from the proof of Lemma~\ref{lem:ratioq_n} and that the event
\[
 \left\{\zeta_1\le n^{1/2}, \lba_{\zeta_1}=0, |\lb_{\lfloor n^{1/2} \rfloor}|<n^{1/4+\eps} \right\}
\]
is measurable with respect to $\sigma(\lb_1,\dots,\lb_{\lfloor n^{1/2}\rfloor})$. 
From the proof of Lemma~\ref{lem:ratioq_n} we have that
\[
 \frac{\varphi_{n-\lfloor n^{1/2}\rfloor}(a)}{\varphi_{n}(0)}\to 1
\]
uniformly over all $|a|<n^{1/4+\eps}$, so 
\begin{align*}
 \Prb\left(\zeta_1\le n^{1/2}, \lba_{\zeta_1}=0 \right)
 &=\Prb\left(\zeta_1\le n^{1/2}, \lwa_{\zeta_1}=0, |\lw_{\lfloor n^{1/2} \rfloor}|<n^{1/4+\eps}\right)+o(1)\\
 &=\Prb\left(\zeta_1\le n^{1/2}, \lwa_{\zeta_1}=0\right)+o(1).
\end{align*}
Now observe  that, under the law of~$\lw$, we have that $\zeta_1$ is a random variable on $\N$
that does not depend on~$n$, so $\Prb\left(\zeta_1\le n^{1/2},\lwa_{\zeta_1}=0\right)\to \rho$
as $n\to \infty$ and the statement follows.
\end{proof}

\subsection{Conditioning on ending in \texorpdfstring{$0$}{0} or \texorpdfstring{$-1$}{-1}}\label{subsec:squaretorectangle}
In this section, we show how Proposition~\ref{prop:areanonnegrectangle} follows
from Proposition~\ref{prop:areanonneg}. By Lemma~\ref{lem:to_walk}, we need to show
that for $(\lw_k)_{k\ge1}$ a lazy SSRW, and $\lwa_k=\sum_{i=1}^k\lw_i$, we have that
\[
 \Prb\left(\lwa_1,\dots,\lwa_{n-1}\ge 0 \mid \lw_{n-1} \in \{0, -1\}\right)=(1+o(1)) n^{-1/4}\frac{\Gamma(3/4)}{\sqrt{2\pi(1-\rho)}}.
\]

\begin{proof}[Proof of Proposition~\ref{prop:areanonnegrectangle}]
First, observe that by Proposition~\ref{prop:areanonneg} and the fact that
$\Prb(\lw_n=0)=(1+o(1)) \tfrac{1}{\sqrt{\pi n}}$,
\[
 \Prb\left(\lwa_1,\dots,\lwa_{n-1}\ge 0,\,\lw_{n-1}=0\right)
 =(1+o(1)) n^{-3/4}\frac{\Gamma(3/4)}{\pi\sqrt{2(1-\rho)}}.
\]
We also need to calculate
\[
 \Prb\left(\lwa_1,\dots,\lwa_{n-1}\ge 0,\,\lw_{n-1}=-1\right).
\]
Note that 
\begin{align}
 \Prb(\lwa_1,\dots,\lwa_n \ge 0,\lw_{n}=0)
 &=\tfrac{1}{4}\Prb(\lwa_1,\dots,\lwa_{n-1}\ge 0,\lw_{n-1}=-1)\notag\\
  &\quad+ \tfrac{1}{4}\Prb(\lwa_1,\dots,\lwa_{n-1}\ge 0,\lw_{n-1}=1)\notag\\
 &\quad+\tfrac{1}{2}\Prb(\lwa_1,\dots,\lwa_{n-1}\ge 0,\lw_{n-1}=0).\label{eq:splitpositionatlaststep}
\end{align}
Both $\Prb(\lwa_1,\dots,\lwa_n\ge 0,\lw_{n}=0)$ and $\Prb(\lwa_1,\dots,\lwa_{n-1}\ge 0,\lw_{n-1}=0)$
are asymptotically equal to $n^{-3/4}\frac{\Gamma(3/4)}{\pi\sqrt{2(1-\rho)}}$, so if we show that
\begin{equation}\label{eq:equalprobpositivenegative}
 \Prb(\lwa_1,\dots,\lwa_{n-1}\ge 0,\lw_{n-1} =-1) =(1+o(1))
 \Prb(\lwa_1,\dots,\lwa_{n-1}\ge 0,\lw_{n-1} =1),
\end{equation}
we can deduce that
\[
 \Prb(\lwa_1,\dots,\lwa_{n-1}\ge 0,\lw_{n-1} =-1)=(1+o(1)) n^{-3/4}\frac{\Gamma(3/4)}{\pi\sqrt{2(1-\rho)}}.
\]
Then, the fact that $\Prb(\lw_{n-1}\in\{0,-1\})=(1+o(1))\tfrac{2}{\sqrt{\pi n}}$ implies that 
\[
 \Prb(\lwa_1,\dots,\lwa_{n-1}\ge 0 \mid \lw_{n-1}\in\{0, -1\})
 =(1+o(1)) n^{-1/4}\frac{\Gamma(3/4)}{\sqrt{2\pi(1-\rho)}},
\]
as required.

We now prove \eqref{eq:equalprobpositivenegative}.
Let $\cY_-$ be the set of paths $(y_0,y_1,\dots,y_{n-1})$ starting at $0$ with steps in
$\{-1,0,1\}$ which satisfy 
\[
 \left\{\sum_{i=1}^k y_i\ge 0\text{ for all }k\in [n-1],\,y_{n-1} =-1\right\},
\]
and let $\cY_+$ be the set of paths $(y_0,y_1,\dots,y_{n-1})$ starting at $0$ with steps in
$\{-1,0,1\}$ which satisfy  
\[
 \left\{\sum_{i=1}^k y_i\ge 0\text{ for all }k\in [n-1],\,y_{n-1} =1\right\}.
\]
We have
\[
 \Prb(\lwa_1,\dots,\lwa_{n-1}\ge 0, \lw_{n-1}=-1)=\Prb(Y\in \cY_-)
\]
and
\[
 \Prb(\lwa_1,\dots,\lwa_{n-1}\ge 0, \lw_{n-1}=1)=\Prb(Y\in \cY_+).
\]
We will define an injective map $f$ from $\cY_-$ to $\cY_+$
such that $\Prb(Y=y)=\Prb(Y=f(y))$ for all $y\in \cY_-$. Then, 
\begin{align*}\Prb\left(Y\in \cY_-\right)&=\sum_{y\in \cY_-} \Prb(Y=y)\\
&=\sum_{y\in \cY_-} \Prb(Y=f(y))\\&=\sum_{y\in f(\cY_-)} \Prb(Y=y)\\&=
\Prb(Y\in \cY_+)-\Prb(Y\in \cY_+\setminus f(\cY_-)).\end{align*}
so we will be done if we can show that $\Prb(Y\in \cY_+\setminus f(\cY_-))=o(n^{-3/4})$. 
We let $f$ be the map that changes the sign of the last excursion away from $0$. To be precise, for $y\in \cY_-$, let $\tau_{max}=\tau_{max}(y)=\max\{k\le n-1:y_k=0\}$ and let
\[
 f(y)_i=\begin{cases}y_i,&\text{if }i\le\tau_{max};\\-y_i,&\text{if }i>\tau_{max}.\end{cases}
\]
It is immediate that $f$ has the claimed properties. In particular, $f(\cY_-)\subseteq \cY_+$ because for $y\in \cY_-$, it holds that $f(y)_k\ge y_k$ for all $k$.
Moreover, $\cY_+\setminus f(\cY_-)$
consists of the paths $y$ that end at $1$ for which the area at time $n-1$ is smaller than twice the area of the last excursion away from $0$, that is,
\[
 \Prb(Y\in \cY_+, Y\notin f(\cY_-))
 =\Prb\left(\sum_{i=1}^k \lw_i\ge 0\text{ for }k\in [n-1],\,\lw_{n-1} =1,
 \sum_{i=1}^{n-1}\lw_i<2\sum_{i=\tau_{max}}^{n-1}\lw_i\right).
\]
We see that 
\begin{multline*}
 \Prb\left(\sum_{i=1}^k \lw_i\ge 0\text{ for }k\in [n-1],\,\lw_{n-1} =1,
 \sum_{i=1}^{n-1} \lw_i<2\sum_{i=\tau_{max}}^{n-1}\lw_i\right)\\
 \le 4\Prb\left(\sum_{i=1}^k \lw_i\ge 0\text{ for }k\in [n],\,\lw_{n} =0,
 \sum_{i=1}^{n} \lw_i<2\sum_{i=\tau_{max}}^{n}\lw_i\right),
\end{multline*}
by conditioning on the value of the $n$th increment (similar to the calculation \eqref{eq:splitpositionatlaststep})
and 
\begin{multline*}
 \Prb\left(\sum_{i=1}^k \lw_i\ge 0\text{ for }k\in [n],\,\lw_{n} =0,
 \sum_{i=1}^{n} \lw_i<2\sum_{i=\tau_{max}}^{n}\lw_i\right)\\
 \le \Prb\left(0\le \sum_{i=1}^{n} \lw_i<2\sum_{i=\tau_{max}}^{n}\lw_i \bigmid \lw_{n} =0\right)\Prb(\lw_n=0).
\end{multline*}
Since $\Prb(\lw_n=0)=\Theta(n^{-1/2})$, we are done if we can show that 
\[
 \Prb\left(0\le \sum_{i=1}^{n} \lb_i<2\sum_{i=\tau_{max}}^{n}\lb_i\right)=o(n^{-1/4}).
\]
Pick any $0<\eps<1/4$. We compute
\begin{align*}
 \Prb\left(0\le \sum_{i=1}^{n} \lb_i<2\sum_{i=\tau_{max}}^{n}\lb_i\right)
 &\le \Prb\left(\sum_{i=\tau_{max}}^{n}\lb_i\ge  n^{1+\eps}\right)
 +\Prb\left(0\le \sum_{i=1}^{n} \lb_i<2 n^{1+\eps}\right)\\
 &=\Prb\left(\sum_{i=1}^{\tau_1}\lb_i\ge n^{1+\eps}\right)
 +\Prb\left(0\le \sum_{i=1}^{n} \lb_i<2 n^{1+\eps}\right),
\end{align*}
where $\tau_1=\tau_1(\lb)$ is the first return time of $\lb$ to~$0$.
We trivially have that $\sum_{i=1}^{\tau_1}\lb_i\le (\tau_1)^2$, and it is easy to see that  
\[
 \Prb(\tau_1=k)=\frac{\tfrac{1}{2k-1}\binom{2k-1}{k}\binom{2n-2k}{n-k} }{\binom{2n}{n} }
 =(1+o(1)) \tfrac{1}{4\sqrt{\pi}} k^{-3/2}n^{1/2}(n-k)^{-1/2},
\]
as $k,n \to \infty$. Hence,
\[\Prb\left(\tau_1\ge n^{1/2+\eps/2}\right)=O(n^{-1/4-\eps/4})=o(n^{-1/4}). \]
Moreover, by Lemma \ref{lem:locallimitlazy}, 
we see that 
\[\Prb\left(0\le \sum_{i=1}^{n} \lb_i<2 n^{1+\eps}\right)=O(n^{1+\eps}n^{-3/2})=o(n^{-1/4}),\]
so we conclude that 
\[\Prb(Y\in \cY_+, Y\not\in f(\cY_-))=o(n^{-3/4}).\]
This proves \eqref{eq:splitpositionatlaststep} and
\[
 \Prb\left(\sum_{i=1}^k \lw_i\ge 0\text{ for all }k\in [n],\,\lw_{n} =0\right)
 =(1+o(1)) n^{-3/4}\frac{\Gamma(3/4)}{\pi\sqrt{2(1-\rho)}}. \qedhere
\]
\end{proof}

\subsection{About half of the sums are even}
\label{subsec:half}
So far we have looked at the probability that a sequence $n-1\ge d_1\ge\dotsb\ge d_n\ge 0$
satisfies the dominating condition~\eqref{eq:DC}. In this section, we consider the parity condition that $d_1 + \dotsb +  d_n$ is even and prove Lemma~\ref{lem:parity}. 

Among the sequences $n-1\ge d_1\ge\dotsb\ge d_n\ge 0$ that satisfy the dominating
condition~\eqref{eq:DC}, let $\mathcal{E}$ denote the set of such sequences for which the sum $d_1 + \dotsb +  d_n$ is even, and $\mathcal{O}$ the sequences for which the sum is odd. We will define a partial matching between $\mathcal{E}$ and $\mathcal{O}$
such that the number of unmatched elements of $ \mathcal{E}\cup  \mathcal{O}$ is $o(| \mathcal{E}|+| \mathcal{O}|)$.
This will immediately imply Lemma~\ref{lem:parity}.

Each sequence $n-1\ge d_1\ge\dotsb\ge d_n\ge 0$ corresponds to a right/down path from
$(0,n-1)$ to $(n,0)$ taking $2n-1$ steps. This corresponds to a sequence $(B_1,\dots,B_{2n-1})$
where $B_i\in\{\rightarrow,\downarrow\}$ takes the value $\rightarrow$ if the path goes right
(which it does $n$ times) and $\downarrow$ if the path goes down (which it does $n-1$ times). 
We say $j\in [2n-2]$ with $j\equiv 0\mod 2$ is a \emph{switch position} for the sequence $(B_1,\dots,B_{2n-1})$ if
$(B_j,B_{j+1})\in
\{(\rightarrow,\downarrow),(\downarrow,\rightarrow)\}$. 
Switching the sequence $(B_1,\dots,B_{2n-1})$ at position $j$ refers to replacing
$(B_j,B_{j+1})$ with the unique element in
$\{(\rightarrow,\downarrow),(\downarrow,\rightarrow)\}
\setminus \{(B_j,B_{j+1})\}$, resulting in some new sequence $(B'_1,\dots,B'_{2n-1})$.
We make two observations:
\begin{itemize}
 \item First, for any $j\in [2n-2]$ with $j\equiv 0\bmod 2$, the position $j$ is a switch position for the
  sequence $B$ if and only if $j$ is a switch position for the sequence $B'$. Moreover, switching at position $j$ is self-inverse:
  switching $B'$ at position $j$ gives $B$.
 \item If $B$ corresponds to a sequence $n-1\ge d_1\ge \dotsb \ge d_n\ge 0$, then the sequence
  $B'$ also corresponds to a sequence $n-1\ge d'_1\ge \dotsb \ge d'_n\ge 0$, where for some $k\in [n]$
  \[
   d'_i\in\begin{cases}\{d_i\},&\text{if }i\ne k;\\ \{d_i-1,d_i+1\},&\text{if }i=k.\end{cases}
  \]
  In particular, the parities of $\sum_i d_i$ and $\sum_i d'_i$ are different.
\end{itemize}
It is not necessarily the case that performing a switch on a sequence in $\mathcal{E}$ results in a sequence
in $\mathcal{O}$ (or vice versa) as the switched sequence may violate the dominating condition~\eqref{eq:DC}.
Therefore, we will only define the matching between subsets $ \mathcal{E}' \subseteq  \mathcal{E}$ and $ \mathcal{O}' \subseteq \mathcal{O}$, where we choose $ \mathcal{E}'$ and $ \mathcal{O}'$ so that for all sequences in $ \mathcal{E}'\cup \mathcal{O}'$, have some slack
in the dominating condition~\eqref{eq:DC}. We will show that $| \mathcal{E}'\cup \mathcal{O}'|=(1+o(1))| \mathcal{E}\cup \mathcal{O}|$, for which we need the following two lemmas.

\begin{lemma}\label{lem:slack}
 As $n\to \infty$,
 \[
  \Prb(\lwa_{\lfloor n/2 \rfloor},\dots,\lwa_{n-1}\ge 1
   \mid \lwa_1,\dots,\lwa_{n-1}\ge 0, \lw_{n-1}\in\{-1,0\})=1-o(1).
 \]
\end{lemma}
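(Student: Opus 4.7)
Because $\lwa$ is integer-valued, on the event $\{\lwa_1,\dots,\lwa_{n-1}\ge 0\}$ the bad event (complement of what we want) is exactly
\[
 \bigcup_{k=\lfloor n/2\rfloor}^{n-1}\{\lwa_k=0\}.
\]
The first step is the elementary observation that on $\{\lwa_1,\dots,\lwa_{n-1}\ge 0\}$, having $\lwa_k=0$ forces $\lw_k=0$: on each maximal excursion of $\lw$ away from $0$, $\lwa$ is strictly monotone (every non-endpoint step of the excursion strictly adds to, or strictly subtracts from, the area), so if $\lwa_k=0$ with $k$ strictly inside an excursion, one of the zeros of $\lw$ adjacent to $k$ would have $\lwa<0$, contradicting non-negativity. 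Consequently it suffices to bound
\[
 \sum_{k=\lfloor n/2\rfloor}^{n-1} \Prb\bigl(\lwa_k=\lw_k=0,\ \lwa_1,\dots,\lwa_{n-1}\ge 0,\ \lw_{n-1}\in\{0,-1\}\bigr)
\]
and divide by $\Prb(\lwa_1,\dots,\lwa_{n-1}\ge 0,\,\lw_{n-1}\in\{0,-1\})$.

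Next I would apply the Markov property at the deterministic time $k$. Since $\lw_k=\lwa_k=0$ makes the walk restart, each summand factorises as
\[
 \Prb\bigl(\lwa_k=\lw_k=0,\ \lwa_1,\dots,\lwa_k\ge 0\bigr)\cdot \Prb\bigl(\lwa_1,\dots,\lwa_{n-1-k}\ge 0,\ \lw_{n-1-k}\in\{0,-1\}\bigr).
\]
Call these two factors $p_k$ and $\tilde{q}_{n-1-k}$. Lemma~\ref{lem:upperboundintegral0} gives $p_k\le C k^{-5/2}$, and Proposition~\ref{prop:basic} combined with Lemma~\ref{lem:ignore1} and the local central limit theorem bound $\Prb(\lw_m\in\{0,-1\})=\Theta(m^{-1/2})$ gives $\tilde{q}_m=O(m^{-5/4})$ for $m\ge 1$, with the trivial value $\tilde{q}_0=1$.

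Now I sum: the prefactor $p_k$ is uniformly $O(n^{-5/2})$ for $k\ge n/2$, and
\[
 \sum_{k=\lfloor n/2\rfloor}^{n-1}\tilde{q}_{n-1-k}=\sum_{m=0}^{\lceil n/2\rceil}\tilde{q}_m=O(1),
\]
since $\sum m^{-5/4}$ converges. Hence the numerator in the conditional probability of failure is $O(n^{-5/2})$. Combined with the same combination of results which gives the denominator as $\Theta(n^{-5/4})$, the conditional probability of failure is $O(n^{-5/4})=o(1)$, as required.

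The only real obstacle is the monotonicity observation, which has to be stated carefully because $\lw$ is \emph{lazy}: the increments of $\lwa$ along an excursion are exactly the (nonzero) values of $\lw$ along that excursion, so strict monotonicity of $\lwa$ on each excursion does hold, and everything else is a routine sum and comparison.
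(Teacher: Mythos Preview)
Your approach is essentially the paper's: both reduce to bounding the probability that $\lwa_k=\lw_k=0$ for some $k\ge \lfloor n/2\rfloor$, split via the Markov property at~$k$, and invoke Lemma~\ref{lem:upperboundintegral0}. The paper streamlines this by first replacing the condition $\lw_{n-1}\in\{0,-1\}$ with $\lw_n=0$ (at the cost of a factor~$4$), after which Lemma~\ref{lem:hittingtimeat0} applies directly; you re-derive that union bound in place. Two points need fixing.

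First, your exponent on $\tilde q_m$ is wrong. Proposition~\ref{prop:basic} with Lemma~\ref{lem:ignore1} gives the \emph{conditional} probability $\Theta(m^{-1/4})$, and multiplying by $\Prb(\lw_m\in\{0,-1\})=\Theta(m^{-1/2})$ yields $\tilde q_m=\Theta(m^{-3/4})$, not $O(m^{-5/4})$. Hence $\sum_{m=0}^{\lceil n/2\rceil}\tilde q_m=\Theta(n^{1/4})$ rather than $O(1)$, and the denominator is $\Theta(n^{-3/4})$ rather than $\Theta(n^{-5/4})$. With the correct values the numerator is $O(n^{-5/2})\cdot\Theta(n^{1/4})=O(n^{-9/4})$ and the conditional failure probability is $O(n^{-3/2})$, so the conclusion survives --- but as written the arithmetic is off.

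Second, the monotonicity step has a boundary case you do not address. If $\lwa_k=0$ with $\lw_k<0$, the contradiction comes from the \emph{end} of the negative excursion, which need not lie in $[1,n-1]$. Chasing this through, the only configuration not forcing a time $k'\in[\lfloor n/2\rfloor,n-1]$ with $\lwa_{k'}=\lw_{k'}=0$ is $k=n-1$, $\lw_{n-1}=-1$, $\lwa_{n-1}=0$. This single extra event has probability at most $4\,\Prb(\lwa_n=\lw_n=0,\,\lwa_1,\dots,\lwa_n\ge0)=O(n^{-5/2})$ by Lemma~\ref{lem:upperboundintegral0} (append one step), so it is harmless, but it should be mentioned. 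The paper sidesteps this wrinkle by passing to $\lw_n=0$ first, which closes the last excursion.
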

\begin{proof}
By Proposition \ref{prop:areanonnegrectangle}, $\Prb(\lwa_1,\dots,\lwa_{n-1}\ge 0, \lw_{n-1}\in\{-1,0\})=\Theta(n^{-3/4})$, so we are done if we show that 
\[\Prb(\lwa_i=0\text{ for some }i\geq \lfloor n/2\rfloor, \lwa_1,\dots,\lwa_{n-1}\ge 0, \lw_{n-1}\in\{-1,0\})=o(n^{-3/4}).\]
But by conditioning on the value of the $n$th increment, we see that 
\begin{align*}&\Prb(\lwa_i=0\text{ for some }i\geq \lfloor n/2\rfloor, \lwa_1,\dots,\lwa_{n-1}\ge 0, \lw_{n-1}\in\{-1,0\})\\&\quad \leq 4 \Prb(\lwa_i=0\text{ for some }i\geq \lfloor n/2\rfloor, \lwa_1,\dots,\lwa_{n-1}\ge 0, \lw_{n}=0),\end{align*}
and Lemma \ref{lem:hittingtimeat0} and Proposition \ref{prop:areanonneg} imply that the right-hand side is $o(n^{-3/4})$ as claimed.
\end{proof}
Informally, this lemma states that switching the sequence `near the last checks' is unlikely to affect whether the  dominating condition (\ref{eq:DC}) holds. 
We next show that all but a negligible fraction of the sequences have switch positions `near the last checks' (which is the near the middle of $B$). 

We say a sequence $(B_1,\dots,B_{2n-1})$ is $k$-\emph{switchable} if it has a switch position at some
even $i\in [n-2k,n-2]$. 
\begin{lemma}\label{lem:switches_everywhere}
 For any $k\in[n-1]$, the number of sequences $(B_1,\dots,B_{2n-1})\in \{\rightarrow,\downarrow\}^{2n-1}$
 that are not $k$-switchable is at most $2^{-k}4^n$.
\end{lemma}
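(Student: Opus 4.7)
The plan is a direct counting argument: since switch positions lie at even indices, the pairs of coordinates they constrain are pairwise disjoint, and non-$k$-switchability therefore imposes independent coin-flip constraints, each halving the count.

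First I would unpack the negation of $k$-switchability: a sequence $(B_1,\ldots,B_{2n-1})$ fails to be $k$-switchable exactly when, for every even $i \in [n-2k, n-2]$, the pair $(B_i, B_{i+1})$ does \emph{not} lie in $\{(\rightarrow,\downarrow),(\downarrow,\rightarrow)\}$; equivalently, $B_i = B_{i+1}$ for every such $i$. Write $T$ for the set of even $i \in [n-2k, n-2]$. Since any two distinct elements of $T$ differ by at least $2$, the index pairs $\{i, i+1\}$ for $i \in T$ are pairwise disjoint subsets of $\{1,\ldots,2n-1\}$.

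Now I would count directly. Each of the $|T|$ disjoint pairs contributes a factor of $2$ (namely $(\rightarrow,\rightarrow)$ or $(\downarrow,\downarrow)$) instead of the unrestricted $4$, while each of the remaining $2n-1-2|T|$ coordinates is free and contributes a factor of $2$. Hence the number of non-$k$-switchable sequences is at most
\[
2^{|T|}\cdot 2^{2n-1-2|T|}=2^{2n-1-|T|}.
\]
Since $[n-2k,n-2]$ consists of $2k-1$ consecutive integers, at least $k-1$ of which are even, we have $|T|\ge k-1$, giving the desired bound $2^{2n-1-(k-1)}=2^{2n-k}=2^{-k}\,4^n$.

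The only obstacle is routine parity bookkeeping — verifying that the constrained pairs are indeed disjoint and that at least $k-1$ even indices lie in the specified interval — both of which are immediate. No probabilistic input is required; this is a purely combinatorial counting lemma.
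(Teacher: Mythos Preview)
Your proof is correct and essentially identical to the paper's: both observe that the non-switch constraints at distinct even indices are independent, each cutting the count in half. The paper phrases this probabilistically (each constraint fails with probability $\tfrac12$) and somewhat loosely asserts ``$k$ independent failures,'' whereas you count directly and are in fact more careful about the parity bookkeeping, correctly noting that $|T|\ge k-1$ suffices because the total number of sequences is $2^{2n-1}$ rather than $4^n$.
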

\begin{proof}
We choose a sequence $(B_1,\dots,B_{2n-1}) \in \{\rightarrow,\downarrow\}^{2n-1}$ by independently choosing each $B_i$ uniformly at random. The probability that $B$ has a switch at an even position $j\in [2n-1]$ is $\frac12$, and the events are independent for different even values of $j$. The probability that there are $k$ independent failures is hence $2^{-k}$.
\end{proof}
We now have all the necessary set-up to conclude the proof.
\begin{proof}[Proof of Lemma \ref{lem:parity}]
Set $k=\lfloor n/4\rfloor$. Let $ \mathcal{F}$ denote the set of sequences $d\in  \mathcal{E}\cup \mathcal{O}$ for which the right/down-sequence $(B_1,\dots,B_{2n-1})$ that corresponds it (as defined earlier this section) is not $k$-switchable,
or for which the area process takes the value $0$ at some point in the last $2k$ steps.
Then, applying Lemma~\ref{lem:slack} and Lemma~\ref{lem:switches_everywhere}, we find that $| \mathcal{F}|=o(| \mathcal{E}\cup  \mathcal{O}|)$. 

We set $ \mathcal{O}'= \mathcal{O}\setminus  \mathcal{F}$ and $ \mathcal{E}'= \mathcal{E}\setminus  \mathcal{F}$ and define a matching between
$ \mathcal{O}'$ and $ \mathcal{E}'$ as follows. Let $d\in  \mathcal{E}'\cup  \mathcal{O}'$ and let $B\in \{\rightarrow,\downarrow\}^{2n-1}$
be the corresponding right/down-path. With $(\ell,\ell)$ as
the unique grid point at which the path crosses the diagonal, as defined in Section~\ref{sec:prelim}, we note that $(B_1,\dots,B_{n-1})$
is a path from $(0,n-1)$ to $(\ell,\ell)$. We will attempt to change the parity by making a switch as close to the end $(\ell,\ell)$ of this path as possible. By choice of~$ \mathcal{F}$, there is a switch position at some
even $j\in [n-2k,n-2]$. We perform the switch at the last such position.
This only affects the last at most $2k$ positions of the area process,
which can only be increased by~1, reduced by 1 or stay the same. Therefore, the dominating
condition \eqref{eq:DC} is not affected (by our choice of $ \mathcal{F}$). This defines a matching
with the desired properties.
\end{proof}

\section{Computational results}\label{sec:comp}
In this section we give a new recursion which we have used to calculate $G(n)$ for many new values, and a description of how we numerically estimated the value of $\rho$ (and $\cdeg$). We also make the surprising observation that, while roughly half the sequences which satisfy the dominating condition have even sum, the convergence to a half is rather slow.

\subsection{Determining the exact values of \texorpdfstring{$G(n)$}{G(n)} for small \texorpdfstring{$n$}{n}}

\begin{figure}
    \centering
    \input{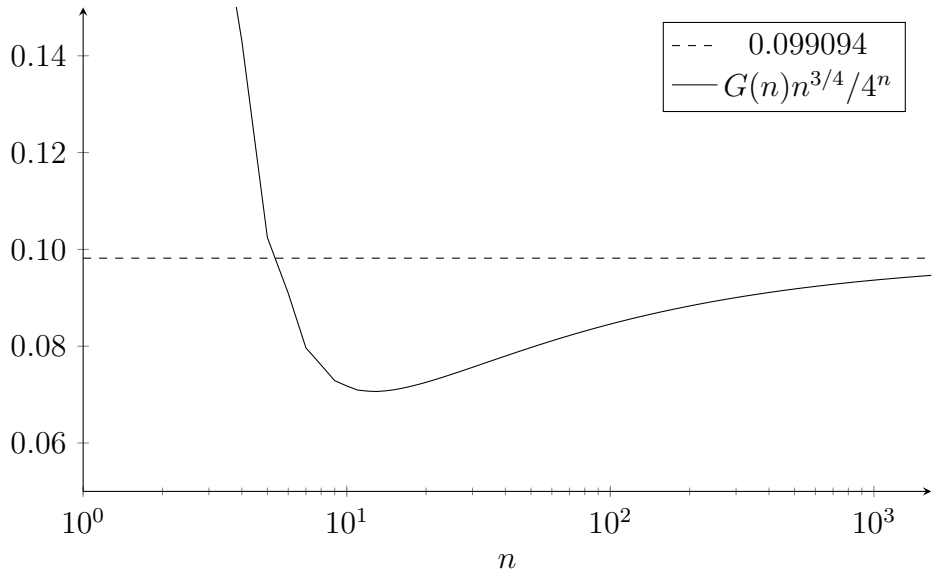}
    \caption{The graph depicts our numerical estimation of $\cdeg$ (dotted line) via the approximation of $\rho$, and a numerical estimation based on the exact values of $G(n)$ for small~$n$.}
    \label{fig:graph}
\end{figure}

We give a simple recursion to count the number of graphic sequences. 
In Section \ref{sec:reformulation}, steps 1 through 5 define a deterministic mapping from a sequence $n-1\geq d_1\geq \dots \geq d_n\geq 0$ to a (non-random) walk $Y$ (with steps in $\{-1,0,1\}$ and ending in $0$ or $-1$), such that the sequence $d$ satisfies the dominating condition (\ref{eq:DC}) if and only if $\sum_{j=1}^k Y_j\geq 0$ for all $1\leq k\leq n-1$. Moreover, as explained in Step 5, we have $\sum_{i=1}^n d_i\equiv \sum_{i=1}^{n-1}Y_i\bmod 2$.
We will use this to count the number of graphic sequences. 

Note that a single walk $Y$ corresponds to $2^z$ sequences, where $z$ is the number of zero steps that the walk takes (see Figure \ref{fig:reformulation}). We will weight each walk $Y$ accordingly when counting.

Let $F(N, y, a)$ be the weighted number of walks $(Y_i)_{i=1}^N$ which start at $y$, take $N$ steps, end in $\{-1,0\}$ and satisfy $a + \sum_{i=1}^k Y_i \geq 0$ for all $0 \leq k \leq N$ and $a + \sum_{i=1}^{N}Y_i \equiv 0 \bmod 2$. We think of $a$ as the starting value of the area process, or the area from any earlier steps of the walk. We see that by taking $N = n -1$, $y = 0$ and $a = 0$, we count exactly the walks which correspond to sequences that are graphic. Hence, the number of graphic sequences of length $n$ is $F(n-1, 0, 0)$, and we can calculate this using a recursion for $F$ as follows.

First, we have a boundary condition that $F(N, y, a) = 0$ whenever $a < 0$. If this is not the case, we consider the three options for the first step in the walk and how to complete the walk after this step. A completion would need to take $N - 1$ steps in every case, but the starting position and the area will change. The walk could step up, after which a completion starts at $y + 1$ with area $a + y + 1$; the walk could step down, in which case a completion starts at $y - 1$ with area $a + y  - 1$; or the walk could take a step of size $0$, in which case a completion starts at $y$ with area $a + y$.  Since in the last case the walk takes a step of size 0 and we are counting the weighted number of completions, this last case must be counted twice. This leads to the following recursion:
\[F(N, y, a) = F(N - 1, y + 1, a + y + 1) + F(N - 1, y - 1, a + y -1) + 2F(N-1, y, a + y),\]
with boundary condition $F(N, y, a) = 0$ whenever $a < 0$, and initial condition $F(0, y, a) = 1$ if $y \in \{0, -1\}$, $a \geq 0$ and $a$ even, and $F(0, y, a) = 0$ otherwise.

To calculate the number of graphic sequences, we calculated every value of $F$ for a given value of $N$ using the pre-computed values for $N - 1$, then calculated the values for $N  + 1$ using the values for $N$ and continued this until the calculation ran out of memory. At each step, we get one more value of $G(n)$ by reading off the value of $F(n-1, 0, 0)$. 
For this approach to work, we need $F(N, y, a)$ to take only finitely many values for each value of $N$. First, observe that $F(N, y, a) = 0$ whenever $y > N$ or $y < - N - 1$ as the walk needs to end in $\{0, -1\}$.

We now consider for which values of $a$ we need to calculate $F(N, y, a)$. Clearly, the boundary condition means we do not need to consider $a < 0$. Moreover, if $y < 0$, we can ignore all $a < y(y+1)/2$ as the area process will take a negative value at some point. This just leaves large $a$.

We claim that, given that the walk starts at $y$, in $N$ steps the area can decrease by at most 
\[
 a' = \frac{N^2 - 2Ny + 2N - y^2 + \one_{\text{odd}}(N-y)}{4}.
\]
In particular, this means that if $a \geq a'$, the area will always stay non-negative (provided it is already) and $F$ only depends on the parity of $a$. Hence, if $c = \max\{0, a'\}$ and $a \geq c$, the value $F(N, y , a)$ is either $F(N, y, c)$ or $F(N, y, c + 1)$. 

Let us briefly justify the claim. First, consider the amount the area process can decrease when the walk starts at 0 and returns to 0. The best thing for the walk to do is for it to take $\floor{N/2}$ steps downwards, then $\floor{N/2}$ steps upwards. If $N$ is odd, one extra step of size 0 should be inserted between the downwards and upwards steps. This reduces the area process by 
\[ \sum_{i=1}^{\floor{N/2}} i + \one_{\text{odd}}(N) \floor{N/2} + \sum_{i=1}^{\floor{N/2} - 1} i = \frac{N^2 - \one_{\text{odd}}(N)}{4}.\]
If $0 \leq y \leq N - 1$, the biggest reduction comes from taking $y + 1$ steps downwards to $-1$, then doing the above steps with the remaining steps (but starting and ending at $-1$). This reduces the area by 
\[ -\sum_{i = -1}^{y-1} i + \frac{(N - y - 1)^2 - \one_{\text{odd}}(N -y - 1)}{4} + (N - y - 1) = a'.\]
When $y = N$, the same equation holds. When $y < 0$, then the best thing to do is to reserve $-y - 1$ steps which will be used as upwards steps at the end. For the first $N + y + 1$ steps, one should aim to minimise the area while returning to $y$. This gives a total of 
\[ \frac{(N + y + 1)^2 - \one_{\text{odd}}(N + y + 1)}{4} - y (N + y + 1) + - \sum_{i= y + 1}^{-1} i = a'.\]
 The final optimisation we implement is to consider the function $f$ defined by 
\[f(N, y, a) = \begin{cases}
  F(N, y, a) - F(N, y, a - 1), & a \ne 0;\\
  F(N, y, 0), & a = 0.
 \end{cases}\]
This satisfies a similar recursion to the function $F$, but the numbers are generally smaller and this helps with the memory usage.

We wrote a program in Go which uses this recursion to calculate the number of graphic sequences and ran this on a node with 512 GiB of RAM until the program ran out of memory. This produced the first 1392 numbers (starting from $n = 0$). Of course, we are only interested in calculating $F(N, 0 ,0)$ and we do not need to calculate the value of $F(N -1, y, a)$ for every choice of $y$ and $a$. Indeed, calculating $F(N, 0, 0)$ only needs three values of $F(N, y, a)$ (and one is trivially 0). We therefore ran the program a second time and stopped the recursion just before the program ran out of memory. We then attempted to calculate $F(N + 1, 0, 0), F(N+2, 0, 0), \dots$ by only calculating values of $F$ when they were required (and keeping the calculated values in a map). 
This second step added another 260 values, so that we now know $G(n)$ for all $n \leq 1651$.
The code (along with the computed values) is attached to the arXiv submission, and the new values have been added to the OEIS entry \href{https://oeis.org/A004251}{A004251}. The sequence \href{https://oeis.org/A095268}{A095268} counts the number $G'(n)$ of zero-free graphic sequences, which is given by $G(n) - G(n - 1)$, and we have also updated this sequence.

\subsection{A surprising observation on the parity condition}
\label{subsec:eo}
By changing the initial condition, the above recursion can be changed to count $H(n)$, the sequences which satisfy the dominating condition and have odd parity.  Lemma \ref{lem:parity} shows that $H(n) =(1+o(1)) G(n)$, and one might naturally assume that $|G(n) - H(n)|$ is exponentially smaller than $G(n)$ (and $H(n)$), but this does not appear to be the case. 
Surprisingly, numerical estimates suggest that $G(n) - H(n) = \Theta(4^n/n^{5/2})$ which is only a factor of $\Theta(n^{7/4})$ smaller than $G(n)$. 

We remark that if we ignore the dominating condition and just count the number of sequences $n-1\ge d_1\ge\dotsb\ge d_n\ge0$ for which $\sum d_i$ is even and for which $\sum d_i$ is odd we get
\[
 \frac{1}{2}\left(\binom{2n-1}{n-1}+\binom{n-1}{\lfloor n/2\rfloor}\right)
 \qquad\text{and}\qquad
 \frac{1}{2}\left(\binom{2n-1}{n-1}-\binom{n-1}{\lfloor n/2\rfloor}\right)
\]
respectively. The difference between these two quantities is only $O(2^n)$, which is exponentially smaller than either. 

The partial matching we used to prove Lemma~\ref{lem:parity} suggests a possible explanation for this. In the proof, we switched at the last switchable position before $n-1$. Adapting the proof of the main result in \cite{AurzadaDereichLifshits2014} to \emph{lazy} SSRW bridges yields that $\Prb(\lwa_{n-1}=0\mid \lw_{n-1}=0,\lwa_1,\dots,\lwa_{n-1}\geq 0)=\Theta(n^{-7/4})$, and a sequence with $\lwa_{n-1} = 0$ cannot be `switched down'. Assuming that roughly half of them would be switched down (with the other half switched up), this yields a proportion $\Omega(n^{-7/4})$ of the even sequences which do not have an odd counterpart. 
Of course, there are also odd sequences without an even counterpart (e.g. those with $\lwa_{n-1} = 1$ and $\lwa_{n-2} = 0$ cannot be switched down), but this observation makes it at least reasonable that the order of the difference is $\Theta(4^n/n^{5/2})$.

\subsection{Estimating the constant \texorpdfstring{$\rho$}{rho}}
\label{subsec:rho}

We now give a method for estimating~$\rho$. Recall that $(\lw_k)$ is the lazy simple symmetric walk
and $(\lwac_k)$ is the random walk formed by summing the (signed) excursion areas $\lwae_k$ of $(\lw_k)$.
We first need to find the distribution of the excursion areas $\lwae_k$.
We define the following (partial) generating function
\[
 g(x,y)=\sum_{i,j>0}\Prb(\text{excursion is of area $i$ and length $j$})x^iy^j.
\]
Note that this is really only part of the full generating function, which is $\frac{y}{2}+g(x,y)+g(x^{-1},y)$, to take into account excursions below the axis, and also the probability $\frac{1}{2}$ event that the lazy
random walk does not move on the first step (and hence the excursion area is zero). 

\begin{lemma}
 The generating function $g(x,y)$ satisfies the recursive equation
 \[
  g(x,y)=\frac{xy^2}{16(1-\frac{xy}{2}-g(x,xy))}.
 \]
\end{lemma}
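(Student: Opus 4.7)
My plan is to derive the recursion via a first-passage decomposition of the positive excursion according to its behavior at level~$1$. A positive excursion of the lazy SSRW begins with a step from $0$ to $1$, then evolves while staying strictly positive, and eventually ends with a step from $1$ to $0$. Between these first and last steps, the walk visits level~$1$ finitely many times; at each visit it either stays at $1$ via a lazy step (probability $\tfrac12$), makes an ``above-$1$ excursion'' by stepping up to $2$, staying strictly above $1$, and eventually returning to~$1$, or takes the final step down to $0$ (probability $\tfrac14$), thereby ending the excursion.

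The first up-step contributes length $1$ and area $1$ with probability $\tfrac14$, giving a factor $\tfrac{xy}{4}$; the final down-step contributes length $1$ and area $0$ (since $Y = 0$ at the landing time) with probability $\tfrac14$, giving a factor $\tfrac{y}{4}$. Each interim lazy step at level~$1$ contributes length $1$, area $1$, and a factor $\tfrac{xy}{2}$. For an above-$1$ excursion I will use the key observation that, by translation invariance of the walk on $\Z$, it has the same law as a positive excursion from~$0$ with every $Y$-value raised by one. Hence, if a positive excursion has length $j$ and area $i$, the shifted version above~$1$ has length $j$ and area $i + j$, so the generating function for a single above-$1$ excursion (including its initial up-step from $1$ to $2$ and final down-step from $2$ to $1$) equals $g(x, xy)$, obtained from $g(x, y)$ by sending $y \mapsto xy$.

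Applying the strong Markov property at each visit of the walk to level $1$, the interim pieces are independent and identically distributed, each contributing a factor $\tfrac{xy}{2} + g(x, xy)$. Summing over the number $k \ge 0$ of interim pieces before the final down-step gives
\[
 g(x, y) \;=\; \frac{xy}{4}\cdot\frac{y}{4}\sum_{k \ge 0}\Bigl(\tfrac{xy}{2} + g(x, xy)\Bigr)^{k} \;=\; \frac{xy^{2}}{16\bigl(1 - \tfrac{xy}{2} - g(x, xy)\bigr)},
\]
as claimed. The main conceptual step is the translation-invariance observation that identifies an above-$1$ excursion with a shifted positive excursion from~$0$; once this is in place, the remainder is a routine geometric sum which is valid as a formal power series since every summand has strictly positive $y$-degree.
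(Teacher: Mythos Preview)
Your proof is correct and follows essentially the same decomposition as the paper: both argue that after the first up-step to level~$1$, the interior of the excursion is a sequence of independent pieces (each a lazy step at~$1$ or an above-$1$ excursion with generating function $g(x,xy)$), followed by the final down-step. The paper phrases this as a first-return recursion $g(x,y)=\tfrac{xy^2}{16}+(\tfrac{xy}{2}+g(x,xy))\,g(x,y)$ and then solves for $g(x,y)$, whereas you sum the geometric series directly; the two are equivalent and your area bookkeeping is accurate.
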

We note that as all terms of $g(x,y)$ have a positive power of~$y$, terms in $g(x,xy)$ have higher $x$-degree
than their counterparts in $g(x,y)$, and hence this equation allows one to recursively evaluate $g(x,y)$
to any order. For example,
\[
 g(x,y)=\tfrac{x y^2}{16}+
 \tfrac{x^2y^3}{32}+\tfrac{x^3y^4}{64}+
 \tfrac{x^4(y^4+2y^5)}{256}+
 \tfrac{x^5(y^5+y^6)}{256}+
 \tfrac{x^6(2y^5+3y^6+2y^7)}{1024}+O(x^7).
\]
\begin{proof}
To have a positive excursion, the first step of the lazy random walk must be up (probability $\frac{1}{4}$).
Then it either goes down (probability $\frac{1}{4}$) giving a term $\frac{1}{16}xy^2$ in $g(x,y)$,
or it follows some non-negative excursion before returning to height~1.
Let the excursion have area $X_1$ and length $T_1$ (so that the walk is now at $1$ with total area $X_1 + T_1 + 1$ and length $T_1 + 1$).
The rest of the excursion is then
equivalent to one starting at $0$ at time $T_1$ with an initial step up, and say this excursion has area $X_2$ and length $T_2$.
The sum $\sum \Prb(X_1,T_1)x^{X_1}z^{T_1}$ is just $\frac{z}{2}+g(x,z)$ (as we allow the
trivial area zero excursion here), and the sum $\sum \Prb(X_2,T_2)x^{X_2}y^{T_2}$is just $4g(x,y)$
(as we automatically can assume the first step is up). Overall we get an
excursion of length $T_1+T_2$ and area $(X_1+T_1)+X_2$. Setting $z=xy$ and multiplying these (together with 
a factor of $\frac{1}{4}$ for the initial step up) gives the remaining terms of $g(x,y)$. Hence,
\[
 g(x,y)=\tfrac{xy^2}{16}+\tfrac{1}{4}\big(\tfrac{xy}{2}+g(x,xy)\big)\cdot 4g(x,y),
\]
and the lemma follows from rearranging this equation.
\end{proof}

Now by setting $y=1$ we get a generating function for
the (positive) excursion areas, namely
\[
 g(x,1)=\tfrac{x}{16}+\tfrac{x^2}{32}+\tfrac{x^3}{64}+
 \tfrac{3x^4}{256}+\tfrac{x^5}{128}+\tfrac{7x^6}{1024}+
 \tfrac{21x^7}{4096}+\tfrac{37x^8}{8192}+\tfrac{31x^9}{8192}+O(x^{10}).
\]
Note that $g(1,1)=\frac{1}{4}$ is the probability of a positive excursion.

The next task is to estimate or bound~$\rho$, which is the probability that the random walk $\lwac_k$ with step
sizes $\lwae_k$ hits 0 before first taking a negative value. To do this we construct a finite state Markov chain
with states $\{-,0,1,2,\dots,n-1,\star\}$ where the states $-$ and $\star$ represent the walk taking negative value and reaching a state at least $n$ respectively. The states $-$ and $\star$ are absorbing, and otherwise we add a random variable distributed like a signed excursion area. If adding this, takes the walk negative or at least $n$, the walk moves to $-$ or $\star$  as necessary. We start the Markov chain at 0 and run until we either hit 0 again, or one of the states $-$ or $\star$. 
By a simple coupling argument it is clear that
\[
 \Prb(\text{hit }0)\le \rho \le \Prb(\text{hit }0) + \Prb(\text{hit }\star).
\]
Taking $n$ sufficiently large gives us reasonable bounds on~$\rho$.

As an example, taking $n=2$ we have states $\{-,0,1,\star\}$ and transition matrix
\[\begin{pmatrix}
1&0&0&0\\
\frac14&\frac12&\frac{1}{16}&\frac{3}{16}\\
\frac{3}{16}&\frac{1}{16}&\frac12&\frac14\\
0&0&0&1
\end{pmatrix}.\]
Writing $h_{ij}$ for the probability of hitting $j$ starting at $i$, we have
\[
 h_{10}=\tfrac{1}{16}+\tfrac{1}{2}h_{10},\qquad
 h_{1\star} = \tfrac{1}{4}+\tfrac{1}{2}h_{1\star},
\]
giving $h_{10}=\tfrac{1}{8}$, $h_{1\star}=\tfrac{1}{2}$. Then
\[
 h_{00}=\tfrac{1}{2}+\tfrac{1}{16}h_{10}=\tfrac{65}{128},\qquad
 h_{0\star}=\tfrac{3}{16}+\tfrac{1}{16}h_{1\star}=\tfrac{7}{32}.
\]
Hence,
\[
 \tfrac{65}{128}\le \rho \le \tfrac{65}{128}+\tfrac{7}{32}=\tfrac{93}{128}.
\]
Using this method with $n=2^{18}$ gives a lower bound of $0.51580258$, which appears to be very close to
the true value of~$\rho$. However the upper bound of $0.54543568$ obtained by this method
seems to still be very far from the truth.

If we make the assumption
that hitting 0 before first taking a negative value is a decreasing function of the starting point, we
can amalgamate the states $n-1$ and $\star$ in the above model, giving transitions out of
that state as if they were from $n-1$. Unfortunately we do not have a proof that $h_{i0}$ is decreasing in~$i$,
so this does not give a rigorous bound on~$\rho$. Nevertheless, for $n=2^{18}$ we obtain a (non-rigorous)
upper bound of $0.51580289$ by this method, which does seem much closer to the true value.
An even less rigorous estimate can be obtained by applying Richardson extrapolation
to $h_{00}$ in terms of $1/n$, which gives the estimate 
\[
 \rho\approx 0.515802638089141858504490255841,
\]
and corresponds to a value of
\[
 \cdeg\approx 0.099094083237488745361449340935.
\]
These approximations do not appear to correspond to any number with a simple closed form expression.

\section{Concluding remarks}
\label{sec:conclusion}

We have given a precise asymptotic for the number of graphical sequences.
Similar asymptotics are known for tournaments, but not for other natural classes such as uniform hypergraphs and digraphs. 
Another interesting direction with open problems is the asymptotics of the number of graphical partitions of an integer $N$ (that is, the number of graphic sequences which sum to $N$). 

We outline some more directions for future research below.
\paragraph{An upper bound in Lemmas \ref{lem:useful} and \ref{lem:useful2}}
For $x\in \R_{>0}^n$, let 
\[\textstyle
  A(x)=\big\{(\sigma,s):\sigma \in S_n,\,s\in \{-1,1\}^n,\,
  \sum_{i=1}^k s_ix_{\sigma(i)}\ge 0 \text{ for all }k\in [n]\big\}.
\]
Lemma \ref{lem:useful} states that $|A(x)|\ge (2n-1)!!$, with equality for all $x$ for which all sums $\sum_{i\in S}x_i$ are distinct (for distinct $S\subseteq[n]$).
\begin{conjecture}
 \label{conj:comb}
 For $x\in \R_{>0}^n$,
 $|A(x)|$ is maximized when $x_1=\dots=x_n$.
\end{conjecture}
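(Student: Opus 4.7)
The plan is to establish Conjecture \ref{conj:comb} by proving that $|A(x)|$ is a Schur-concave function of $x$. The function $|A|$ is clearly symmetric in its arguments and is scale-invariant, since rescaling $x$ by a positive constant preserves every partial-sum inequality. Because the constant vector is majorized by every vector with the same coordinate sum, Schur-concavity would immediately force the maximum on the simplex to be attained at the all-equal point $(1,\dots,1)$. A ballot/Catalan count gives $|A(1,\dots,1)| = n!\binom{n}{\lfloor n/2 \rfloor}$, so this would yield the conjectured bound and one can check the statement by hand for $n\le 3$.

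By the Muirhead / Hardy--Littlewood--P\'olya characterization, it suffices to show that $|A|$ does not decrease under a Robin-Hood transfer: for any $i \neq j$ with $x_i < x_j$ and any $t \in [0,(x_j - x_i)/2]$, replacing $(x_i, x_j)$ by $(x_i + t, x_j - t)$ weakly increases $|A|$. Fixing all coordinates other than $x_i, x_j$ and writing $(x_i, x_j) = (\bar x - u, \bar x + u)$, the function $u \mapsto |A|$ is piecewise constant, with breakpoints at the values of $u$ where some partial sum $\sum_{\ell=1}^k s_\ell x_{\sigma(\ell)}$ changes sign as a linear function of $u$. I would aim to prove monotonicity by pairing, at each breakpoint, the pairs $(\sigma, s)$ that leave $A(x)$ as $|u|$ decreases with those that enter, via an involution that transposes the positions of $i$ and $j$ in $\sigma$ and flips the associated signs, so as to send each loss to a gain of equal weight. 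One then needs to verify that for each pair $(\sigma,s)$ that is valid on one side of the breakpoint but not the other, its image under this involution is valid on the opposite side.

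The main obstacle is the combinatorial bookkeeping at the breakpoints: a single value of $u$ can be critical for many triples $(\sigma, s, k)$ simultaneously, and the natural involution fails to deliver a match when $i$ and $j$ lie on the same side of position $k$ (the transposition then leaves the critical partial sum unchanged, and there is no natural "partner" pair). If a direct combinatorial argument proves intractable, an alternative approach is to write $|A(x)| = 2^n n! \, P(x)$, where $P(x)$ is the probability that the exchangeable sequence $(s_i x_{\sigma(i)})_{i=1}^n$ has all nonnegative partial sums, and to invoke Sparre Andersen-type identities. Sparre Andersen's theorem already yields the lower bound $P(x) \geq \binom{2n}{n}/4^n$ underlying Lemma \ref{lem:useful}; the challenge is to reverse the direction of inequality, producing the matching upper bound $P(x) \leq \binom{n}{\lfloor n/2 \rfloor}/2^n$ by exploiting the extremal symmetry of the all-equal case.
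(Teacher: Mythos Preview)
This statement is presented in the paper as an open \emph{conjecture}, not as a theorem; the paper offers no proof and explicitly lists it under ``directions for future research.'' There is therefore no paper proof to compare against, and any valid argument you produce would be a new result.

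Your proposal, however, is not a proof but a strategy with self-identified gaps. You reduce the question to Schur-concavity and then to monotonicity under Robin-Hood transfers, proposing to handle each breakpoint by an involution that swaps the positions of $i$ and $j$ and flips the relevant signs. You then correctly observe that this involution does nothing useful when $i$ and $j$ both lie among $\{\sigma(1),\dots,\sigma(k)\}$ (or both outside): the critical $k$th partial sum is unchanged, so a pair that exits $A(x)$ at the breakpoint is sent to another pair that exits, not to one that enters. This is not a technicality to be tidied up later---it is precisely the case that carries the content, and without a replacement pairing (or a completely different mechanism) the argument simply does not go through. Your fallback via Sparre Andersen is likewise only a restatement of the problem: the cycle lemma gives the lower bound of Lemma~\ref{lem:useful} essentially for free, but there is no known Sparre Andersen-type identity that produces an \emph{upper} bound of the required form, and you do not indicate one.

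In short: the paper has no proof for you to match, and what you have written is a reasonable plan of attack with an honestly flagged obstruction, not a proof. If you want to pursue the Schur-concavity route, the real work is to find an injection from $A(x)\setminus A(x')$ into $A(x')\setminus A(x)$ at a generic breakpoint that also covers the ``same-side'' case; the transposition-plus-sign-flip map is not it.
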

Rephrased in probabilistic terms, we conjecture that the simple symmetric random walk has the highest probability of staying non-negative, amongst all random processes with exchangeable increments $(X_1,\dots,X_n)\in (\R\backslash\{0\})^n$ of which the law is invariant under sign changes of the elements. 

This conjecture would imply (see \eqref{e:ulbounds}) that for all 
 exchangeable $(X_1,\dots,X_n)\in (\R\backslash\{0\})^n$ of which the law is invariant under sign changes of the elements, 
  \[ \frac{1}{\sqrt{\pi (n+1/2)}}\le \Prb\left(\sum_{i=1}^k X_i \ge 0\text{ for all }k\in [n]\right)\le \frac{\sqrt{2}}{\sqrt{\pi n}}.\]
 In particular, for this very general class of random processes (that contains all symmetric random walks), the order of the probability of staying non-negative does not depend on the law, or even the tail behaviour of the increments.
 
\paragraph{Uniformly random graphic sequences}
A natural next question, that we intend to answer in future work, is the (asymptotic) law of a uniformly random graphic sequence of length $n$. We now make some observations using our reformulation and discuss a potential strategy to answer this question. Firstly, note that a uniform lattice path has the law of a simple symmetric random walk bridge with $2n$ steps that straddles the line $y=n-x$, and in particular, for large $n$, its fluctuations away from the line $y=n-x$ are of order $n^{1/2}$. In fact, for any $\delta>0$, it is exponentially unlikely that at some point the lattice path is at distance more than $\delta n$ from the line  $y=n-x$. This means that even after conditioning on the lattice path encoding a graphic sequence (i.e.\ conditioning on an event with probability $\Theta(n^{-1/4})$), it is exponentially unlikely that at some point the lattice path is at distance more than $\delta n$ from the line  $y=n-x$, and in general, the large deviations of a uniformly random graphic sequence are completely described by the large deviations of a uniformly random lattice path. (Observe that this implies that a uniform graphic sequence is very different from the degree sequence of a uniform graph, of which the corresponding lattice path stays close to the horizontal line $y=n/2$.)

Therefore, to observe the difference between a uniform graphic sequence and a uniform lattice path, we will need to consider their more fine-grained behaviour, for example by studying the scaling limit of their fluctuations around the line $y=n-x$. We conjecture the following.
\begin{conjecture}
 Let $D_1\ge\dotsb\ge D_n$ be a uniformly random graphic sequence of length $n$. Then, there exists a random continuous function $D$ from $[0,1]$ to $\R$ such that 
 \[\left(n^{-1/2}(D_{\lfloor tn\rfloor}-(1-t)n),0\le t\le 1\right)\overset{d}{\to}\left(D_t,0\le t\le 1\right)\]
 in the uniform topology.
\end{conjecture}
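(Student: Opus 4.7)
The plan is to work directly with the lattice-path encoding of the sequence and to exhibit the scaling limit as a Doob-transformed Brownian bridge. A uniform random lattice path from $(0,n-1)$ to $(n,0)$ is, via the $\pm1$ encoding of right/down steps, a simple symmetric random walk bridge $(S_\tau)_{0\le\tau\le2n-1}$ from $0$ to $1$. A short calculation gives $D_i = n-1-i+S_{R_i}$, where $R_i = 2i-S_{R_i}$ is the index of the $i$-th right step; in particular $R_i = 2i+O_{\mathbb{P}}(\sqrt{n})$, so for the \emph{unconditioned} uniform lattice path the standard invariance principle immediately yields $n^{-1/2}(D_{\lfloor tn\rfloor}-(1-t)n)\overset{d}{\to}\sqrt{2}\,B_t$, where $B$ is a standard Brownian bridge on $[0,1]$.

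Second, I would translate the dominating condition into an event on $S$. Tracing through the bijection of Section~\ref{sec:reformulation}, one finds $\lw_k = \tfrac{1}{2}(S_{2n-k-1}-S_k-1)$ for $0\le k\le n-1$, so under Brownian rescaling $n^{-1/2}\lw_{\lfloor sn\rfloor}\overset{d}{\to}\tilde B_s := \tfrac{1}{\sqrt{2}}(B_{1-s/2}-B_{s/2})$, and the discrete event $\{\lwa_1,\dots,\lwa_{n-1}\ge 0\}$ scales to
\[
 \mathcal{C}\;:=\;\Bigl\{\int_0^s \tilde B_r\,dr\ge0\ \text{for all}\ s\in[0,1]\Bigr\}.
\]
This is a null event in the limit, consistent with the $\Theta(n^{-1/4})$ persistence probability at the discrete level furnished by Proposition~\ref{prop:areanonneg}.

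Third, I would establish the invariance principle for the bridge conditioned on $\{\lwa_1,\dots,\lwa_{n-1}\ge 0\}$. Since $\mathcal{C}$ is null in the limit, the conditioned laws do not converge to any naive conditioning of $B$; they should instead converge to a Doob $h$-transform of the Brownian bridge, with the harmonic function built from the asymptotics of the persistence probability of the integrated bridge given its terminal state. A refinement of Proposition~\ref{prop:areanonneg} that yields the joint asymptotics of $\Prb(\lwa_1,\dots,\lwa_n\ge 0\mid \lw_n=y,\lwa_n=a)$ — in the spirit of \cite{Vysotsky2014} and later work of Caravenna--Deuschel and Duraj--Wachtel on integrated random walks conditioned to stay positive — would identify the required harmonic function. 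Finite-dimensional convergence would then follow from a local-limit computation; tightness in the uniform topology should follow from the modulus of continuity of the unconditioned bridge together with the quantitative control on hitting times given by Lemmas~\ref{lem:hittingtimeat0} and~\ref{lem:convtorayleigh}.

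Composing the three steps and using $D_i-(1-t)n\approx S_{R_i}-1$ would yield
\[
 n^{-1/2}\bigl(D_{\lfloor tn\rfloor}-(1-t)n\bigr)_{t\in[0,1]}
 \;\overset{d}{\longrightarrow}\;\sqrt{2}\,\bigl(B^{\mathcal{C}}_t\bigr)_{t\in[0,1]}
\]
in $C[0,1]$, where $B^{\mathcal{C}}$ denotes the Doob-conditioned Brownian bridge described above. The main obstacle is the third step: while invariance principles for random walks conditioned on area positivity are by now well-developed, the \emph{bridge} version is substantially more delicate because the pinned endpoint interacts with the integrated-positivity boundary, and the persistence-probability asymptotics of Proposition~\ref{prop:areanonneg} must be sharpened to capture the joint dependence on the terminal position and area before the appropriate harmonic function can be written down explicitly.
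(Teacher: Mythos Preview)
This statement is presented in the paper as an \emph{open conjecture}, not a theorem: the paper offers no proof, only a paragraph of heuristics saying that the limit $D$ should be a Brownian bridge conditioned on a continuous analogue of the dominating condition (and equivalently described via the two-path reformulation of Section~\ref{sec:reformulation}). So there is no proof in the paper to compare your attempt against.

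That said, your sketch is the natural line of attack and matches the paper's heuristic exactly. Your computations $D_i=n-1+i-R_i=n-1-i+S_{R_i}$ and $\lw_k=\tfrac12(S_{2n-1-k}-S_k-1)$ are correct, and the identification of the continuum event $\mathcal{C}=\{\int_0^s\tilde B_r\,dr\ge0\text{ for all }s\}$ with $\tilde B_s=\tfrac{1}{\sqrt{2}}(B_{1-s/2}-B_{s/2})$ is precisely what the paper means by ``a continuous version of the dominating condition''. You are also right that the parity condition is asymptotically irrelevant at this scale (Lemma~\ref{lem:parity}).

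You have also correctly located the genuine gap: step~3. Conditioning a Brownian bridge on the null event $\mathcal{C}$ requires a rigorous construction (Doob $h$-transform or a direct limit of discrete conditioned laws), and for that one would need a \emph{local} refinement of Proposition~\ref{prop:areanonneg} giving the asymptotics of $\Prb(\lwa_1,\dots,\lwa_n\ge0\mid \lw_n=y,\lwa_n=a)$ jointly in $(y,a)$, not just at $(0,0)$. Neither this paper nor the references you cite supply that for the bridge case; the existing invariance principles for integrated walks conditioned to stay positive (Denisov--Wachtel, Duraj--Wachtel, etc.) treat free walks or meanders rather than bridges with a pinned endpoint, and the interaction between the pinning and the integrated-positivity boundary is exactly the missing ingredient. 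Until that is supplied, the conjecture remains open, and your write-up should present this as a strategy rather than a proof.
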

We expect $D$ to have two characterizations. Firstly, it can be defined as a Brownian bridge conditioned to satisfy a continuous version of the dominating condition \eqref{eq:DC}.  Secondly, via a continuous version of the reformulation described in Section \ref{sec:reformulation}, $D$ can be constructed via two paths, for which the distance between them is distributed as a conditioned Brownian bridge and their midpoint is determined by a Brownian motion (the Brownian motion plays the role of the `lazy steps' that can go either right or down). This result would add graphic sequences to the long and varied list of uniformly random combinatorial structures with a `Brownian' scaling limit; examples are numerous models of trees of which the scaling limit can be described by a Brownian excursion (see the survey paper \cite{LeGall}),  mappings with a limit described by the Brownian bridge \cite{randommappings}, various classes of maps with limits encoded by the Brownian snake (see the survey paper \cite{surveyRandomMaps}) and pattern-avoiding permutations with limits described by a Brownian excursion (see \cite{randompermutations} and references therein).   

Such scaling limits are interesting in their own right, but can also be exploited to answer questions about the corresponding combinatorial class, such as `what proportion of Cayley trees of size $n$ have height exceeding $tn^{1/2}$?' or `in what proportion of maps from $[n]$ to $[n]$ is the average distance to a cycle larger than $tn^{1/2}$?'.

\paragraph{Persistence probabilities of integrated random processes}
The study of the probability that integrated random walks and random walk bridges stay non-negative started with the work of Sina\u{\i} on the SSRW \cite{Sinai1992} and has attracted a lot of attention in the past decade (\cite{AurzadaDereichLifshits2014,Dembo2013,Denisov2015, Gao2014,Vysotsky2010,Vysotsky2014}; see also the survey paper \cite{Aurzada2015}).  However, all work on integrated random walk bridges only finds the right order of the persistence probability \cite{AurzadaDereichLifshits2014,Vysotsky2014} and for random walks the sharp asymptotics (including the value of the constant) are only known under a $(2+\delta)$-moment condition on the step distribution. 
Our methods completely carry over to the setting of SSRW bridges, yielding the following result of independent interest. Let $\uw$ be a SSRW and let $\uwa$ be its area process.
\begin{prop}\label{prop:persistenceSSRW}
 We have that 
  \[
  n^{1/4}\Prb(\uwa_1,\dots,\uwa_{2n}\ge 0\mid \uw_{2n}=0) \to \frac{\Gamma(3/4)}{\sqrt{2\pi(1-\hat{\rho})}}
 \]
 as $n\to \infty$, for $\hat{\rho}$ the probability that the random walk with steps distributed as the signed area of the excursions of a SSRW hits $0$ before first taking a negative value.
\end{prop}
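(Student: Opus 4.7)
The plan is to mimic the proof of Proposition \ref{prop:areanonneg} in Section \ref{subsec:proofoverview}, using the SSRW $\uw$ in place of the lazy walk $\lw$ throughout. Let $\ub$ denote the first $2n$ steps of $\uw$ conditioned on $\{\uw_{2n}=0\}$, let $\hat{N}_n=|\{1\le k\le 2n:\ub_k=0\}|$, and let $\ubae_1,\dots,\ubae_{\hat{N}_n}$ be the signed areas of the successive excursions of $\ub$ away from~$0$. Since $\uba$ is monotone on each excursion,
\[
\Prb(\uba_1,\dots,\uba_{2n}\ge 0)=\Prb(\ubac_1,\dots,\ubac_{\hat{N}_n}\ge 0),
\]
where $\ubac_k=\sum_{i=1}^k\ubae_i$.

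Next, perturb by setting $\pbae_i=\ubae_i+\eps_i$ with $\eps_i\sim\Unif[-\tfrac{1}{4n},\tfrac{1}{4n}]$ independent of everything else, and put $\pbac_k=\sum_{i=1}^k\pbae_i$. Lemma \ref{lem:useful2} gives
\[
\Prb(\pbac_1,\dots,\pbac_{\hat{N}_n}\ge 0\mid \hat{N}_n)=\tfrac{(2\hat{N}_n-1)!!}{2^{\hat{N}_n}\hat{N}_n!}.
\]
The coupling $\lw_i=\tfrac12\uw_{2i}$ preserves zeros, so $\hat{N}_n$ has the same law as the quantity $N_n$ in Lemma \ref{lem:convtorayleigh}; hence $n^{-1/2}\hat{N}_n\overset{d}{\to}2\sqrt{E}$, and the proof of Lemma \ref{lem:probnonnegperturbed} applies verbatim to give $\E[(2\hat{N}_n-1)!!/(2^{\hat{N}_n}\hat{N}_n!)]\sim n^{-1/4}\Gamma(3/4)/\sqrt{2\pi}$. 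Because the $\ubae_i$ are integers and the perturbations are much smaller than~$1$, the event $\{\pbac_k<0\}$ can occur only at indices $k$ with $\ubac_k=0$. Letting $\hat{M}_n=|\{k>0:\ubac_k=0\}|$ and applying Lemma \ref{lem:useful2} to the increments of $\pbac$ between successive zeros of $\ubac$ yields
\[
\Prb(\pbac_1,\dots,\pbac_{\hat{N}_n}\ge 0\mid \ubac\ge 0,\hat{M}_n=M)=\tfrac{(2M-1)!!}{2^MM!}.
\]

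It remains to show that, under the conditioning $\{\ubac\ge 0\}$, $\hat{M}_n$ converges in distribution to $\Geom(\hat\rho)$. This is done by repeating the proofs of Lemmas \ref{lem:hittingtimeat0}, \ref{lem:ratioq_n} and \ref{lem:pn_to_rho} with $\uw$ in place of $\lw$. The only substantive inputs needed are a local limit theorem for the joint law of $(\uw_{2n},\uwa_{2n})$, analogous to Lemma \ref{lem:locallimitlazy} with a modified covariance constant, together with the bound $\Prb(\uwa_{2n}=\uw_{2n}=0,\uwa_1,\dots,\uwa_{2n}\ge 0)=O(n^{-5/2})$, which is precisely Theorem~1 of \cite{AurzadaDereichLifshits2014} applied directly to the SSRW (no adaptation required). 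Once these are in hand the same recursive argument as in Lemma \ref{lem:pn_to_rho} gives $\Prb(\hat M_n\ge 1\mid \ubac\ge 0)\to\hat\rho$, hence $\Prb(\pbac\ge 0\mid \ubac\ge 0)\to\sqrt{1-\hat\rho}$. Combining with the displayed formulas yields $\Prb(\ubac\ge 0)\sim n^{-1/4}\Gamma(3/4)/\sqrt{2\pi(1-\hat\rho)}$, as claimed.

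The main obstacle is dealing with the parity issue that $\uw$ returns to $0$ only at even times, so that the analogues of the ratio estimates on $\varphi_\ell(k)=\Prb(\uw_\ell=k)$ used in Lemma \ref{lem:ratioq_n} must be restricted to the even sublattice and the relevant indices $m,b$ chosen to respect parity. This is purely bookkeeping: laziness was used in the lazy proof only to ensure $\Prb(\lw_n=0)>0$ for every $n$, and none of the reflection-principle enumerations, local limit bounds, or exchangeability-based applications of Lemma \ref{lem:useful2} depend on the precise step distribution.
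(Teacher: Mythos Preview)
Your proposal is correct and follows exactly the approach the paper has in mind: the paper does not give a separate proof of Proposition~\ref{prop:persistenceSSRW} but simply asserts that ``our methods completely carry over to the setting of SSRW bridges'', and you have spelled out precisely how. In particular, you correctly observe that via the coupling $\lw_i=\tfrac12\uw_{2i}$ the excursion count $\hat N_n$ has the \emph{same} law as $N_n$, so Lemmas~\ref{lem:convtorayleigh} and~\ref{lem:probnonnegperturbed} literally apply without change; that the local limit theorem and the $O(n^{-5/2})$ bound needed for the SSRW analogue of Lemma~\ref{lem:upperboundintegral0} are already in \cite{AurzadaDereichLifshits2014} (indeed the paper's lazy versions were adapted \emph{from} the SSRW case); and that the only genuine bookkeeping is restricting the change-of-measure ratios $\varphi_\ell(k)$ in the analogue of Lemma~\ref{lem:ratioq_n} to the even sublattice.
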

Adapting the method to numerically estimate the value of $\rho$ described in Section~\ref{subsec:rho} shows that $\hat{\rho}\approx 0.0773408571485249705089600725$.

This approximation can be confirmed with a more direct expression for $\hat\rho$ that appears in a follow-up work by the second author and Kolesnik \cite{Kolesnik24}. There it is shown that 
$\hat\rho = 1-e^{-\hat\xi}$ where \[\hat\xi=\sum_{n=1}^\infty \frac{1}{4n^2 2^{4n}} \sum_{d\mid 2n} \binom{2d-1}{d}\phi(2n/d)\]
with $\phi$ Euler's totient function.

It is possible that our techniques can be generalized to other models of random bridges. However, there are some difficulties to overcome, both conceptual and technical. Our proof relies heavily on Lemma~\ref{lem:useful2}, which requires the areas of different excursions of the bridge to be exchangeable and for their law to be invariant under sign changes. When only considering random walk bridges, these conditions combined restrict the method to (scalings of) symmetric processes with steps in $\{-1,0,1\}$, 
so new ideas are needed to adapt the method to other random walk bridges. 
 
\paragraph{Acknowledgements} The exact values of $G(n)$ were computed using the computational facilities of the \href{http://www.bris.ac.uk/acrc/}{Advanced Computing Research Centre, University of Bristol}. We would like to thank Peter Winkler for pointing out the upper bound in Lemma~\ref{lem:useful}, and the anonymous referee for their helpful comments.
 
\bibliography{bibliography.bib}

\begin{thebibliography}{47}
\providecommand{\natexlab}[1]{#1}
\providecommand{\url}[1]{\texttt{#1}}
\expandafter\ifx\csname urlstyle\endcsname\relax
  \providecommand{\doi}[1]{doi: #1}\else
  \providecommand{\doi}{doi: \begingroup \urlstyle{rm}\Url}\fi

\bibitem[Abraham et~al.(2015)Abraham, Bettinelli, Collet, and
  Kortchemski]{surveyRandomMaps}
C\'{e}line Abraham, J\'{e}r\'{e}mie Bettinelli, Gwendal Collet, and Igor
  Kortchemski.
\newblock Random maps.
\newblock In \emph{Mod\'{e}lisation {A}l\'{e}atoire et
  {S}tatistique---{J}ourn\'{e}es {MAS} 2014}, volume~51 of \emph{ESAIM Proc.
  Surveys}, pages 133--149. EDP Sci., Les Ulis, 2015.

\bibitem[Aldous and Pitman(1994)]{randommappings}
David~J. Aldous and Jim Pitman.
\newblock Brownian bridge asymptotics for random mappings.
\newblock \emph{Random Structures \& Algorithms}, 5\penalty0 (4):\penalty0
  487--512, 1994.

\bibitem[Aurzada and Simon(2015)]{Aurzada2015}
Frank Aurzada and Thomas Simon.
\newblock Persistence probabilities and exponents.
\newblock In \emph{L\'{e}vy matters. {V}}, volume 2149 of \emph{Lecture Notes
  in Math.}, pages 183--224. Springer, Cham, 2015.

\bibitem[Aurzada et~al.(2014)Aurzada, Dereich, and
  Lifshits]{AurzadaDereichLifshits2014}
Frank Aurzada, Steffen Dereich, and Mikhail Lifshits.
\newblock Persistence probabilities for a bridge of an integrated simple random
  walk.
\newblock \emph{Probability and Mathematical Statistics}, 34\penalty0
  (1):\penalty0 1--22, 2014.

\bibitem[Barvinok and Hartigan(2013)]{BarvinokH13}
Alexander Barvinok and J.~A. Hartigan.
\newblock The number of graphs and a random graph with a given degree sequence.
\newblock \emph{Random Structures \& Algorithms}, 42\penalty0 (3):\penalty0
  301--348, 2013.

\bibitem[Bassan et~al.(2024)Bassan, Donderwinkel, and Kolesnik]{Bassan24}
Michal Bassan, Serte Donderwinkel, and Brett Kolesnik.
\newblock Graphical sequences and plane trees.
\newblock preprint arXiv:2406.05110, 2024.

\bibitem[Bassino et~al.(2018)Bassino, Bouvel, F\'{e}ray, Gerin, and
  Pierrot]{randompermutations}
Fr\'{e}d\'{e}rique Bassino, Mathilde Bouvel, Valentin F\'{e}ray, Lucas Gerin,
  and Adeline Pierrot.
\newblock The {B}rownian limit of separable permutations.
\newblock \emph{The Annals of Probability}, 46\penalty0 (4):\penalty0
  2134--2189, 2018.

\bibitem[Burns(2007)]{Burns2007TheNO}
Jason~Matthew Burns.
\newblock The number of degree sequences of graphs.
\newblock PhD thesis, 2007.

\bibitem[Dembo et~al.(2013)Dembo, Ding, and Gao]{Dembo2013}
Amir Dembo, Jian Ding, and Fuchang Gao.
\newblock Persistence of iterated partial sums.
\newblock \emph{Annales de l'Institut Henri Poincar\'{e} Probabilit\'{e}s et
  Statistiques}, 49\penalty0 (3):\penalty0 873--884, 2013.

\bibitem[Denisov and Wachtel(2015)]{Denisov2015}
Denis Denisov and Vitali Wachtel.
\newblock Exit times for integrated random walks.
\newblock \emph{Annales de l'Institut Henri Poincar\'{e} Probabilit\'{e}s et
  Statistiques}, 51\penalty0 (1):\penalty0 167--193, 2015.

\bibitem[Donderwinkel and Kolesnik(2024)]{Kolesnik24}
Serte Donderwinkel and Brett Kolesnik.
\newblock Asymptotics for sina\u{i} excursions.
\newblock preprint arXiv:2403.12941, 2024.

\bibitem[Erd\H{o}s and Gallai(1960)]{ErdosGallai}
Paul Erd\H{o}s and Tibor Gallai.
\newblock Graphs with prescribed degrees of vertices [{H}ungarian].
\newblock \emph{Mat. Lapok.}, 11:\penalty0 264--274, 1960.

\bibitem[Erd\H{o}s and Richmond(1993)]{ErdosRichmond93}
Paul Erd\H{o}s and Lawrence~Bruce Richmond.
\newblock On graphical partitions.
\newblock \emph{Combinatorica}, 13\penalty0 (1):\penalty0 57--63, 1993.

\bibitem[Feller(1971)]{Feller1971}
William Feller.
\newblock \emph{An introduction to probability theory and its applications,
  {V}olume {II}}.
\newblock John Wiley \& Sons, Inc., second edition, 1971.

\bibitem[Gall(2005)]{LeGall}
Jean-Fran{\c{c}}ois~Le Gall.
\newblock {Random trees and applications}.
\newblock \emph{Probability Surveys}, 2:\penalty0 245--311, 2005.

\bibitem[Gao et~al.(2014)Gao, Liu, and Yang]{Gao2014}
Fuchang Gao, Zhenxia Liu, and Xiangfeng Yang.
\newblock Conditional persistence of {G}aussian random walks.
\newblock \emph{Electronic Communications in Probability}, 19:\penalty0 1--9,
  2014.

\bibitem[Hakimi(1962)]{Hakimi}
S.~Louis Hakimi.
\newblock On realizability of a set of integers as degrees of the vertices of a
  linear graph. {I}.
\newblock \emph{Journal of the Society for Industrial and Applied Mathematics},
  10:\penalty0 496--506, 1962.

\bibitem[H\"{a}sselbarth(1984)]{hasselbarth1984verzweigtheit}
Werner H\"{a}sselbarth.
\newblock Die {V}erzweigtheit von {G}raphen.
\newblock \emph{Match}, \penalty0 (16):\penalty0 3--17, 1984.

\bibitem[Havel(1955)]{Havel}
V\'{a}clav Havel.
\newblock Eine {B}emerkung \"{u}ber die {E}xistenz der endlichen {G}raphen.
\newblock \emph{\v{C}eskoslovensk\'{a} Akademie V\v{e}d. \v{C}asopis Pro
  P\v{e}stov\'{a}n\'{\i} Matematiky}, 80:\penalty0 477--480, 1955.

\bibitem[Iv{\'a}nyi and Lucz(2011)]{ivanyi2011}
Antal Iv{\'a}nyi and Lor{\'a}nd Lucz.
\newblock Erd{\H{o}}s--{G}allai test in linear time.
\newblock \emph{Combinatorica}, page~4, 2011.

\bibitem[Iv{\'a}nyi et~al.(2012)Iv{\'a}nyi, Lucz, Matuszka, and
  Pirzada]{Ivanyi2012}
Antal Iv{\'a}nyi, Lor{\'a}nd Lucz, Tam{\'a}s Matuszka, and Shariefuddin
  Pirzada.
\newblock Parallel enumeration of degree sequences of simple graphs.
\newblock \emph{Acta Universitatis Sapientiae, Informatica}, 4\penalty0
  (2):\penalty0 260--288, 2012.

\bibitem[Iványi et~al.(2013)Iványi, Gombos, Lucz, and Matuszka]{Ivanyi2013}
Antal Iványi, Gergő Gombos, Loránd Lucz, and Tamás Matuszka.
\newblock Parallel enumeration of degree sequences of simple graphs ii.
\newblock \emph{Acta Universitatis Sapientiae, Informatica}, 5\penalty0
  (2):\penalty0 245--270, 2013.

\bibitem[Kim and Pittel(2000)]{KimPittel}
Jeong~Han Kim and Boris Pittel.
\newblock Confirming the {K}leitman-{W}inston conjecture on the largest
  coefficient in a {$q$}-{C}atalan number.
\newblock \emph{Journal of Combinatorial Theory. Series A}, 92\penalty0
  (2):\penalty0 197--206, 2000.

\bibitem[Kleitman(1970)]{kleitman1970number}
Daniel~J. Kleitman.
\newblock The number of tournament score sequences for a large number of
  players.
\newblock In \emph{Combinatorial {S}tructures and their {A}pplications ({P}roc.
  {C}algary {I}nternat. {C}onf., {C}algary, {A}lta., 1969)}, pages 209--213.
  Gordon and Breach, New York, 1970.

\bibitem[Kolesnik(2023)]{Kolesnik22}
Brett Kolesnik.
\newblock The asymptotic number of score sequences.
\newblock \emph{Combinatorica}, 43\penalty0 (4):\penalty0 827--844, 2023.

\bibitem[Lu and Bressan(2011)]{LuBressan11}
Xuesong Lu and St{\'e}phane Bressan.
\newblock Generating random graphic sequences.
\newblock In \emph{Database Systems for Advanced Applications}, pages 570--579.
  Springer, Berlin, Heidelberg, 2011.

\bibitem[MacMahon(1920)]{macmahon1920american}
Percy~Alexander MacMahon.
\newblock An {A}merican tournament treated by the calculus of symmetric
  functions.
\newblock \emph{Quarterly Journal of Pure and Applied Mathematics},
  49:\penalty0 1--36, 1920.

\bibitem[McKay(1985)]{McKay1985}
Brendan~D. McKay.
\newblock Asymptotics for symmetric {$0$}-{$1$} matrices with prescribed row
  sums.
\newblock \emph{Ars Combinatoria}, 19\penalty0 (A):\penalty0 15--25, 1985.

\bibitem[McKay and Wormald(1990)]{McKay_Wormald90}
Brendan~D. McKay and Nicholas~C. Wormald.
\newblock Asymptotic enumeration by degree sequence of graphs of high degree.
\newblock \emph{European Journal of Combinatorics}, 11\penalty0 (6):\penalty0
  565--580, 1990.

\bibitem[McKay and Wormald(1991)]{McKay_Wormald91}
Brendan~D. McKay and Nicholas~C. Wormald.
\newblock Asymptotic enumeration by degree sequence of graphs with degrees
  {$o(n^{1/2})$}.
\newblock \emph{Combinatorica}, 11\penalty0 (4):\penalty0 369--382, 1991.

\bibitem[Melczer et~al.(2021)Melczer, Michelen, and
  Mukherjee]{MelczerMichelenMukherjee20}
Stephen Melczer, Marcus Michelen, and Somabha Mukherjee.
\newblock Asymptotic bounds on graphical partitions and partition
  comparability.
\newblock \emph{International Mathematics Research Notices}, \penalty0
  (4):\penalty0 2842--2860, 2021.

\bibitem[Moon(1968)]{moon1968topics}
John~W. Moon.
\newblock \emph{Topics on tournaments}.
\newblock Holt, Rinehart and Winston, 1968.

\bibitem[Moser(1971)]{Moser}
Leo Moser.
\newblock Asymptotics of tournament scores.
\newblock In \emph{Combinatorics ({P}roc. {S}ympos. {P}ure {M}ath., {V}ol.
  {XIX}, {U}niv. {C}alifornia, {L}os {A}ngeles, {C}alif., 1968)}, pages
  165--166. Amer. Math. Soc., Providence, R.I., 1971.

\bibitem[Nash-Williams(1969)]{nash-williamsValency}
Crispin St. John~Alvah Nash-Williams.
\newblock Valency sequences which force graphs to have {H}amiltonian circuits.
\newblock \emph{University of Waterloo Research Report, Waterloo, Ontario},
  1969.

\bibitem[Pittel(1999)]{Pittel99}
Boris Pittel.
\newblock Confirming two conjectures about the integer partitions.
\newblock \emph{Journal of Combinatorial Theory. Series A}, 88\penalty0
  (1):\penalty0 123--135, 1999.

\bibitem[Rousseau and Ali(1995)]{rousseau1995note}
Cecil~C. Rousseau and Firasath Ali.
\newblock A note on graphical partitions.
\newblock \emph{Journal of Combinatorial Theory. Series B}, 64\penalty0
  (2):\penalty0 314--318, 1995.

\bibitem[Royle(2006)]{royle}
Gordon Royle.
\newblock The on-line encyclopedia of integer sequences, 2006.

\bibitem[Ruskey et~al.(1994)Ruskey, Cohen, Eades, and Scott]{ruskey1994}
Frank Ruskey, Robert Cohen, Peter Eades, and Aaron Scott.
\newblock Alley {CAT}s in search of good homes.
\newblock In \emph{Proceedings of the {T}wenty-fifth {S}outheastern
  {I}nternational {C}onference on {C}ombinatorics, {G}raph {T}heory and
  {C}omputing ({B}oca {R}aton, {FL}, 1994)}, volume 102, pages 97--110, 1994.

\bibitem[Sierksma and Hoogeveen(1991)]{sierksma1991seven}
Gerard Sierksma and Han Hoogeveen.
\newblock Seven criteria for integer sequences being graphic.
\newblock \emph{Journal of Graph Theory}, 15\penalty0 (2):\penalty0 223--231,
  1991.

\bibitem[Sina\u{\i}(1992)]{Sinai1992}
Yakov Sina\u{\i}.
\newblock Distribution of some functionals of the integral of a random walk.
\newblock \emph{Teoreticheskaya i Matematicheskaya Fizika}, 90\penalty0
  (3):\penalty0 323--353, 1992.

\bibitem[Stanley(1991)]{Stanley}
Richard~P. Stanley.
\newblock A zonotope associated with graphical degree sequences.
\newblock In \emph{Applied geometry and discrete mathematics}, volume~4 of
  \emph{DIMACS Ser. Discrete Math. Theoret. Comput. Sci.}, pages 555--570.
  Amer. Math. Soc., Providence, RI, 1991.

\bibitem[Vysotsky(2010)]{Vysotsky2010}
Vladislav Vysotsky.
\newblock On the probability that integrated random walks stay positive.
\newblock \emph{Stochastic Processes and their Applications}, 120\penalty0
  (7):\penalty0 1178--1193, 2010.

\bibitem[Vysotsky(2014)]{Vysotsky2014}
Vladislav Vysotsky.
\newblock Positivity of integrated random walks.
\newblock \emph{Annales de l'Institut Henri Poincar\'{e} Probabilit\'{e}s et
  Statistiques}, 50\penalty0 (1):\penalty0 195--213, 2014.

\bibitem[Wang(2019{\natexlab{a}})]{Wang19}
Kai Wang.
\newblock Efficient counting of degree sequences.
\newblock \emph{Discrete Mathematics}, 342\penalty0 (3):\penalty0 888--897,
  2019{\natexlab{a}}.

\bibitem[Wang(2019{\natexlab{b}})]{Wang2019b}
Kai Wang.
\newblock An improved algorithm for counting graphical degree sequences of
  given length.
\newblock \emph{2019 International Conference on Computational Science and
  Computational Intelligence (CSCI)}, pages 451--456, 2019{\natexlab{b}}.

\bibitem[Winston and Kleitman(1983)]{KleitmanWinston}
Kenneth~J. Winston and Daniel~J. Kleitman.
\newblock On the asymptotic number of tournament score sequences.
\newblock \emph{Journal of Combinatorial Theory. Series A}, 35\penalty0
  (2):\penalty0 208--230, 1983.

\bibitem[Wormald(2018)]{Wormald19}
Nicholas Wormald.
\newblock Asymptotic enumeration of graphs with given degree sequence.
\newblock In \emph{Proceedings of the {I}nternational {C}ongress of
  {M}athematicians---{R}io de {J}aneiro 2018. {V}ol. {IV}. {I}nvited lectures},
  pages 3245--3264. World Sci. Publ., Hackensack, NJ, 2018.

\end{thebibliography}

\appendix
\newpage

\section{Overview of notation}
\label{sec:notation_overview}
The following notation is used in the paper.
\begin{itemize}
    \item $G(n)$ = the number of graphic sequences of length $n$.
    \item $\lw=(\lw_k)_{k\geq 0}$=lazy random walk, that is, a random process with independent and identically distributed increments that take the value $0$ with probability $1/2$, $-1$ with probability $1/4$ and $+1$ with probability $1/4$.
    \item $\lwa_k=\sum_{i=0}^k \lw_i$, so $(\lwa_k)_{k\geq 0}$ is an integrated random walk.
    \item $\lwae_i$ = area of the $i$th excursion of $\lw$.
    \item $\lwac_k=\sum_{i=1}^k \lwae_i$. We can see $\lwac$ as a `subsequence' of $\lwa$, where $\lwa\ge 0$ if and only if $\lwac\ge 0$.
    \item $\zeta_1=\inf\{k\geq1 : \lw_k=0, \lwa_k\le 0\}$.
    \item $\rho = \Prb(\lwa_{\zeta_1}=0)=$ probability that $(\lwa_k)_{k\geq 0}$ hits zero before the first time it goes negative.
    \item  $\lb=(\lb_k)_{k=0}^n$= lazy random walk bridge, that is, $(\lw_k)_{k=0}^n$ conditioned on $\lb_n=0$.
    \item $\lba,\lbae,\lbac$ the counterparts of $\lwa,\lwae,\lwac$ respectively, with $\lw$ replaced by $\lb$ in the definition.
    \item $N_n$ = number of times $\lb$ hits 0 after time 0 (so $N_n\in \{1,\dots,n\}$).
    \item $M_n$ = the number of times $\lbac$ hits zero.
    \item $\pbac,\pbae$ = perturbed variants of $\lbac$ and $\lbae$ respectively.
    \item $\xi_i$ = the $i$th time $\lbac$ hits zero.
    \item $\probareazero{n}=\Prb(M_n\ge 1\mid \lbac_1,\dots, \lbac_{N_n}\ge 0)$.
\end{itemize}

\section{Proof of Lemma \ref{lem:useful}}

\useful*
We include the proof of Lemma \ref{lem:useful} from Burns \cite{Burns2007TheNO} below for the convenience of the reader.

For $x\in \R_{>0}^n$, let 
\[\textstyle
  A(x)=\big\{(\sigma,s):\sigma \in S_n,\,s\in \{-1,1\}^n,\,
  \sum_{i=1}^k s_ix_{\sigma(i)}\ge 0 \text{ for all }k\in [n]\big\}.
\]
We will show that
\[
 |A(x)|\ge (2n-1)!!
\]
and that equality holds if, for all distinct $S,S'\subseteq [n]$, the
corresponding sums are also distinct.
 
We call a vector $y\in \R_{>0}^n$ \emph{rapidly decreasing} if
\[
 y_i > y_{i+1}+\dotsb+y_n
\]
for all $i\in [n]$. We will first use induction on $n$ to show that $|A(y)|=(2n-1)!!$ for all rapidly decreasing
$y\in \R_{>0}^n$. It is clear that $A(y)=\{(\text{Id},1)\}$ when $n = 1$, and the claim holds.

Suppose that we have shown the claim for some $n\ge 1$, and let $y\in \R_{>0}^{n+1}$. Since the sequence is
rapidly decreasing, the pair $(\sigma, s)$ is in $A(y)$ if and only if $s(i) = 1$ for all $i \in [n+1]$
such that $\sigma(i)$ is the lowest number seen so far, i.e. $\sigma(i) < \sigma(j)$ for all $j < i$.
Let $\alpha = \sigma^{-1}(n+1)$. If $\alpha \ne 1$, then the term $s_\alpha y_{n+1}$ has no impact on whether
the sequence is valid, while if $\alpha = 1$, we require $s_1=1$ and then we need the remainder of the
sequence to satisfy the condition. More formally, let $y'$, $\sigma'$ and $s'$ be what is left after
removing $y_{n+1}$, $n+1$ and $s_{\alpha}$ respectively. That is 
\[
 y'= (y_1,\dots,y_n),\quad \sigma'(i) = \sigma\big(i+\one_{\ge\alpha}(i)\big),
 \quad s' = (s_1,\dots,s_{\alpha-1},s_{\alpha+1},\dots,s_{n+1}).
\]
If $\alpha = 1$, then $(\sigma,s)\in A(y)$ if and only if $s_1=1$ and $(\sigma',s') \in A(y')$.
If $\alpha\ne 1$, then $(\sigma,s)\in A(y)$ if and only if $(\sigma',s') \in A(y')$.
Hence, there are $2n+1$ pairs $(\sigma,s)$ in $A(y)$ for each pair $(\sigma',s')$ in $A(y')$.
Clearly, $y'$ is also rapidly decreasing and the result follows by induction. 

For $x\in \R^n$ and $S\subseteq [n]$, we write $x_S=\sum_{i\in S}x_i$. We call $x\in \R_{>0}^n$
\emph{sum-distinct} if $x_S\ne x_{S'}$ for all distinct subsets $S,S'\subseteq[n]$.
Let $x\in \R^n_{>0}$ be sum-distinct. We will construct $y\in \R_{>0}^n$ that is rapidly decreasing and for which $|A(x)|=|A(y)|$.

Since $x$ is sum-distinct, we can define a total order $<_x$ on the power set $\mathcal{P}([n])$ by $S <_x S'$ if and only if
$x_S < x_{S'}$. We note that for a given $\sigma$ and~$s$, the condition $\sum_{i=1}^k s_ix_{\sigma(i)}\ge 0$
is equivalent to the condition $x_{S_-} <_x x_{S_+}$ where $S_\pm = \{\sigma(i): i \le k,\,s_i = \pm1\}$,
and hence $A(x)$ only depends on $x$ through $<_x$. 

To get to the vector $y$, we will increase $x_1$ until it is larger than $x_2+\dotsb+x_n$, and then increase
$x_1$ and $x_2$ until we also have $x_2>x_3+\dotsb+x_n$ etc. We will do this in a series of steps so that each
increase only changes the total ordering by a ``small" amount, and the following claim shows this
preserves $|A(x)|$. We defer the proof of this claim to the end of this section and first finish the current proof.

\begin{claim}\label{claim:small-change}
 Let $x$ and $x'$ be sum-distinct and assume there is a unique pair $\{L,R\}$ of disjoint subsets for which
 $L<_{x}R$ yet $R<_{x'}L$. Then $|A(x)| = |A(x')|$.
\end{claim}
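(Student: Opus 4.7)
The plan is to construct an explicit involution witnessing $|A(x)\cap C| = |A(x')\cap C|$ for a suitable set $C$, reducing everything to the trajectories actually affected by the swap. Since $A(x)$ depends on $x$ only through the order $<_x$, a pair $(\sigma,s)$ can lie in $A(x) \triangle A(x')$ only if the swapped pair $\{L,R\}$ appears as one of its prefix comparisons. Concretely, let $C$ be the set of $(\sigma,s)$ for which $\{S_-^{(k)},S_+^{(k)}\} = \{L,R\}$ at some $k$, where $S_\pm^{(k)} = \{\sigma(i):i\le k,\,s_i=\pm 1\}$; such a $k$ is automatically unique and equals $|L|+|R|$. Outside $C$ no prefix comparison involves $\{L,R\}$, so $A(x)\setminus C = A(x')\setminus C$ and the task reduces to bijecting $A(x)\cap C$ with $A(x')\cap C$.

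Within $C$ the sign pattern on the first $k$ positions is forced: for $(\sigma,s)\in A(x)\cap C$ each element of $L$ must carry sign $-1$ and each element of $R$ must carry sign $+1$ (so that $S_-^{(k)} = L <_x R = S_+^{(k)}$), and in $A(x')\cap C$ the signs are exactly reversed. I propose the involution $\phi$ that reverses the first $k$ pairs $(\sigma(i),s_i)$ and simultaneously negates the signs of those entries, leaving the tail $i>k$ unchanged. This manifestly swaps the two forced sign patterns, so it remains only to check that $\phi$ preserves membership in $A(\cdot)$.

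For this, write $P^{(m)}(u) = \sum_{j=1}^m s_j u_{\sigma(j)}$ for the old partial sums evaluated at a vector $u$, and note that a direct calculation shows the new partial sums of $\phi(\sigma,s) = (\sigma',s')$ under $x'$ are
\[
 \sum_{i=1}^{k'} s'_i x'_{\sigma'(i)} =
 \begin{cases}
  (x'_L - x'_R) + P^{(k-k')}(x'), & k' \le k, \\
  2(x'_L - x'_R) + P^{(k')}(x'),    & k' > k.
 \end{cases}
\]
Now $x'_L - x'_R > 0$ by hypothesis, and for every $m \in \{0,\dots,k-1\}\cup\{k+1,\dots,n\}$ the disjoint pair $\{S_-^{(m)}, S_+^{(m)}\}$ differs from $\{L,R\}$ (for $m<k$ the total size is too small, and for $m>k$ the pair must contain an element outside $L\cup R$). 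The hypothesis therefore forces $P^{(m)}(x')$ to have the same sign as $P^{(m)}(x)$, which is non-negative because $(\sigma,s)\in A(x)$. Both cases of the display are thus non-negative, so $\phi(\sigma,s) \in A(x')\cap C$; by symmetry $\phi$ is a bijection.

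The step I expect to be the main obstacle is the tail $k' > k$: after the swap the "height" of the walk at step $k$ changes from $x_R - x_L$ to $x'_L - x'_R$, and these positive numbers need not be equal in magnitude, so the partial sums themselves are genuinely different. The cure is the $2(x'_L - x'_R) > 0$ surplus produced by the reversal, which combined with the sign-preservation of every non-$\{L,R\}$ comparison ensures non-negativity rather than equality.
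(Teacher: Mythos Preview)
Your proof is correct and uses essentially the same involution as the paper: reverse the first $k=|L|+|R|$ coordinates of $(\sigma,s)$ and negate those signs. The paper frames the argument as a bijection between the symmetric differences $A(x)\setminus A(x')$ and $A(x')\setminus A(x)$, whereas you decompose $A(x)$ and $A(x')$ over your set $C$ and biject $A(x)\cap C$ with $A(x')\cap C$; these are equivalent packagings of the same computation, and your explicit identification of $C$ arguably makes the uniqueness-of-$k$ step slightly cleaner.
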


Suppose that we already have $x_{j} > x_{j+1}+\dotsb+x_{n}$ for all $j<i$, and we wish to extend this to include
$j=i$ as well. We will slowly increase $x_1$, $x_2$, \dots, $x_i$ so that we only change one disjoint inequality
on the power set at a time and we can use the claim above to show that $|A(x)|$ does not change. We first ensure
that no signed sums are the same by slightly perturbing $x$ by a small amount, which we choose to be small enough to not
change the order $<_x$. Indeed, if
\[
 \eps=\min_{S\ne S'}|x_S-x_{S'}|=\min_{S\ne S'}|x_{S\setminus S'}-x_{S'\setminus S}|,
\]
then perturbing each entry of $x$ by less than $\eps/n$ cannot possibly change the order $<_x$.
Let $z$ be a random vector formed by adding a small independent $\Unif[0,\eps/n]$ random variable to each entry
of~$x$, and note that $\mathord{<_x}=\mathord{<_z}$,
so $|A(x)| = |A(z)|$. Almost surely there are no two pairs of disjoint sets
$(A,B)$ and $(C,D)$ such that $z_A-z_B = z_C-z_D$, and we can order the pairs $(A_t,B_t)$ of disjoint subsets
of $[n] \setminus [i]$ such that $z_{A_t}-z_{B_t}$ is increasing in~$t$. Suppose $(A_\tau,B_\tau)$ is the
first pair for which $z_{A_\tau}-z_{B_\tau} > z_i$. If there is no such pair, then we already have
$z_i > z_{i+1}+\dotsb+ z_{n}$ (by choosing $A=[n]\setminus [i]$ and $B=\emptyset$).
Let $\delta$ be any value in the interval $(z_{A_\tau}-z_{B_\tau}-z_i, z_{A_{\tau+1}}-z_{B_{\tau+1}}-z_i)$
(or in $(z_{A_\tau}-z_{B_\tau}-z_i,\infty)$ if there is no pair $(A_{\tau+1},B_{\tau+1}))$, and consider the vector 
\[
 z^{(2)} =(z_1+2^{i-1}\delta, z_2+2^{i-2}\delta, \dots, z_i+\delta, z_{i+1}, z_{i+2}, \dots z_n).
\]
This is again sum-distinct and there is a unique pair $(L,R)=(B_\tau\cup\{i\},A_\tau)$ such that
$L <_z R$ but $R <_{z^{(2)}} L$, so $|A(z^{(2)})| = |A(z)| = |A(x)|$. It also still has the property
that  $z^{(2)}_{j} > z^{(2)}_{j+1}+\dotsb+z^{(2)}_{n}$ for $j<i$. It may not yet have the property
that $z_{i}^{(2)} > z_{i+1}^{(2)}+\dotsb+z_{n}^{(2)}$, but we do have
$z^{(2)}_i>z_{A_\tau}-z_{B_\tau}$, and we can choose $\delta^{(2)}$ in the
interval $(z_{A_{\tau+1}}-z_{B_{\tau+1}}-z^{(2)}_i, z_{A_{\tau+2}}-z_{B_{\tau+2}}-z^{(2)}_i)$ 
and repeat to get $z^{(3)}$, $z^{(4)}$, $\dots$. The process terminates at
some $k$ when $z^{(k)}_i > z^{(k)}_{i+1}+\dotsb+z^{(k)}_{n}$, and we take this to be our new~$x$. 
Repeating this process for $i = 1,2,\dots,n-1$ in turn, gives a rapidly decreasing vector
$y$ with $|A(y)| = |A(x)|$, as required.

We still need to prove the claim for $x$ which are not sum-distinct. As before let 
\[
 \eps=\min_{x_S\ne x_{S'}}|x_S-x_{S'}|=\min_{x_S\ne x_{S'}}|x_{S\setminus S'}-x_{S'\setminus S}|.
\]
If each entry is perturbed by less than $\eps/n$ to get a vector $z$, then 
\[
 \sum_{i=1}^k s_ix_{\sigma(i)}\ge 0 \iff \sum_{i=1}^k s_iz_{\sigma(i)} > -\eps
 \impliedby \sum_{i=1}^k s_iz_{\sigma(i)} \ge 0.
\]
Therefore, $A(x) \supseteq A(z)$. If we get $z$ by by adding a small $\Unif[0,\eps/n]$
random variable to each entry of~$x$, then $z$ is almost surely sum-distinct and the
result follows since $|A(x)| \ge |A(z)| = (2n - 1)!!$ almost surely.

We now return to the proof of Claim~\ref{claim:small-change}.
\begin{proof}[Proof of Claim \ref{claim:small-change}]
Let $k = |L\cup R|$, and consider the function $f\colon S_n\times\{-1,1\}^n \to S_n\times\{-1,1\}^n$
which acts as follows. For any $(\sigma,s) \in S_n\times\{-1,1\}^n$, the function $f$ maps
$(\sigma,s)$ to $(\sigma',s')$ where $\sigma'$ is the permutation which is reversed on the first $k$ inputs,
and $s'$ is formed by reversing the order of the first $k$ entries in $s$ and negating them. 
That is, $\sigma'(i)=\sigma(k+1-i)$ for $i\in [k]$ and $\sigma'(i)=\sigma(i)$ for $i\in [k+1,n]$,
and $s_i'=-s_{k+1-i}$ for $i\in [k]$ and $s_i'=s_i$ for $i\in [k+1,n]$.

The function $f$ is clearly self-inverse and hence bijective, and we will show that
$f(A(x)\setminus A(x')) \subseteq A(x')\setminus A(x)$. By switching the roles of $x$ and $x'$ and of $L$ and $R$, it follows that $f(A(x')\setminus A(x))\subseteq A(x)\setminus A(x')$, and so $|A(x)| = |A(x')|$.

Suppose that $(\sigma,s) \in A(x)\setminus A(x')$ and let $f(\sigma,s) = (\sigma',s')$.
It is obvious that $(\sigma',s')\notin A(x)$ as 
\[
\sum_{i=1}^k s_i'x_{\sigma'(i)}=-\sum_{i=1}^k s_ix_{\sigma(i)}<0.
\]

Since $(\sigma,s) \notin A(x')$, there must be at least one $\ell$ for which $\sum_{i=1}^\ell s_ix_{\sigma(i)}'<0$.
We first show that there is exactly one choice for~$\ell$, and that it is~$k$.
Let $S_{\pm}=\{\sigma(i) : i\in [\ell],\,s_i=\pm 1\}$. Then
\begin{align*}
 \sum_{i=1}^\ell s_ix_{\sigma(i)} = x_{S_+}-x_{S_-} > 0,\\
 \sum_{i=1}^\ell s_ix'_{\sigma(i)} = x'_{S_+}-x'_{S_-} < 0.
\end{align*}
In other words, $S_- <_x S_+$ and $S_+ <_{x'} S_-$. Since $S_+\cap S_-=\emptyset$,
we find that $(S_-,S_+)=(L,R)$ and the only option for $\ell$ is~$k$.

Using this we can check that $(\sigma',s') \in A(x')$. For $j\in[k]$, we have
\begin{align*}
 \sum_{i=1}^j s'_i x'_{\sigma'(i)}
 &=-\sum_{i=k-j+1}^{k} s_i x'_{\sigma(i)}\\
 &=-\sum_{i=1}^k s_i x'_{\sigma(i)}+\sum_{i=1}^{k-j} s_ix'_{\sigma(i)}.
\end{align*}
Both of these terms are non-negative since $\sum_{i=1}^{\ell} s_ix'_{\sigma(i)}< 0$ if and only
if $\ell=k$. Similarly, for $j>k$, we have
\begin{align*}
 \sum_{i=1}^j s'_i x'_{\sigma'(i)}
 &=\sum_{i=1}^k s'_i x'_{\sigma'(i)}-\sum_{i=1}^k s_i x'_{\sigma(i)}+\sum_{i=1}^j s_i x'_{\sigma(i)}\\
 &=-2\sum_{i=1}^k s_ix'_{\sigma(i)}+\sum_{i=1}^js_ix'_{\sigma(i)}\ge 0.
\end{align*}
Hence, $(\sigma',s') \in A(x')\setminus A(x)$.
\end{proof}

\section{Proof of Lemma~\ref{lem:locallimitlazy}}\label{app:locallimitlazy}
\locallimitlazy*
The proof is an adaptation of the proof of Proposition~1 in~\cite{AurzadaDereichLifshits2014}.

Let $a,b\in\Z$. We will use Fourier inversion to estimate $\Prb(\lw_n=a,\lwa_n=b)$. We let $\lmgf_n\colon\R^2\to \mathbb{C}$ given by
\[
 (t_1,t_2)\mapsto \E\left[e^{i(t_1\lw_n+t_2\lwa_n)}\right]
\]
be the characteristic function of $(\lw_n,\lwa_n)$, so that by $2$-dimensional Fourier inversion, we have
\begin{equation}
 \Prb\left(\lw_n=a,\,\lwa_n=b\right)
 =\frac{1}{(2\pi)^2}\int_{-\pi}^\pi\!\int_{-\pi}^\pi \lmgf_{n}(t_1,t_2)e^{-i(t_1a+t_2b)}dt_1dt_2.
 \label{eq:fourierinversion}
\end{equation}
We observe that, for $\lws_1,\lws_2,\dots$ i.i.d.\ random variables distributed as the steps
of $\lw$ (i.e. $\Prb(\lws_i=0)=\tfrac{1}{2}$, $\Prb(\lws_i=-1)=\Prb(\lws_i=1)=\tfrac{1}{4}$) we have
\[
 \lws_1\overset{d}{=}\tfrac{1}{2}(\uws_1+\uws_2)
\]
for $\uws_1$ and $\uws_2$ two i.i.d.\ Bernoulli random variables with
$\Prb(\uws_1=1)=\Prb(\uws_2=-1)=\tfrac{1}{2}$. Therefore, the characteristic function of $\lws_1$ satisfies 
\[
 \E\left[e^{it\lws_1}\right]
 =\E\left[e^{it\uws_1/2}\right]^2
 =\cos^2\left(\tfrac{t}{2}\right).
\]
Moreover, note that
\[\textstyle
 (\lw_n,\lwa_n)\overset{d}{=}\left(\sum_{k=1}^n \lws_k,\sum_{k=1}^n (n-k+1)\lws_k\right)
 \overset{d}{=} \left(\sum_{k=1}^n \lws_k,\sum_{k=1}^n k\lws_k\right),
\]
so 
\[
 \lmgf_n(t_1,t_2)=\prod_{k=1}^n\E\left[e^{i(t_1+kt_2)\lws_k}\right]
 =\prod_{k=1}^n\cos^2\left(\tfrac{t_1+kt_2}{2}\right).
\]

In particular, we observe that for $n\ge 2$, the absolute value of the integrand in \eqref{eq:fourierinversion}
is equal to $1$ if and only if $t=(t_1,t_2)=(0,0)$ and is strictly smaller otherwise. We will examine
the contribution to the integral of $t$ in a small region around $(0,0)$, and we will show
that the contribution is negligible outside of that region. 

Define $T_1=\{(t_1,t_2)\in[-\pi,\pi]^2:|t_1|+n|t_2|\ge\pi\}$. Then, for $n\ge 2$, it
is not too hard to see that there exists a $c<1$ such that $|\cos(\frac{t_1+kt_2}{2})|\le c$
for at least half of the values of $k=1,\dots,n$. Hence, on $T_1$,
\[
 |\lmgf_n(t_1,t_2)|\le c^n=\exp(-\Omega(n)).
\]
Now define $T_2=\{(t_1,t_2):n^{-1/3}\le |t_1|+n|t_2|<\pi\}$. 
We observe that we have the bound $\cos^2x\le e^{-x^2}$ for $|x|\le\frac{\pi}{2}$ and
\begin{equation}\label{e:sumsq}
 \sum_{k=1}^n(t_1+kt_2)^2=nt_1^2+n(n+1)t_1t_2+\tfrac16n(n+1)(2n+1)t_2^2.
\end{equation}
Hence, on $T_2$, we have
$\sum_{k=1}^n(t_1+kt_2)^2=n(t_1+(n+1)t_2/2)^2+n(n^2-1)t_2^2/12=\Omega(n^{1/3})$ as
$\max\{|nt_2|,|t_1+(n+1)t_2/2|\}=\Omega(n^{-1/3})$. Thus
\[
 |\lmgf_n(t_1,t_2)|\le\exp\Big(-\tfrac14\sum_{k=1}^n(t_1+kt_2)^2\Big)=\exp(-\Omega(n^{1/3})).
\]
We deduce that
\[
 \frac{n^2}{(2\pi)^2}\iint_{T_1\cup T_2}\lmgf(t_1,t_2)e^{-i(t_1a+t_2b)}dt_1dt_2=o(1)
\]
as $n\to\infty$, uniformly in $a$ and $b$.

Finally consider $T_3=\{t:|t_1|+n|t_2|<n^{-1/3}\}$.
Now for small $x$, $\cos^2x=\exp(-x^2+O(x^4))$, and so \eqref{e:sumsq} implies that on $T_3$ we have
\[
 \lmgf_n(t_1,t_2)=\exp\left(-\tfrac{1}{4}\left(nt_1^2+n^2t_1t_2+\tfrac{n^3}{3}t_2^2+O(n^{-1/3})\right)\right).
\]
Writing $t_1=n^{-1/2}s_1$, $t_2=n^{-3/2}s_2$, $v_1=n^{-1/2}a$ and $v_2=n^{-3/2}b$, we have 
\begin{align*}
 \frac{n^2}{(2\pi)^2}&\iint_{T_3}\lmgf(t_1,t_2)e^{-i(t_1a+t_2b)}dt_1dt_2\\
 &=\frac{n^2}{(2\pi)^2}\iint_{T_3}
  \exp\left(-\tfrac{1}{4}\left(nt_1^2+n^2t_1t_2+\tfrac{n^3}{3}t_2^2+O(n^{-1/3})\right)\right)e^{-i(t_1a+t_2b)}dt_1dt_2\\
 &=\frac{1}{(2\pi)^2}\iint_{|s_1|+|s_2|\le n^{1/6}}\!\!
  \exp\left(-\tfrac{1}{4}\left(s_1^2+s_1s_2+\tfrac{1}{3}s_2^2+O(n^{-1/3})\right)\right)e^{-i(s_1v_1+s_2v_2)}ds_1ds_2\\
 &\to \frac{1}{(2\pi)^2}\iint_{\R^2}
  \exp\left(-\tfrac{1}{4}\left(s_1^2+s_1s_2+\tfrac{1}{3}s_2^2\right)\right)e^{-i(s_1v_1+s_2v_2)}ds_1ds_2
\end{align*}
as $n\to\infty$ uniformly over all $v_1$ and $v_2$. Now
\begin{multline*}
 \frac{1}{(2\pi)^2}\iint\exp\left(-\tfrac{1}{4}\left(s_1^2+s_1s_2+\tfrac{1}{3}s_2^2\right)\right)
  e^{-i(s_1v_1+s_2v_2)}ds_1ds_2\\
 =\frac{1}{(2\pi)^2}\iint\exp\left(-\tfrac{1}{2}s^TR^{-1}s\right)e^{-i s^Tv}ds_1ds_2
 =\frac{\sqrt{\det R}}{2\pi}\exp\left(-\tfrac{1}{2}v^TRv\right),
\end{multline*}   
where 
\[
 s=\begin{pmatrix}s_1\\s_2\end{pmatrix},\quad
 v=\begin{pmatrix}v_1\\v_2\end{pmatrix},\quad
 R^{-1}=\frac{1}{2}\begin{pmatrix}1&\tfrac{1}{2}\\\tfrac{1}{2}&\tfrac{1}{3}\end{pmatrix},\quad
 R=\begin{pmatrix}8&-12\\-12&24\end{pmatrix}, \quad \det R=48.
\]
Therefore, we deduce that
\[
 n^2\Prb\left(\lw_n=a,\lwa_n=b\right)
 \to\frac{\sqrt{\det R}}{2\pi}\exp\left(-\tfrac{1}{2}v^TRv)\right)\\
 =\frac{2\sqrt{3}}{\pi}\exp\left(-4v_1^2+12v_1v_2-12v_2^2\right)
\]
as $n\to\infty$, uniformly over all $(v_1,v_2)$, as required. 

\section{Proof of Lemma \ref{lem:upperboundintegral0}}\label{app:upperboundintegral}
\upperboundintegral*
With Lemma~\ref{lem:locallimitlazy} in hand, the proof of Lemma \ref{lem:upperboundintegral0} is a direct adaptation of the proof of the
upper bound of Theorem~1 of \cite{AurzadaDereichLifshits2014} for simple symmetric random walks.
We include the proof for our case for completeness. Let $\adjlw=\adjlw(\lw,n)$ be the
adjoint process of $\lw$ on $[n]$, i.e. for $i\in [n]$, we set $\adjlw_i:=\lw_n-\lw_{n-i}$
and let $\adjlwa$ be the area process of~$\adjlw$. 

Denote $\lw_k=\sum_{i=1}^k\lws_i$ for all $k$, set $b=\lfloor n/4 \rfloor$, and define the events
\begin{align*}
 \Omega^+_n&=\{\lwa_1,\dots,\lwa_b\ge 0 \}\in \sigma(\lws_1,\dots,\lws_b), \\
 \bar{\Omega}^+_n&=\{\adjlwa_1,\dots,\adjlwa_b\ge 0\}\in \sigma(\lws_{n-b+1},\dots,\lws_n).
\end{align*}
Observe that $\adjlw$ has the same law as~$\lw$, so $\Omega^+_n$ and $\bar{\Omega}^+_n$ are
independent and have equal probability. Moreover, by Theorem 1 and 2 in~\cite{Vysotsky2010},
$\Prb(\Omega^+_n)=\Theta(n^{-1/4})$. 

Furthermore, on the event $\{\lw_n=\lwa_n=0\}$, we see that $\adjlwa_k=\lwa_{n-k}$ for any $k\in [n]$, so 
\begin{align*}
 \Prb\left(\lw_n=\lwa_n=0,\,\lwa_1,\dots,\lwa_n\ge 0\right)
 &\le \Prb\left(\Omega^+_n\cap \bar\Omega^+_n\cap \{\lw_n=\lwa_n=0\}\right)\\
 &=\Prb(\Omega^+_n)^2\Prb\left(\lw_n=\lwa_n=0 \mid \Omega^+_n\cap\bar\Omega^+_n\right)\\
 &=\Theta(n^{-1/2})\Prb\left(\lw_n=\lwa_n=0 \mid \Omega^+_n\cap\bar\Omega^+_n\right).
\end{align*}

Now, we observe that $\Omega^+_n\cap\bar\Omega^+_n$ only depends on $\lws_1,\dots,\lws_b$
and $\lws_{n-b+1},\dots,\lws_n$, so 
\begin{multline*}
 \Prb\big(\lw_n=\lwa_n=0 \mid \Omega^+_n\cap\bar\Omega^+_n\big)\\
 \le\sup_{(\ell_i)}
 \Prb\big(\lw_n=\lwa_n=0 \mid \lws_i=\ell_i\text{ for }i\in\{1,\dots,b\}\cup\{n-b+1,\dots,n\}\big),
\end{multline*}
where the supremum is over all choices of $\ell_1,\dots,\ell_b,\ell_{n-b+1},\dots,\ell_n\in\{-1,0,1\}$.
We see that given the values of $\lws_1,\dots,\lws_b$ and $\lws_{n-b+1},\dots,\lws_n$, the processes
$(\lw,\lwa)$ restricted to $b+1,\dots,n-b$ have the same distribution as a lazy random walk and its
integrated counterpart with both processes started at some different point, which implies that
\[
 \Prb\big(\lw_n=\lwa_n=0 \mid \Omega^+_n\cap\bar\Omega^+_n\big)
 \le \sup_{y,a}\Prb\big(\lw_{n-2b}=y,\,\lwa_{n-2b}=a\big),
\]
which is $\Theta(n^{-2})$ by Lemma~\ref{lem:locallimitlazy}. The result now follows.

\end{document}